\documentclass[11pt]{article}
\usepackage{mathrsfs}
\usepackage{amsfonts}
\usepackage{}
\usepackage{amsmath,amssymb,amsthm,latexsym,amstext}
\usepackage[mathscr]{eucal}
\usepackage{rotate,graphics,epsfig,epstopdf}
\usepackage{float}
\usepackage{color}
\usepackage{subfigure}
\usepackage{indentfirst}
\usepackage{bm}

\textwidth 16.5cm \textheight 22.5cm \oddsidemargin 0.0cm

\newcommand{\be}{\begin{eqnarray}}
\newcommand{\ee}{\end{eqnarray}}
\newcommand{\by}{\begin{eqnarray*}}
\newcommand{\ey}{\end{eqnarray*}}
\newcommand{\bn}{\begin{enumerate}}
\newcommand{\en}{\end{enumerate}}
\newcommand{\ei}{\end{itemize}}

\newtheorem{theorem}{Theorem}[section]
\newtheorem{lemma}[theorem]{Lemma}

\newtheorem{remark}[theorem]{Remark}

\renewcommand{\theequation}{\arabic{section}.\arabic{equation}}

\numberwithin{equation}{section}

\begin{document}
\date{}
\title{\bf The small mass limit for a McKean-Vlasov equation with state-dependent friction  \footnote{This work was supported by
the Natural Science Foundation of Jiangsu Province, BK20230899 and
the National Natural Science Foundation of China, 11771207.
}}
\author{ Chungang Shi$^{1}$\footnote{Corresponding author, shichungang@njust.edu.cn } 
\hskip1cm Mengmeng Wang$^{2}$\footnote{wmm3540@gmail.com}
\hskip1cm Yan Lv$^{3}$\footnote{lvyan@njust.edu.cn}
\hskip1cm Wei Wang$^{4}$\footnote{wangweinju@nju.edu.cn} \\
\texttt{{\scriptsize $^{1,3}$School of Mathematics and Statistics, Nanjing University of Science and Technology,
Nanjing, 210094, P. R. China}}\\
\texttt{{\scriptsize $^{2,4}$Department of Mathematics, Nanjing University,
Nanjing, 210023, P. R. China}}}\maketitle
\begin{abstract}
The small mass limit is derived for a McKean-Vlasov equation with state-dependent friction in $d$-dimensional space. By applying the averaging approach to a non-autonomous slow-fast system with the microscopic and macroscopic scales,  the convergence in distribution is obtained.
\end{abstract}

\textbf{Key Words:} The small mass limit;  Smoluchowski-Kramers approximation; State-dependent friction; McKean-Vlasov equation; Averaging principle.


\section{Introduction}\label{sec:intro}
  \setcounter{equation}{0}
  \renewcommand{\theequation}
{1.\arabic{equation}}
In the presence of interaction and confinement potentials, the evolution of the classical Newton dynamics for indistinguishable $N$-point particle system with mass $\epsilon$ in $\mathbb{R}^{d}$ is given by:
\begin{eqnarray}\label{Intro:1.1}
\epsilon\ddot{x}_{i}+\gamma\dot{x}_{i}=-\nabla V(x_{i})-\frac{1}{N}\sum_{j=1}^{N}\nabla K(x_{i}-x_{j}), \quad i=1,2,\cdots,N,
\end{eqnarray}
where $x_{i}, \dot{x}_{i}\in\mathbb{R}^{d}$ are the position and velocity of the $i$-th particle respectively. The $\gamma>0$ describes the strength of linear damping in velocity and functions $V$ and $K:\mathbb{R}^{d}\to\mathbb{R}$ are the confinement potential and interaction potential respectively. As the number of particles $N$ tends to infinity, the microscopic descriptions of (\ref{Intro:1.1}) is approximately given by a McKean-Vlasov equation with the interaction potential replaced by a single averaged interaction~\cite{G},\cite{S},\cite{M},\cite{D}. 
In the real world, many systems may be disturbed by noise and damping may also depend on the state, in this case, the $N$-particle system (\ref{Intro:1.1}) is described by
\begin{eqnarray}\label{equ:main}
&&\epsilon\ddot{x}_{i}^{\epsilon}+\gamma(x_{i}^{\epsilon})\dot{x}_{i}^{\epsilon}=-\nabla V(x_{i}^{\epsilon})-\frac{1}{N}\sum_{j=1}^{N}\nabla K(x_{i}^{\epsilon}-x_{j}^{\epsilon})-\frac{1}{N}\sum_{j=1}^{N}\phi(x_{i}^{\epsilon}-x_{j}^{\epsilon})\dot{x}_{i}^{\epsilon}\nonumber\\
&&x_{i}^{\epsilon}(0)=x_{i,0},\quad \dot{x}_{i}^{\epsilon}(0)=v_{i,0},\quad\quad\quad\quad\quad\quad\quad\quad\quad +\sigma(x_{i}^{\epsilon})\dot{B}_{i}(t),
\end{eqnarray}
here $i=1,2,\cdots,N$, $\{B_{i}\}_{i=1}^{N}$ are independent Brownian motions. $\phi: \mathbb{R}^{d}\to \mathbb{R}^{d}\times\mathbb{R}^{d}$ denotes the nonlocal interaction~\cite{CS, HL, HT}. 
$\gamma: \mathbb{R}^{d}\to \mathbb{R}^{d}\times\mathbb{R}^{d}$ is the state-dependent friction. There are rich works on the stochastic $N$ particle system~\cite{CC},\cite{WLW},\cite{BC}. It is particularly worth mentioning that Wang et al.~\cite{WLW} considered a stochastic $N$ particle system with constant communication weight function and the small mass limit is derived by applying the averaging approach to distribution dependent slow-fast stochastic differential equations. Carrillo and Choi~\cite{CC} considered the more general $N$ particle system without stochastic perturbation forces and made use of a discrete version of a modulated kinetic energy together with the bounded Lipschitz distance for measures to derive the mean field limit. 
In this paper, we consider the following McKean-Vlasov stochastic differential equation
\begin{eqnarray}\label{equ:main1}
&&\epsilon\ddot{\bar{x}}_{i}^{\epsilon}+(\gamma(\bar{x}_{i}^{\epsilon})+\phi*\bar{\rho}_{t}^{\epsilon}(\bar{x}_{i}^{\epsilon}))\dot{\bar{x}}_{i}^{\epsilon}=-\nabla V(\bar{x}_{i}^{\epsilon})-\nabla K*\bar{\rho}_{t}^{\epsilon}(\bar{x}_{i}^{\epsilon})+\sigma(\bar{x}_{i}^{\epsilon})\dot{B}_{i}(t),\nonumber\\
&& \bar{x}_{i}^{\epsilon}(0)=\bar{x}_{i,0},\quad \dot{\bar{x}}_{i}^{\epsilon}(0)=\bar{v}_{i,0}.
\end{eqnarray}
where $\bar{\rho}_{t}^{\epsilon}$ is the distribution of $\bar{x}_{i}^{\epsilon}(t)$ on $\mathbb{R}^{d}$, $\nabla K*\bar{\rho}_{t}^{\epsilon}(x)=\int_{\mathbb{R}^{d}}\nabla K(x-y)\bar{\rho}_{t}^{\epsilon}(y)dy$ and $\phi*\bar{\rho}_{t}^{\epsilon}(x)=\int_{\mathbb{R}^{d}}\phi(x-y)\bar{\rho}_{t}^{\epsilon}(y)dy$. At the formal level, (\ref{equ:main1}) is the corresponding McKean-Vlasov stochastic differential equation of the stochastic $N$-particle systems (\ref{equ:main}) as $N\to\infty$~\cite{S, Se}.

Here we are concerned with the limit as $\epsilon\to0$ on both sides of (\ref{equ:main1}). For a single particle, if the particle has a very small mass, then the small mass limit is desirable. Especially, the limit is called the Smoluchowski-Kramers approximation with $K=0$ and $\phi=0$~\cite{M1, SMD, HMVW}. These works all consider the small mass limit on the microscopic scale. In recent work~\cite{WLW}, Wang et al. derived a small mass limit for a stochastic $N$-particle system on the macroscopic scale by applying the averaging approach. However, they only considered the case of constant friction coefficients. In the paper, we consider a small mass limit with a state-dependent friction coefficient on the macroscopic scale by introducing some macroscopic quantities, which may be regarded as a generalization of ~\cite{HMVW} to $N$-particle system. We mainly study the macroscopic scale limit of the Vlasov-Fokker-Planck equation corresponding to (\ref{equ:main1}) so as to obtain the small mass limit of the equation (\ref{equ:main1}) in the sense of distribution.

There are also some works on the macroscopic limit of Vlasov type equation. For example, Jabin~\cite{Ja} investigated the limit of some kinetic equation as the mass of the particles towards to zero. Karper~\cite{Ka} studied the hydrodynamic limit of a kinetic Cucker-Smale flocking model by means of a relative entropy method and the resulting asymptotic dynamics is an Euler-type flocking system. Huang~\cite{Hu} studied a kinetic Vlasov-Fokker-Planck equation and established a quantified estimate of the overdamped limit between the original equation and the aggregation-diffusion equation by adopting a probabilistic approach. For more results, we refer to~\cite{FS, FST, CT}.

The rest of the paper is organized as follows. Some essential preliminaries and main results are given in section \ref{Pre}. The proof of the small mass limit is presented in the last section.

\section{Preliminary and main result}\label{Pre}
  \setcounter{equation}{0}
  \renewcommand{\theequation}
{2.\arabic{equation}}
Let $(\Omega,\mathcal{F},\mathbb{P})$ be a complete probability space. $\mathbb{E}$ denotes expectation with respect to $\mathbb{P}$ and $\mathbb{E}^{j}$ denotes the expectation with respect to the distribution of $\bar{x}_{j}^{\epsilon}(t)$.
Assuming that $\|\cdot\|$ and $\langle\cdot,\cdot\rangle$ denotes the norm and product on $\mathbb{R}^{d}$. Let $A^{*}$ denotes the transpose of the matrix $A$.

Let $\mathcal{P}_{2}$ denote the sets consisting of Borel probability measure on $\mathbb{R}^{d}$ with 
\begin{equation*}
\int_{\mathbb{R}^{d}}\|x\|^{2}\mu(dx)<\infty,
\end{equation*}
for each $\mu\in\mathcal{P}_{2}$. For any $\mu$ and $\nu$ in $\mathcal{P}_{2}$, define the following Monge-Kantorovich (or Wasserstein) distance:
\begin{equation*}
dist_{MK,2}(\mu,\nu)=\inf_{\pi\in\Pi(\mu,\nu)}\Big[\int\int_{\mathbb{R}^{d}\times\mathbb{R}^{d}}\|x-y\|^{2}\pi(dx,dy)\Big]^{\frac{1}{2}},
\end{equation*}
where $\Pi(\mu,\nu)$ denotes the set of Borel probability measures $\Pi$ on $\mathbb{R}^{d}\times\mathbb{R}^{d}$ with the first and second marginals $\mu$ and $\nu$. Equivalently, for $\mu, \nu\in \mathcal{P}_{2}$,
\begin{equation}\label{equ:2.101}
dist_{MK,2}(\mu,\nu)=\inf_{(X,\bar{X})}[\mathbb{E}\|X-\bar{X}\|^{2}]^{\frac{1}{2}}
\end{equation}
with random variables $X$ and $\bar{X}$ in $\mathbb{R}^{d}$ having laws $\mu$ and $\nu$ respectively.

Next we give some assumptions to system (\ref{equ:main1}).

$(\mathbf{H_{1}})$. $\nabla V(\cdot), \nabla K(\cdot): \mathbb{R}^{d}\to\mathbb{R}^{d}$ are Lipschitz continuous with Lipschitz constant $L_{V}$ and $L_{K}$ respectively.

$(\mathbf{H_{2}})$. $\sigma(\cdot): \mathbb{R}^{d}\to\mathbb{R}^{d}\times\mathbb{R}^{d}$ is continuous differential and its derivative is bounded by $L_{\sigma}$.

$(\mathbf{H_{3}})$. $\gamma(\cdot): \mathbb{R}^{d}\to\mathbb{R}^{d}\times\mathbb{R}^{d}$ is continuous differential function with bounded derivative and its symmetric part $\frac{1}{2}(\gamma+\gamma*)$ has the smallest eigenvalue $\lambda_{1}(x)$ that is positive uniformly with respect to $x$,
\begin{equation*}
\lambda_{1}(x)\geq\lambda_{\gamma}>0.
\end{equation*} 

$(\mathbf{H_{4}})$. $\phi(\cdot): \mathbb{R}^{d}\to\mathbb{R}^{d}\times\mathbb{R}^{d}$ satisfies $(\mathbf{H_{3}})$, where $\lambda_{1}(\cdot), \lambda_{\gamma}$ are substituted with $\alpha_{1}(\cdot), \lambda_{\phi}$ respectively. Furthermore, there is a constant $L_{\phi}>0$ such that $\|\phi(\cdot)\|\leq L_{\phi}$.

The following lemma may be used to derive the solution of matrix equation in differential form.
\begin{lemma}{\rm\cite[Theorem 2]{BBH}}\label{YT}
Let $I\subset\mathbb{R}$ an open interval with $t_{0}\in I$, $A\in \mathbb{C}^{n\times n}, B\in \mathbb{C}^{m\times m}, C\in \mathcal{C}(I,\mathbb{C}^{n\times n})$ and $D\in \mathbb{C}^{m\times n}$. The differential Sylvester equation
\begin{eqnarray*}
\frac{d}{dt}X(t)=AX(t)+X(t)B+C(t),\quad X(t_{0})=D,
\end{eqnarray*}
has the unique solution
\begin{eqnarray*}
X(t)=e^{A(t-t_{0})}De^{B(t-t_{0})}+\int_{t_{0}}^{t}e^{A(t-s)}C(s)e^{B(t-s)}ds.
\end{eqnarray*}
\end{lemma}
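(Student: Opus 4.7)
The plan is a direct verification followed by a standard uniqueness argument. Because the differential Sylvester equation is linear in $X$, once a candidate formula is shown to satisfy both the ODE and the initial condition, uniqueness reduces to the homogeneous case $Y'=AY+YB$ with $Y(t_{0})=0$.

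For existence, I would split the candidate as $X(t)=X_{h}(t)+X_{p}(t)$ with $X_{h}(t):=e^{A(t-t_{0})}De^{B(t-t_{0})}$ and $X_{p}(t):=\int_{t_{0}}^{t}e^{A(t-s)}C(s)e^{B(t-s)}\,ds$. Since $A$ commutes with $e^{A(t-t_{0})}$ and $B$ with $e^{B(t-t_{0})}$, the product rule gives $X_{h}'(t)=AX_{h}(t)+X_{h}(t)B$. Applying the Leibniz rule to $X_{p}$ in the variable $t$ produces a boundary contribution $\bigl[e^{A(t-s)}C(s)e^{B(t-s)}\bigr]_{s=t}=C(t)$ together with an interior term equal to $AX_{p}(t)+X_{p}(t)B$. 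Summing, one obtains $X'(t)=AX(t)+X(t)B+C(t)$, and clearly $X(t_{0})=e^{0}De^{0}+0=D$.

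For uniqueness, suppose $X_{1},X_{2}$ are two solutions and set $Y:=X_{1}-X_{2}$, so that $Y'=AY+YB$ with $Y(t_{0})=0$. Introduce the conjugated variable $Z(t):=e^{-A(t-t_{0})}Y(t)e^{-B(t-t_{0})}$. Differentiating and again using the commutativity of $A$ (resp.\ $B$) with its own exponential, one obtains
\[
Z'(t)=e^{-A(t-t_{0})}\bigl(-AY(t)+Y'(t)-Y(t)B\bigr)e^{-B(t-t_{0})}=e^{-A(t-t_{0})}\bigl(-AY+AY+YB-YB\bigr)e^{-B(t-t_{0})}=0,
\]
so $Z\equiv Z(t_{0})=0$ on $I$, which forces $Y\equiv 0$ and hence $X_{1}\equiv X_{2}$.

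There is no substantive obstacle here; the statement is a classical matrix ODE result. The only points requiring attention are respecting matrix non-commutativity (every $A$ must stay on the left of $X$ and every $B$ on the right), and justifying differentiation under the integral sign, which is immediate from the continuity of $C$ on $I$ together with the joint smoothness of $(t,s)\mapsto e^{A(t-s)}C(s)e^{B(t-s)}$ on compact subintervals of $I$.
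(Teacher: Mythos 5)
Your argument is correct, and it is worth noting that the paper offers no proof of this lemma at all: it is quoted as a known result from the cited reference \cite{BBH}, so your direct verification (product rule on $e^{A(t-t_0)}De^{B(t-t_0)}$, Leibniz rule on the Duhamel integral, and uniqueness via the conjugated variable $Z(t)=e^{-A(t-t_0)}Y(t)e^{-B(t-t_0)}$) is exactly the standard argument the authors are implicitly relying on. The only point you might have flagged is that the dimensions as printed in the statement are inconsistent ($C$ and $D$ must both be $n\times m$ for $AX(t)+X(t)B+C(t)$ to be well defined); this is a typo inherited from the citation and does not affect your proof.
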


\begin{lemma}{\rm\cite[p120]{Ke}}\label{YL}
Let $A=(a_{ij})_{1\leq i,j\leq d}$ and $u=(u_{i})_{1\leq i\leq d}$ be $d\times d$ matrix and $d\times1$ vector respectively. Each element of $A$ and $u$ is a function of $x=(x_{1},x_{2},\dots,x_{d})$, then
\begin{eqnarray*}
\nabla\cdot(Au)=(\nabla\cdot A)u+{\rm Tr}(A~{\rm grad}~u),
\end{eqnarray*}
where ${\rm grad}~u=(\frac{\partial u_{i}}{\partial x_{j}})_{1\leq i,j\leq d}$.
\end{lemma}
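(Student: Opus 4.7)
The plan is to establish this standard vector-calculus identity by a direct component-wise computation that relies only on the Leibniz product rule, so there is no genuine analytic obstacle; the whole content lies in matching the index conventions stated in the lemma.

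First I would write out the $i$-th entry of $Au$ as $(Au)_{i}=\sum_{j=1}^{d}a_{ij}u_{j}$, apply the divergence component by component, and split the derivative via the product rule to obtain
\begin{eqnarray*}
\nabla\cdot(Au)=\sum_{i=1}^{d}\frac{\partial}{\partial x_{i}}\sum_{j=1}^{d}a_{ij}u_{j}=\sum_{i,j=1}^{d}\frac{\partial a_{ij}}{\partial x_{i}}\,u_{j}+\sum_{i,j=1}^{d}a_{ij}\frac{\partial u_{j}}{\partial x_{i}}.
\end{eqnarray*}
It then remains to identify each of the two double sums with one of the terms on the right-hand side of the claim.

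For the first sum, using the column-wise convention $(\nabla\cdot A)_{j}=\sum_{i=1}^{d}\partial a_{ij}/\partial x_{i}$ and swapping the order of summation gives $\sum_{j}(\nabla\cdot A)_{j}u_{j}=(\nabla\cdot A)u$, which is the first term. For the second sum, the lemma's convention $(\mathrm{grad}\,u)_{jk}=\partial u_{j}/\partial x_{k}$ makes the diagonal entries of the matrix product read $(A\,\mathrm{grad}\,u)_{ii}=\sum_{j}a_{ij}\,\partial u_{j}/\partial x_{i}$, so summing over $i$ recovers precisely $\mathrm{Tr}(A\,\mathrm{grad}\,u)$. Adding the two identifications yields the identity.

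The only point that demands attention is ensuring the convention adopted for $\nabla\cdot A$ is chosen compatibly with the convention $(\mathrm{grad}\,u)_{jk}=\partial u_{j}/\partial x_{k}$ fixed in the statement, so that the mixed term $a_{ij}\,\partial_{i}u_{j}$ produced by the product rule is exactly absorbed into the trace; once this choice is fixed the argument is purely algebraic and requires no regularity beyond the $C^{1}$ dependence on $x$ already assumed in the statement.
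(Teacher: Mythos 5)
Your proof is correct: the component-wise product-rule computation, together with the explicit matching of the column-wise convention $(\nabla\cdot A)_{j}=\sum_{i}\partial a_{ij}/\partial x_{i}$ against the stated convention $({\rm grad}\,u)_{jk}=\partial u_{j}/\partial x_{k}$, is exactly what is needed, and you correctly flag that the identity fails under the other (row-wise) divergence convention unless $A$ is symmetric. The paper itself gives no proof of this lemma (it is quoted from the reference \cite{Ke}), and the direct verification you give is the standard argument, so there is nothing further to compare.
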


Then we give the main result of the paper.

\begin{theorem}\label{main2}
Under the assumptions $(\mathbf{H_{1}})$-$(\mathbf{H_{4}})$, for every $T>0$ and any fixed $t_{*}>0$, $\bar{\rho}_{t}^{\epsilon}$ converges weakly to $\bar{\rho}_{t}$ for $t_{*}\leq t\leq T$ as $\epsilon\to0$ with
\begin{eqnarray}
\partial_{t}\bar{\rho}_{t}(x)&=&-\nabla_{x}\cdot[(\gamma(x)+\phi*\bar{\rho}_{t}(x))^{-1}(\nabla V(x)+\nabla K*\bar{\rho}_{t}(x))\bar{\rho}_{t}(x)\nonumber\\
&&+(\gamma(x)+\phi*\bar{\rho}_{t}(x))^{-1}\nabla_{x}\cdot(\bar{\rho}_{t}(x)J(x,\bar{\rho}_{t}(x)))]\nonumber\\
\end{eqnarray}
which is the Vlasov-Fokker-Planck equation corresponding to the following SDE
\begin{eqnarray}\label{main-equ:2.2}
&&dx(t)=\big[(\gamma(x(t))+\tilde{E}\phi(x(t)-\tilde{x}(t)))^{-1}(\nabla V(x(t))+\tilde{E}\nabla K(x(t)-\tilde{x}(t))
\nonumber\\
&&+S(x(t))+(\gamma(x(t))+\tilde{E}\phi(x(t)-\tilde{x}(t)))^{-1}\sigma(x(t))dB_{t},
\end{eqnarray}
where $\tilde{x}(t)$ is a version of $x(t)$ and $\tilde{E}$ is the expectation with respect to the distribution of $\tilde{x}(t)$ and 
\begin{eqnarray*}
S_{i}(x)=\frac{\partial}{\partial x_{k}}((\gamma(x)+\phi*\bar{\rho}_{t}(x))^{-1})_{ij}J_{jk}(x,\bar{\rho}_{t}(x)).
\end{eqnarray*}
Furthermore, $J(x,\bar{\rho}_{t}(x))$ is the solution of the Lyapunov equation
\begin{eqnarray}\label{JES}
J(x,\bar{\rho}_{t}(x))(\gamma(x(t))&+&\tilde{E}\phi(x(t)-\tilde{x}(t)))^{*}\nonumber\\
&+&(\gamma(x(t))+\tilde{E}\phi(x(t)-\tilde{x}(t)))J(x,\bar{\rho}_{t}(x))=\sigma(x)\sigma(x)^{*},\nonumber\\
\end{eqnarray}
and $\bar{x}_{i}^{\epsilon}(t)$ converges  in distribution to $x(t)$ in $C(0,T;\mathbb{R}^{d})$. 
\end{theorem}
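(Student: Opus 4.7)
The plan is to rewrite (1.3) as a non-autonomous slow-fast McKean--Vlasov system and apply an averaging principle to the associated Vlasov--Fokker--Planck (VFP) equation. Setting $v_i^\epsilon=\dot{\bar x}_i^\epsilon$ turns (1.3) into the first-order system
\begin{equation*}
d\bar x_i^\epsilon = v_i^\epsilon\,dt,\qquad
\epsilon\,dv_i^\epsilon = \bigl[-\nabla V(\bar x_i^\epsilon) - \nabla K*\bar\rho_t^\epsilon(\bar x_i^\epsilon) - A(\bar x_i^\epsilon,\bar\rho_t^\epsilon)\,v_i^\epsilon\bigr]\,dt + \sigma(\bar x_i^\epsilon)\,dB_i,
\end{equation*}
where $A(x,\rho):=\gamma(x)+\phi*\rho(x)$, with $x$ slow and $v$ fast on the time scale $\epsilon$. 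First I would derive $\epsilon$-uniform energy and moment bounds for $(\bar x_i^\epsilon,\sqrt{\epsilon}\,v_i^\epsilon)$ using $(\mathbf{H_1})$--$(\mathbf{H_4})$ and the uniform positive-definiteness of $\tfrac12(A+A^*)$ inherited from $(\mathbf{H_3})$--$(\mathbf{H_4})$.

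Next I would freeze the slow variable and the measure. For $(x,\rho)$ fixed, the frozen fast SDE is linear of Ornstein--Uhlenbeck type with unique invariant distribution $\mu^{x,\rho}$, Gaussian with mean $-A(x,\rho)^{-1}(\nabla V(x)+\nabla K*\rho(x))$ and covariance $J(x,\rho)$ solving the Lyapunov equation (2.5). The existence and explicit representation of $J$ come from Lemma 2.1 applied to the matrix ODE $\dot J = -\epsilon^{-1}(AJ+JA^*-\sigma\sigma^*)$, and exponential ergodicity of the frozen dynamics in Wasserstein-$2$ follows from the coercivity of $A$.

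Then I would plug the fast averages into the VFP equation for the joint density $p_t^\epsilon(x,v)$. Integrating $p_t^\epsilon$ and $v\,p_t^\epsilon$ in $v$ and replacing $\int v\,p_t^\epsilon\,dv$ and $\int vv^*\,p_t^\epsilon\,dv$ by the corresponding moments of $\mu^{x,\bar\rho_t}$ yields a closed evolution for the marginal $\bar\rho_t$. Lemma 2.2 is needed to expand $\nabla_x\!\cdot\!\bigl(A^{-1}\nabla_x\!\cdot\!(\bar\rho_t J)\bigr)$ and to separate the contribution of $\nabla_x A^{-1}$, which is precisely the noise-induced drift $S_i$ in the theorem. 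Identification of the resulting limit with the VFP equation associated to the SDE (2.2) is then a direct computation via It\^o's formula on the McKean--Vlasov limit equation.

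The main obstacle is the McKean--Vlasov feedback: the friction coefficient $A$ and the drift $\nabla K*\bar\rho_t^\epsilon$ are themselves functions of the law of the slow variable, so the averaging is applied to a genuinely non-autonomous slow-fast system. One must show simultaneously that, on each $O(1)$ time window, the fast marginal relaxes to $\mu^{x,\bar\rho_t}$ and that $\bar\rho_t^\epsilon\to\bar\rho_t$, closing the loop via a Gronwall--Wasserstein argument that also delivers uniqueness of the limit VFP. The restriction $t\ge t_*>0$ is the initial-layer correction, accommodating the mismatch between the initial velocity law and the Gaussian $\mu^{\bar x_{i,0},\bar\rho_0}$. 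Finally, tightness of $\{\bar x_i^\epsilon\}$ in $C(0,T;\mathbb{R}^d)$, obtained from the uniform moment bounds and the Kolmogorov--Centsov criterion, combined with the uniqueness of the limit martingale problem for (2.2), upgrades the marginal-in-time weak convergence to convergence in distribution of trajectories.
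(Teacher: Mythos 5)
Your proposal follows essentially the same route as the paper: pass to the first-order slow-fast system, establish $\epsilon$-uniform moment and H\"older bounds, compute the frozen-coefficient velocity moments (whose covariance solves the Lyapunov equation via the explicit Sylvester solution formula), close the Vlasov--Fokker--Planck hierarchy through the local mass and momentum $\bar\rho_t^{\epsilon}$, $Y_t^{\epsilon}=\int v f_t^{\epsilon}\,dv$, and average the fast momentum equation over a Khasminskii-type time discretization before concluding with tightness and Skorokhod. The only cosmetic difference is that you phrase the fast-variable analysis in the language of exponential ergodicity of the frozen Ornstein--Uhlenbeck dynamics, whereas the paper carries out the equivalent computation directly from the explicit solution formulas.
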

\begin{remark}
By~\cite[p179]{BE}, the equation (\ref{JES}) has a unique solution
\begin{eqnarray*}
J(x,\bar{\rho}_{t}(x))=\int_{0}^{\infty}e^{-(\gamma(x)+\phi*\bar{\rho}_{t}(x))s}\sigma(x)\sigma(x)^{*}e^{-(\gamma(x)+\phi*\bar{\rho}_{t}(x))^{*}s}ds.
\end{eqnarray*}
\end{remark}

\section{An averaging result: $\epsilon\to0$}\label{Aver}
  \setcounter{equation}{0}
  \renewcommand{\theequation}
{3.\arabic{equation}}
We need some uniform estimates on solutions $\bar{x}_{i}^{\epsilon}$.
\begin{lemma}\label{SUB}
Under assumptions $(\mathbf{H_{1}})$-$(\mathbf{H_{4}})$, for each $T>0$, there exists $C_{T}>0$ such that 
\begin{eqnarray*}
\mathbb{E}\sup_{0\leq t\leq T}\|\bar{x}_{i}^{\epsilon}(t)\|^{2}
\leq C_{T}(1+\mathbb{E}\|\bar{x}_{i,0}\|^{2}+\mathbb{E}\|\bar{v}_{i,0}\|^{2}),
\end{eqnarray*}
where $C_{T}$ also depends on parameters $\lambda_{\gamma},\lambda_{\phi},L_{\sigma},L_{K},L_{V},\|\nabla K(0)\|$.
\end{lemma}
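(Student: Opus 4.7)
The plan is to combine a velocity energy estimate with an integration-by-parts representation for the position, exploiting the uniform invertibility of the friction matrix. Writing (\ref{equ:main1}) as the first-order system $d\bar x_i^\epsilon=\bar v_i^\epsilon\,dt$, $\epsilon\,d\bar v_i^\epsilon=-A(\bar x_i^\epsilon,\bar\rho_t^\epsilon)\bar v_i^\epsilon\,dt-h(\bar x_i^\epsilon,\bar\rho_t^\epsilon)\,dt+\sigma(\bar x_i^\epsilon)\,dB_i$ with $A(x,\mu):=\gamma(x)+\phi*\mu(x)$ and $h(x,\mu):=\nabla V(x)+\nabla K*\mu(x)$, assumptions $(\mathbf{H_3})$--$(\mathbf{H_4})$ give the uniform coercivity $\langle z,A(x,\mu)z\rangle\ge(\lambda_\gamma+\lambda_\phi)\|z\|^2$, so $\|A^{-1}\|\le(\lambda_\gamma+\lambda_\phi)^{-1}$ uniformly in $(x,\mu)$. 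Together with the Lipschitz and linear-growth bounds furnished by $(\mathbf{H_1})$--$(\mathbf{H_2})$, this is the structural input.

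\textbf{Step 1 (uniform velocity estimate).} Apply It\^o's formula to $\epsilon^2\|\bar v_i^\epsilon(t)\|^2$. The coercivity of $A$ contributes a dissipative term $-2\epsilon(\lambda_\gamma+\lambda_\phi)\|\bar v\|^2\,dt$; Young's inequality handles $-2\epsilon\langle\bar v,h\rangle\,dt$; crucially, the It\^o correction is $\mathrm{Tr}(\sigma\sigma^*)\,dt$, because the singular prefactor $1/\epsilon^2$ arising from $d\bar v=\epsilon^{-1}(\cdots)dt+\epsilon^{-1}\sigma dB_i$ is exactly cancelled by the $\epsilon^2$ scaling. Burkholder--Davis--Gundy on the martingale part then gives the uniform-in-$\epsilon$ bound
\begin{equation*}
\mathbb{E}\sup_{0\le s\le t}\epsilon^2\|\bar v_i^\epsilon(s)\|^2+\epsilon\int_0^t\mathbb{E}\|\bar v_i^\epsilon(s)\|^2\,ds\le C\Bigl(1+\mathbb{E}\|\bar v_{i,0}\|^2+\int_0^t\sup_{u\le s}\mathbb{E}\|\bar x_i^\epsilon(u)\|^2\,ds\Bigr).
\end{equation*}

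\textbf{Step 2 (position representation and Gronwall).} Multiplying the velocity equation by $A^{-1}$ yields $\bar v_i^\epsilon\,dt=-\epsilon A^{-1}d\bar v_i^\epsilon-A^{-1}h\,dt+A^{-1}\sigma\,dB_i$. Since $\bar x_i^\epsilon$ has no Brownian part ($d\bar x=\bar v\,dt$) and $\bar\rho_s^\epsilon$ depends deterministically on $s$, $A^{-1}(\bar x_i^\epsilon(s),\bar\rho_s^\epsilon)$ is of bounded variation in $s$ with no It\^o-bracket contribution; integration by parts then gives
\begin{equation*}
\bar x_i^\epsilon(t)=\bar x_{i,0}-\epsilon\bigl[A^{-1}(t)\bar v_i^\epsilon(t)-A^{-1}(0)\bar v_{i,0}\bigr]+\epsilon\int_0^t\tfrac{d}{ds}A^{-1}(s)\,\bar v_i^\epsilon(s)\,ds-\int_0^t A^{-1}h\,ds+\int_0^t A^{-1}\sigma\,dB_i.
\end{equation*}
The boundary term is dominated by Step 1; the drift $\int A^{-1}h\,ds$ is controlled by the Lipschitz growth of $\nabla V,\nabla K$ and $\|A^{-1}\|\le(\lambda_\gamma+\lambda_\phi)^{-1}$; the stochastic integral is handled by BDG and the linear growth of $\sigma$. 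For the corrector, the chain rule gives $\|\tfrac{d}{ds}A^{-1}(s)\|\le C(\|\bar v_i^\epsilon(s)\|+\mathbb{E}^j\|\bar v_j^\epsilon(s)\|)$ (the $\mathbb{E}^j$-term comes from $\partial_s(\phi*\bar\rho_s^\epsilon)(x)=-\mathbb{E}^j[\nabla\phi(x-\bar x_j^\epsilon(s))\bar v_j^\epsilon(s)]$), and Cauchy--Schwarz reduces this contribution to $\epsilon\int_0^t\mathbb{E}\|\bar v^\epsilon\|^2\,ds$-type quantities, themselves controlled by Step 1. Combining the four pieces produces an inequality of the form $\mathbb{E}\sup_{s\le t}\|\bar x_i^\epsilon(s)\|^2\le K(1+\mathbb{E}\|\bar x_{i,0}\|^2+\mathbb{E}\|\bar v_{i,0}\|^2)+K\int_0^t\mathbb{E}\sup_{u\le s}\|\bar x_i^\epsilon(u)\|^2\,ds$, and Gronwall's inequality closes the argument.

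\textbf{Main obstacle.} The essential difficulty is maintaining uniformity in $\epsilon$: $\mathbb{E}\|\bar v_i^\epsilon\|^2$ itself diverges like $1/\epsilon$, and only the $\epsilon^2$ renormalization in Step 1, balanced against the coercivity of $A$, produces a bounded quantity. The corrector integral in Step 2 is the most delicate point: it involves the time variation of $A^{-1}$ through the evolving McKean--Vlasov measure $\bar\rho_s^\epsilon$, and bounding $\partial_s(\phi*\bar\rho_s^\epsilon)$ (the new feature compared to the constant-friction case of \cite{WLW}) requires exactly the $L^2$-in-time-and-probability moment estimate for $\bar v_j^\epsilon$ supplied by Step 1.
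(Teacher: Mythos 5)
Your route is genuinely different from the paper's. The paper proves this lemma without any velocity moment estimate: it introduces the matrix integrating factor $y_{i}^{\epsilon}$ solving $\dot{y}_{i}^{\epsilon}=-\epsilon^{-1}(\gamma(\bar{x}_{i}^{\epsilon})+\phi*\bar{\rho}_{t}^{\epsilon}(\bar{x}_{i}^{\epsilon}))y_{i}^{\epsilon}$, represents $\bar{x}_{i}^{\epsilon}$ by Duhamel's formula, and absorbs every singular $1/\epsilon$ prefactor through the propagator decay $\|y_{i}^{\epsilon}(t)y_{i}^{\epsilon}(s)^{-1}\|\leq e^{-(\lambda_{\gamma}+\lambda_{\phi})(t-s)/\epsilon}$ together with $\frac{1}{\epsilon}\int_{u}^{t}e^{-(\lambda_{\gamma}+\lambda_{\phi})(s-u)/\epsilon}ds\leq(\lambda_{\gamma}+\lambda_{\phi})^{-1}$, using stochastic Fubini and Burkholder--Davis--Gundy for the noise term, and then Gronwall. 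Your scheme --- an energy estimate for $\epsilon^{2}\|\bar{v}_{i}^{\epsilon}\|^{2}$ combined with integration by parts against $A^{-1}$ --- is the strategy of \cite{HMVW} transplanted to the McKean--Vlasov setting. It is heavier machinery for this particular lemma, but it does produce as a by-product the velocity bound that the paper proves separately in Lemma \ref{EWJU}. Your structural inputs (uniform coercivity of $\gamma+\phi*\mu$, hence $\|A^{-1}\|\leq(\lambda_{\gamma}+\lambda_{\phi})^{-1}$; vanishing of the It\^{o} bracket between $A^{-1}(\bar{x}_{i}^{\epsilon}(s),\bar{\rho}_{s}^{\epsilon})$ and $\bar{v}_{i}^{\epsilon}$ because $\bar{x}_{i}^{\epsilon}$ has no martingale part; the formula $\partial_{s}(\phi*\bar{\rho}_{s}^{\epsilon})(x)=-\mathbb{E}^{j}[\nabla\phi(x-\bar{x}_{j}^{\epsilon}(s))\bar{v}_{j}^{\epsilon}(s)]$) are all correct.

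There is, however, a gap in the treatment of the corrector $\epsilon\int_{0}^{t}(\tfrac{d}{ds}A^{-1})\bar{v}_{i}^{\epsilon}\,ds$. Since $\|\tfrac{d}{ds}A^{-1}\|\leq C(\|\bar{v}_{i}^{\epsilon}(s)\|+\mathbb{E}^{j}\|\bar{v}_{j}^{\epsilon}(s)\|)$, squaring and taking $\mathbb{E}\sup_{t\leq T}$ forces you to control $\epsilon^{2}\mathbb{E}\big[\big(\int_{0}^{T}\|\bar{v}_{i}^{\epsilon}(s)\|^{2}ds\big)^{2}\big]$. Your Step 1 only supplies the first moment $\epsilon\int_{0}^{T}\mathbb{E}\|\bar{v}_{i}^{\epsilon}(s)\|^{2}ds\leq C$, and Jensen's inequality goes the wrong way: $\big(\epsilon\,\mathbb{E}\int\|\bar{v}\|^{2}\big)^{2}\leq C^{2}$ does not bound $\epsilon^{2}\mathbb{E}\big[(\int\|\bar{v}\|^{2})^{2}\big]$. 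So the assertion that Cauchy--Schwarz reduces the corrector to ``$\epsilon\int_{0}^{t}\mathbb{E}\|\bar{v}^{\epsilon}\|^{2}ds$-type quantities controlled by Step 1'' is not correct as written; only the cross term $\epsilon^{2}\mathbb{E}\big[(\int\mathbb{E}^{j}\|\bar{v}_{j}^{\epsilon}\|\,\|\bar{v}_{i}^{\epsilon}\|ds)^{2}\big]$ factors that way. The repair is standard but must be carried out: square the pathwise energy identity of Step 1 before taking expectations and bound the squared stochastic integral by the It\^{o} isometry (or BDG), which yields $\mathbb{E}\big[\big(\epsilon\int_{0}^{T}\|\bar{v}_{i}^{\epsilon}\|^{2}ds\big)^{2}\big]\leq C$ --- exactly the higher-moment estimate used in \cite{HMVW}. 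With that supplementary lemma your argument closes via Gronwall; without it, the corrector term is not actually estimated.
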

\begin{proof}
Rewrite equation (\ref{equ:main1}) as
\begin{eqnarray}\label{EWJV}
d\dot{\bar{x}}_{i}^{\epsilon}+\frac{1}{\epsilon}(\gamma(\bar{x}_{i}^{\epsilon})+\phi*\bar{\rho}_{t}^{\epsilon}(\bar{x}_{i}^{\epsilon}))\dot{\bar{x}}_{i}^{\epsilon}=-\frac{1}{\epsilon}\nabla V(\bar{x}_{i}^{\epsilon})-\frac{1}{\epsilon}\nabla K*\bar{\rho}_{t}^{\epsilon}(\bar{x}_{i}^{\epsilon})+\frac{\sigma(\bar{x}_{i}^{\epsilon})}{\epsilon}dB_{i}(t).
\end{eqnarray}
Introduce process $y_{i}^{\epsilon}(t)$ satisfying
\begin{eqnarray}\label{equ:2.2}
\frac{dy_{i}^{\epsilon}(t)}{dt}=-\frac{1}{\epsilon}(\gamma(\bar{x}_{i}^{\epsilon})+\phi*\bar{\rho}_{t}^{\epsilon}(\bar{x}_{i}^{\epsilon}))y_{i}^{\epsilon}(t), \quad y_{i}^{\epsilon}(0)=I_{d},
\end{eqnarray}
where $I_{d}$ denotes the $d$-dimensional unit vector.
Then
\begin{eqnarray*}\label{equ:2.3}
d(y_{i}^{\epsilon}(t)^{-1}\dot{\bar{x}}_{i}^{\epsilon}(t))&=&-\frac{1}{\epsilon}y_{i}^{\epsilon}(t)^{-1}\nabla V(\bar{x}_{i}^{\epsilon})-\frac{1}{\epsilon}y_{i}^{\epsilon}(t)^{-1}\nabla K*\bar{\rho}_{t}^{\epsilon}(\bar{x}_{i}^{\epsilon})\\
&&+\frac{\sigma(\bar{x}_{i}^{\epsilon})}{\epsilon}y_{i}^{\epsilon}(t)^{-1}dB_{i}(t).
\end{eqnarray*}
Integrating from $0$ to $t$ and multiplying $y_{i}^{\epsilon}(t)$ on both sides yields
\begin{eqnarray}\label{equ:2.4}
\dot{\bar{x}}_{i}^{\epsilon}(t)&=&y_{i}^{\epsilon}(t)\bar{v}_{i,0}-\frac{1}{\epsilon}\int_{0}^{t}y_{i}^{\epsilon}(t)y_{i}^{\epsilon}(s)^{-1}\nabla V(\bar{x}_{i}^{\epsilon}(s))ds\nonumber\\
&&-\frac{1}{\epsilon}\int_{0}^{t}y_{i}^{\epsilon}(t)y_{i}^{\epsilon}(s)^{-1}\nabla K*\bar{\rho}_{s}^{\epsilon}(\bar{x}_{i}^{\epsilon}(s))ds\nonumber\\
&&+\frac{1}{\epsilon}\int_{0}^{t}y_{i}^{\epsilon}(t)y_{i}^{\epsilon}(s)^{-1}\sigma(\bar{x}_{i}^{\epsilon}(s))dB_{i}(s),
\end{eqnarray}
then
\begin{eqnarray}\label{equ:2.5}
\bar{x}_{i}^{\epsilon}(t)&=&\bar{x}_{i,0}+\int_{0}^{t}y_{i}^{\epsilon}(s)\bar{v}_{i,0}ds-\frac{1}{\epsilon}\int_{0}^{t}\int_{0}^{s}y_{i}^{\epsilon}(s)y_{i}^{\epsilon}(u)^{-1}\nabla V(\bar{x}_{i}^{\epsilon}(u))duds\nonumber\\
&&-\frac{1}{\epsilon}\int_{0}^{t}\int_{0}^{s}y_{i}^{\epsilon}(s)y_{i}^{\epsilon}(u)^{-1}\nabla K*\bar{\rho}_{u}^{\epsilon}(\bar{x}_{i}^{\epsilon}(u))duds\nonumber\\
&&+\frac{1}{\epsilon}\int_{0}^{t}\int_{0}^{s}y_{i}^{\epsilon}(s)y_{i}^{\epsilon}(u)^{-1}\sigma(\bar{x}_{i}^{\epsilon}(u))dB_{i}(u)ds\nonumber\\
&\triangleq& \bar{x}_{i,0}+\sum_{j=1}^{3} I_{j}^{\epsilon}(t).
\end{eqnarray}
Let 
\begin{equation*}
\psi^{\epsilon}(t)=y_{i}^{\epsilon}(t)y_{i}^{\epsilon}(s)^{-1}(-\nabla V(\bar{x}_{i}^{\epsilon}(s))-\nabla K*\bar{\rho}_{s}^{\epsilon}(\bar{x}_{i}^{\epsilon}(s))),
\end{equation*}
then 
\begin{equation*}
\psi^{\epsilon}(s)=-\nabla V(\bar{x}_{i}^{\epsilon}(s))-\nabla K*\bar{\rho}_{s}^{\epsilon}(\bar{x}_{i}^{\epsilon}(s)),
\end{equation*}
and by (\ref{equ:2.2}),
\begin{eqnarray*}
\frac{d\psi^{\epsilon}(t)}{dt}&=&\frac{dy_{i}^{\epsilon}(t)}{dt}y_{i}^{\epsilon}(s)^{-1}(-\nabla V(\bar{x}_{i}^{\epsilon}(s))-\nabla K*\bar{\rho}_{s}^{\epsilon}(\bar{x}_{i}^{\epsilon}(s)))\\
&=&-\frac{1}{\epsilon}(\gamma(\bar{x}_{i}^{\epsilon}(t))+\phi*\bar{\rho}_{t}^{\epsilon}(\bar{x}_{i}^{\epsilon}(t)))\psi^{\epsilon}(t).
\end{eqnarray*}
Thus we have~\cite[Lemma 4.2 of Chapter IV]{HA}
\begin{eqnarray*}\label{equ:2.6}
\psi^{\epsilon}(t)&=&\psi^{\epsilon}(s)e^{-\frac{1}{\epsilon}\int_{s}^{t}(\gamma(\bar{x}_{i}^{\epsilon}(u))+\phi*\bar{\rho}_{u}^{\epsilon}(\bar{x}_{i}^{\epsilon}(u)))du}\nonumber\\
&=&(-\nabla V(\bar{x}_{i}^{\epsilon}(s))-\nabla K*\bar{\rho}_{s}^{\epsilon}(\bar{x}_{i}^{\epsilon}(s)))e^{-\frac{1}{\epsilon}\int_{s}^{t}(\gamma(\bar{x}_{i}^{\epsilon}(u))+\phi*\bar{\rho}_{u}^{\epsilon}(\bar{x}_{i}^{\epsilon}(u)))du},
\end{eqnarray*}
and by (\ref{equ:2.2}),
\begin{eqnarray}\label{equ:2.7}
y_{i}^{\epsilon}(t)=y_{i}^{\epsilon}(0)e^{-\frac{1}{\epsilon}\int_{s}^{t}(\gamma(\bar{x}_{i}^{\epsilon}(u))+\phi*\bar{\rho}_{u}^{\epsilon}(\bar{x}_{i}^{\epsilon}(u)))du}.
\end{eqnarray}
Then by $(\mathbf{H_{3}})$ and $(\mathbf{H_{4}})$,
\begin{eqnarray}\label{equ:2.61}
&&\|\psi^{\epsilon}(t)\|\nonumber\\
&&\leq \|\nabla V(\bar{x}_{i}^{\epsilon}(s))+\nabla K*\bar{\rho}_{s}^{\epsilon}(\bar{x}_{i}^{\epsilon}(s))e^{-\frac{1}{\epsilon}\int_{s}^{t}(\gamma(\bar{x}_{i}^{\epsilon}(u))+\phi*\bar{\rho}_{u}^{\epsilon}(\bar{x}_{i}^{\epsilon}(u)))du}\|\nonumber\\
&&\leq\|\nabla V(\bar{x}_{i}^{\epsilon}(s))+\nabla K*\bar{\rho}_{s}^{\epsilon}(\bar{x}_{i}^{\epsilon}(s))e^{-\frac{1}{\epsilon}\int_{s}^{t}(\gamma(\bar{x}_{i}^{\epsilon}(u))+\mathbb{E}^{j}\phi(\bar{x}_{i}^{\epsilon}(u)-\bar{x}_{j}^{\epsilon}(u)))du}\|\nonumber\\
&&\leq\|\nabla V(\bar{x}_{i}^{\epsilon}(s))+\nabla K*\bar{\rho}_{s}^{\epsilon}(\bar{x}_{i}^{\epsilon}(s))\|e^{-\frac{1}{\epsilon}(\lambda_{\gamma}+\lambda_{\phi})(t-s)},
\end{eqnarray}
and
\begin{eqnarray*}\label{equ:2.8}
\|y_{i}^{\epsilon}(t)\bar{v}_{i,0}\|\leq \|\bar{v}_{i,0}\|e^{-\frac{1}{\epsilon}(\lambda_{\gamma}+\lambda_{\phi})t}.
\end{eqnarray*}
Furthermore,
\begin{eqnarray*}\label{equ:2.91}
\|y_{i}^{\epsilon}(t)y_{i}^{\epsilon}(s)^{-1}\|\leq e^{-\frac{1}{\epsilon}(\lambda_{\gamma}+\lambda_{\phi})(t-s)},
\end{eqnarray*}
and
\begin{eqnarray}\label{equ:2.15}
\Big\|\frac{1}{\epsilon}\int_{u}^{t}y_{i}^{\epsilon}(s)y_{i}^{\epsilon}(u)^{-1}ds\Big\|\leq\frac{1}{\lambda_{\gamma}+\lambda_{\phi}}.
\end{eqnarray}
Then
\begin{eqnarray}\label{equ:2.8}
\|I_{1}^{\epsilon}(t)\|&\leq& \int_{0}^{t}\|y_{i}^{\epsilon}(s)\bar{v}_{i,0}\|ds\leq \|\bar{v}_{i,0}\|\int_{0}^{t}e^{-\frac{1}{\epsilon}(\lambda_{\gamma}+\lambda_{\phi})s}ds\nonumber\\
&\leq& \frac{\epsilon}{\lambda_{\gamma}+\lambda_{\phi}}\|\bar{v}_{i,0}\|,
\end{eqnarray}
and by (\ref{equ:2.61}),
\begin{eqnarray}\label{equ:2.9}
\|I_{2}^{\epsilon}(t)\|&\leq& \frac{1}{\epsilon}\int_{0}^{t}\int_{0}^{s}\|y_{i}^{\epsilon}(s)y_{i}^{\epsilon}(u)^{-1}(\nabla V(\bar{x}_{i}^{\epsilon}(u))+\nabla K*\bar{\rho}_{u}^{\epsilon}(\bar{x}_{i}^{\epsilon}(u)))\|duds\nonumber\\
&\leq&\frac{1}{\epsilon}\int_{0}^{t}\int_{0}^{s}\|\nabla V(\bar{x}_{i}^{\epsilon}(u))+\nabla K*\bar{\rho}_{u}^{\epsilon}(\bar{x}_{i}^{\epsilon}(u))\|e^{-\frac{\lambda_{\gamma}+\lambda_{\phi}}{\epsilon}(s-u)}duds\nonumber\\
&\leq&\frac{1}{\epsilon}\int_{0}^{t}\|\nabla V(\bar{x}_{i}^{\epsilon}(u))+\nabla K*\bar{\rho}_{u}^{\epsilon}(\bar{x}_{i}^{\epsilon}(u))\|\int_{u}^{t}e^{-\frac{\lambda_{\gamma}+\lambda_{\phi}}{\epsilon}(s-u)}dsdu\nonumber\\
&\leq&\frac{1}{\lambda_{\gamma}+\lambda_{\phi}}\int_{0}^{t}\|\nabla V(\bar{x}_{i}^{\epsilon}(u))+\nabla K*\bar{\rho}_{u}^{\epsilon}(\bar{x}_{i}^{\epsilon}(u))\|du.
\end{eqnarray}
Note that, by ($\mathbf{H}_{1}$),
\begin{eqnarray}\label{equ:2.10}
\|\nabla V(\bar{x}_{i}^{\epsilon}(u))\|=\|\nabla V(\bar{x}_{i}^{\epsilon}(u))-\nabla V(0)\|\leq L_{V}\|\bar{x}_{i}^{\epsilon}(u)\|,
\end{eqnarray}
and 
\begin{eqnarray}\label{equ:2.11}
&&\|(\nabla K*\bar{\rho}_{u}^{\epsilon})(\bar{x}_{i}^{\epsilon}(u))\|=\|\mathbb{E}^{j}\nabla K(\bar{x}_{i}^{\epsilon}(u)-\bar{x}_{j}^{\epsilon}(u))\|\nonumber\\
&&\leq \mathbb{E}^{j}\|\nabla K(\bar{x}_{i}^{\epsilon}(u)-\bar{x}_{j}^{\epsilon}(u))-\nabla K(0)\|+\|\nabla K(0)\|\nonumber\\
&&\leq L_{K}(\|\bar{x}_{i}^{\epsilon}(u)\|+\mathbb{E}\|\bar{x}_{i}^{\epsilon}(u)\|)+\|\nabla K(0)\|\nonumber\\
&&\leq C_{L_{K},\|\nabla K(0)\|}(1+\|\bar{x}_{i}^{\epsilon}(u)\|+\mathbb{E}\|\bar{x}_{i}^{\epsilon}(u)\|).
\end{eqnarray}
Then from (\ref{equ:2.9}) to (\ref{equ:2.11}), we have
\begin{eqnarray}\label{equ:2.12}
\|I_{2}^{\epsilon}(t)\|\leq C_{\lambda_{\gamma},\lambda_{\phi},L_{K},L_{V},\|\nabla K(0)\|}\int_{0}^{t}(1+\|\bar{x}_{i}^{\epsilon}(u)\|+\mathbb{E}\|\bar{x}_{i}^{\epsilon}(u)\|)du,
\end{eqnarray}
and 
\begin{eqnarray}\label{equ:2.13}
\mathbb{E}\sup_{0\leq t\leq T}\|I_{2}^{\epsilon}(t)\|^{2}&\leq&C_{T,\lambda_{\gamma},\lambda_{\phi},L_{K},L_{V},\|\nabla K(0)\|}\nonumber\\
&&+C_{T,\lambda_{\gamma},\lambda_{\phi},L_{K},L_{V},\|\nabla K(0)\|}\int_{0}^{T}\mathbb{E}\sup_{0\leq s\leq t}\|\bar{x}_{i}^{\epsilon}(s)\|^{2}dt.\nonumber\\
\end{eqnarray}
For $I_{3}^{\epsilon}(t)$, by stochastic Fubini's theorem, (\ref{equ:2.7}), (\ref{equ:2.15}) and the Burkh${\rm \ddot{o}}$lder-Davies-Gundy inequality,
\begin{eqnarray}\label{equ:2.14}
\mathbb{E}\sup_{0\leq t\leq T}\|I_{3}^{\epsilon}(t)\|^{2}&=&\mathbb{E}\sup_{0\leq t\leq T}\Big\|\frac{1}{\epsilon}\int_{0}^{t}\int_{0}^{s}y_{i}^{\epsilon}(s)y_{i}^{\epsilon}(u)^{-1}\sigma(\bar{x}_{i}^{\epsilon}(u))dB_{i}(u)ds\Big\|^{2}\nonumber\\
&=&\mathbb{E}\sup_{0\leq t\leq T}\Big\|\frac{1}{\epsilon}\int_{0}^{t}\int_{u}^{t}y_{i}^{\epsilon}(s)y_{i}^{\epsilon}(u)^{-1}\sigma(\bar{x}_{i}^{\epsilon}(u))dsdB_{i}(u)\Big\|^{2}\nonumber\\
&\leq&\frac{4}{\epsilon^{2}}\int_{0}^{T}\mathbb{E}\Big(\int_{u}^{t}e^{-\frac{\lambda_{\gamma}+\lambda_{\phi}}{\epsilon}(s-u)}\|\sigma(\bar{x}_{i}^{\epsilon}(u))\|ds\Big)^{2}du\nonumber\\
&\leq& C_{\lambda_{\gamma},\lambda_{\phi},L_{\sigma},T}\Big(1+\int_{0}^{T}\mathbb{E}\sup_{0\leq s\leq t}\|\bar{x}_{i}^{\epsilon}(s)\|^{2}dt\Big).
\end{eqnarray}
Now by (\ref{equ:2.5}), (\ref{equ:2.8}), (\ref{equ:2.13}) and (\ref{equ:2.14}),
\begin{eqnarray*}\label{equ:2.17}
\mathbb{E}\sup_{0\leq t\leq T}\|\bar{x}_{i}^{\epsilon}(t)\|^{2}&\leq&C_{T,L_{\sigma},\lambda_{\gamma},\lambda_{\phi},L_{K},L_{V},\|\nabla K(0)\|}(1+\mathbb{E}\|\bar{x}_{i,0}\|^{2}+\mathbb{E}\|\bar{v}_{i,0}\|^{2})\nonumber\\
&&+C_{T,\lambda_{\gamma},\lambda_{\phi},L_{K},L_{V},\|\nabla K(0)\|}\int_{0}^{T}\mathbb{E}\sup_{0\leq s\leq t}\|\bar{x}_{i}^{\epsilon}(s)\|^{2}dt,
\end{eqnarray*}
and the Gronwall inequality yields
\begin{eqnarray*}\label{equ:2.18}
\mathbb{E}\sup_{0\leq t\leq T}\|\bar{x}_{i}^{\epsilon}(t)\|^{2}\leq C_{T,\lambda_{\gamma},\lambda_{\phi},L_{K},L_{V},\|\nabla K(0)\|}(1+\mathbb{E}\|\bar{x}_{i,0}\|^{2}+\mathbb{E}\|\bar{v}_{i,0}\|^{2}).
\end{eqnarray*}
\end{proof}

Next we show that $\bar{x}_{i}^{\epsilon}(t)$ is H${\rm\ddot{o}}$lder continuous.

\begin{lemma}\label{SC}
Suppose that {\rm($\mathbf{H}_{1}$)}-{\rm($\mathbf{H}_{4}$)} holds, then for any $0\leq s,t\leq T$, there exists a constant $C_{T}>0$ such that
\begin{eqnarray*}
\mathbb{E}\|\bar{x}_{i}^{\epsilon}(t)-\bar{x}_{i}^{\epsilon}(s)\|^{2}\leq C_{T}|t-s|,
\end{eqnarray*}
where $C_{T}$ also depends on parameters $\lambda_{\gamma},\lambda_{\phi},L_{\sigma},L_{K},L_{V}$ and initial value.
\end{lemma}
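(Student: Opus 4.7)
The plan is to re-use the representation \eqref{equ:2.5} from the proof of Lemma \ref{SUB}, which writes $\bar{x}_i^\epsilon(t)=\bar{x}_{i,0}+\sum_{j=1}^{3}I_j^\epsilon(t)$, and to estimate $\mathbb{E}\|I_j^\epsilon(t)-I_j^\epsilon(s)\|^{2}$ for each $j$ with a constant independent of $\epsilon$. Throughout I rely on Lemma \ref{SUB} to control $\mathbb{E}\sup_{0\le u\le T}\|\bar{x}_i^\epsilon(u)\|^{2}$ and on the exponential bound $\|y_i^\epsilon(r)y_i^\epsilon(u)^{-1}\|\le e^{-(\lambda_\gamma+\lambda_\phi)(r-u)/\epsilon}$ (cf.\ \eqref{equ:2.7}).

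The two pathwise pieces are routine. For $I_1^\epsilon$,
\begin{eqnarray*}
\|I_1^\epsilon(t)-I_1^\epsilon(s)\|=\Big\|\int_s^t y_i^\epsilon(r)\bar{v}_{i,0}\,dr\Big\|\le\|\bar{v}_{i,0}\|(t-s),
\end{eqnarray*}
so $\mathbb{E}\|I_1^\epsilon(t)-I_1^\epsilon(s)\|^{2}\le C_T(t-s)^{2}$. For $I_2^\epsilon$, Cauchy--Schwarz in the outer time variable together with \eqref{equ:2.15} and the linear-growth bounds on $\nabla V$ and $\nabla K*\bar{\rho}_u^\epsilon$ (cf.\ the estimates leading to (3.12) in the previous proof) yields $\mathbb{E}\|I_2^\epsilon(t)-I_2^\epsilon(s)\|^{2}\le C_T(t-s)^{2}$. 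Both are absorbed into $C_T|t-s|$ since $|t-s|\le T$.

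The main obstacle is $I_3^\epsilon$, because the $1/\epsilon$ prefactor rules out a naive estimate of $\dot{\bar{x}}_i^\epsilon$ and then bounding $\mathbb{E}\int_s^t\|\dot{\bar{x}}_i^\epsilon\|^{2}dr$. I circumvent this via stochastic Fubini, rewriting
\begin{eqnarray*}
I_3^\epsilon(t)=\int_0^{t}g^\epsilon(u,t)\,dB_i(u),\qquad g^\epsilon(u,t):=\frac{1}{\epsilon}\int_u^{t}y_i^\epsilon(r)y_i^\epsilon(u)^{-1}\sigma(\bar{x}_i^\epsilon(u))\,dr,
\end{eqnarray*}
and splitting the difference as
\begin{eqnarray*}
I_3^\epsilon(t)-I_3^\epsilon(s)=\int_0^{s}[g^\epsilon(u,t)-g^\epsilon(u,s)]\,dB_i(u)+\int_s^{t}g^\epsilon(u,t)\,dB_i(u).
\end{eqnarray*}
After It\^o's isometry the key quantitative input is the pointwise estimate
\begin{eqnarray*}
\Big\|\frac{1}{\epsilon}\int_s^{t}y_i^\epsilon(r)y_i^\epsilon(u)^{-1}\,dr\Big\|\le\frac{e^{-(\lambda_\gamma+\lambda_\phi)(s-u)/\epsilon}}{\lambda_\gamma+\lambda_\phi}\bigl(1-e^{-(\lambda_\gamma+\lambda_\phi)(t-s)/\epsilon}\bigr),
\end{eqnarray*}
combined with the elementary inequality $(1-e^{-x})^{2}\le\min\{1,x^{2}\}$. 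This last inequality treats the regimes $t-s\lesssim\epsilon$ and $t-s\gtrsim\epsilon$ simultaneously: in both, a short computation gives $\int_0^{s}\|g^\epsilon(u,t)-g^\epsilon(u,s)\|^{2}du\le C(t-s)\sup_{u}\|\sigma(\bar{x}_i^\epsilon(u))\|^{2}$ and $\int_s^{t}\|g^\epsilon(u,t)\|^{2}du\le C(t-s)\sup_{u}\|\sigma(\bar{x}_i^\epsilon(u))\|^{2}$, with constants independent of $\epsilon$. Using $(\mathbf{H}_{2})$ (linear growth of $\sigma$) and Lemma \ref{SUB} to control $\mathbb{E}\|\sigma(\bar{x}_i^\epsilon(u))\|^{2}$ yields $\mathbb{E}\|I_3^\epsilon(t)-I_3^\epsilon(s)\|^{2}\le C_T(t-s)$. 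Summing the three contributions gives the claimed bound.
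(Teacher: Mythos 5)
Your proposal is correct and follows essentially the same route as the paper's proof: both integrate the mild representation of $\dot{\bar{x}}_{i}^{\epsilon}$ over $[s,t]$, dispose of the initial-velocity and deterministic force terms via the exponential kernel bound $\|y_{i}^{\epsilon}(r)y_{i}^{\epsilon}(u)^{-1}\|\leq e^{-(\lambda_{\gamma}+\lambda_{\phi})(r-u)/\epsilon}$ (yielding $O(|t-s|^{2})$), and treat the stochastic term by stochastic Fubini, a split of the It\^{o} integral at $s$, and the isometry. Your use of $(1-e^{-x})^{2}\leq\min\{1,x^{2}\}$ on the piece $\int_{0}^{s}[g^{\epsilon}(u,t)-g^{\epsilon}(u,s)]\,dB_{i}(u)$ is in fact a cleaner and tighter way to obtain the $\epsilon$-uniform $O(|t-s|)$ bound than the paper's mean-value-theorem step in $R_{4}^{\epsilon}$, but this is a local refinement of the same argument rather than a genuinely different proof.
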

\begin{proof}
For any $0\leq s\leq t\leq T$, by (\ref{equ:2.4}), 
\begin{eqnarray}\label{RZo}
&&\mathbb{E}\|\bar{x}_{i}^{\epsilon}(t)-\bar{x}_{i}^{\epsilon}(s)\|^{2}=\mathbb{E}\Big\|\int_{s}^{t}\dot{\bar{x}}_{i}^{\epsilon}(u)du\Big\|^{2}\nonumber\\
&&=\mathbb{E}\Big\|\int_{s}^{t}\Big[y_{i}^{\epsilon}(u)\bar{v}_{i,0}-\frac{1}{\epsilon}\int_{0}^{u}y_{i}^{\epsilon}(u)y_{i}^{\epsilon}(r)^{-1}\nabla V(\bar{x}_{i}^{\epsilon}(r))dr\nonumber\\
&&\quad-\frac{1}{\epsilon}\int_{0}^{u}y_{i}^{\epsilon}(u)y_{i}^{\epsilon}(r)^{-1}\nabla K*\bar{\rho}_{r}^{\epsilon}(\bar{x}_{i}^{\epsilon}(r))dr+\frac{1}{\epsilon}\int_{0}^{u}y_{i}^{\epsilon}(u)y_{i}^{\epsilon}(r)^{-1}\sigma(\bar{x}_{i}^{\epsilon}(r))dB_{i}(r)\Big]du\Big\|^{2}\nonumber\\
&&\leq4\mathbb{E}\Big\|\int_{s}^{t}y_{i}^{\epsilon}(u)\bar{v}_{i,0}\Big\|^{2}+\frac{4}{\epsilon^{2}}\mathbb{E}\Big\|\int_{s}^{t}\int_{0}^{u}y_{i}^{\epsilon}(u)y_{i}^{\epsilon}(r)^{-1}\nabla V(\bar{x}_{i}^{\epsilon}(r))drdu\Big\|^{2}\nonumber\\
&&\quad+\frac{4}{\epsilon^{2}}\mathbb{E}\Big\|\int_{s}^{t}\int_{0}^{u}y_{i}^{\epsilon}(u)y_{i}^{\epsilon}(r)^{-1}\nabla K*\bar{\rho}_{r}^{\epsilon}(\bar{x}_{i}^{\epsilon}(r))drdu\Big\|^{2}\nonumber\\
&&\quad+\frac{8}{\epsilon^{2}}\Big\|\int_{s}^{t}\int_{0}^{u}y_{i}^{\epsilon}(u)y_{i}^{\epsilon}(r)^{-1}\sigma(\bar{x}_{i}^{\epsilon}(r))dB_{i}(r)du\Big\|^{2}\nonumber\\
&&\triangleq \sum_{j=1}^{4}R_{j}^{\epsilon}(t).
\end{eqnarray}
Next we estimate the four terms respectively.
For $R_{1}^{\epsilon}(t)$, by (\ref{equ:2.7}), the H${\rm\ddot{o}}$lder inequality and  {\rm($\mathbf{H}_{3}$)}, we have
\begin{eqnarray}\label{R1}
R_{1}^{\epsilon}(t)&=&4\mathbb{E}\Big\|\int_{s}^{t}y_{i}^{\epsilon}(u)\bar{v}_{i,0}du\Big\|^{2}\leq 4|t-s|\int_{s}^{t}\mathbb{E}\|y_{i}^{\epsilon}(u)\bar{v}_{i,0}\|^{2}du\nonumber\\
&\leq&4|t-s|\int_{s}^{t}e^{-\frac{2}{\epsilon}(\lambda_{\gamma}+\lambda_{\phi})u}du\cdot\mathbb{E}\|\bar{v}_{i,0}\|^{2}\nonumber\\
&\leq&C_{\lambda_{\gamma},\lambda_{\phi},T}|t-s|\mathbb{E}\|\bar{v}_{i,0}\|^{2}.
\end{eqnarray}
For $R_{2}^{\epsilon}(t)$, by {\rm($\mathbf{H}_{3}$)},
\begin{eqnarray}\label{R2}
R_{2}^{\epsilon}(t)&=&\frac{4}{\epsilon^{2}}\mathbb{E}\Big\|\int_{s}^{t}\int_{0}^{u}y_{i}^{\epsilon}(u)y_{i}^{\epsilon}(r)^{-1}\nabla V(\bar{x}_{i}^{\epsilon}(r))drdu\Big\|^{2}\nonumber\\
&\leq&\frac{4}{\epsilon^{2}}\mathbb{E}\Big(\int_{s}^{t}\int_{0}^{u}\|y_{i}^{\epsilon}(u)y_{i}^{\epsilon}(r)^{-1}\|\|\nabla V(\bar{x}_{i}^{\epsilon}(r))\|drdu\Big)^{2}\nonumber\\
&\leq&\frac{8L_{V}^{2}}{\epsilon^{2}}\mathbb{E}\Big(\int_{s}^{t}\int_{0}^{u}\|y_{i}^{\epsilon}(u)y_{i}^{\epsilon}(r)^{-1}\|drdu\Big)^{2}\nonumber\\
&&+\frac{8L_{V}^{2}}{\epsilon^{2}}\mathbb{E}\Big(\int_{s}^{t}\int_{0}^{u}\|y_{i}^{\epsilon}(u)y_{i}^{\epsilon}(r)^{-1}\|\|\bar{x}_{i}^{\epsilon}(r)\|drdu\Big)^{2}\nonumber\\
&\triangleq&R_{2,1}^{\epsilon}(t)+R_{2,2}^{\epsilon}(t).
\end{eqnarray}
By {\rm($\mathbf{H}_{3}$)} and the H${\rm\ddot{o}}$lder inequality,
\begin{eqnarray}\label{R2-1}
R_{2,1}^{\epsilon}(t)&=&\frac{8L_{V}^{2}}{\epsilon^{2}}\mathbb{E}\Big(\int_{s}^{t}\int_{0}^{u}\|y_{i}^{\epsilon}(u)y_{i}^{\epsilon}(r)^{-1}\|drdu\Big)^{2}\nonumber\\
&\leq&\frac{8L_{V}^{2}}{\epsilon^{2}}|t-s|\mathbb{E}\int_{s}^{t}\Big(\int_{0}^{u}e^{-\frac{2}{\epsilon}(\lambda_{\gamma}+\lambda_{\phi})(u-r)}dr\Big)^{2}du\nonumber\\
&\leq& C_{\lambda_{\gamma},\lambda_{\phi},L_{V},T}|t-s|.
\end{eqnarray}
Similarly, together with Lemma \ref{SUB},
\begin{eqnarray}\label{R2-2}
R_{2,2}^{\epsilon}(t)&=&\frac{8L_{V}^{2}}{\epsilon^{2}}\mathbb{E}\Big(\int_{s}^{t}\int_{0}^{u}\|y_{i}^{\epsilon}(u)y_{i}^{\epsilon}(r)^{-1}\|\|\bar{x}_{i}^{\epsilon}(r)\|drdu\Big)^{2}\nonumber\\
&\leq&\frac{8L_{V}^{2}}{\epsilon^{2}}\mathbb{E}\sup_{0\leq t\leq T}\|\bar{x}_{i}^{\epsilon}(t)\|^{2}\Big(\int_{s}^{t}\int_{0}^{u}e^{-\frac{2}{\epsilon}(\lambda_{\gamma}+\lambda_{\phi})(u-r)}drdu\Big)^{2}\nonumber\\
&\leq&C_{\lambda_{\gamma},\lambda_{\phi},L_{V},T}|t-s|(1+\mathbb{E}\|\bar{x}_{i,0}\|^{2}+\mathbb{E}\|\bar{v}_{i,0}\|^{2}).
\end{eqnarray}
Then combining (\ref{R2}) with (\ref{R2-1}) and (\ref{R2-2}),
\begin{eqnarray}\label{R2R}
R_{2}^{\epsilon}(t)\leq C_{\lambda_{\gamma},\lambda_{\phi},L_{V},T}|t-s|(1+\mathbb{E}\|\bar{x}_{i,0}\|^{2}+\mathbb{E}\|\bar{v}_{i,0}\|^{2}).
\end{eqnarray}
For $R_{3}^{\epsilon}(t)$, by (\ref{equ:2.7}), {\rm($\mathbf{H}_{3}$)} and Lemma \ref{SUB},
\begin{eqnarray}\label{R3}
R_{3}^{\epsilon}(t)&=&\frac{4}{\epsilon^{2}}\mathbb{E}\Big\|\int_{s}^{t}\int_{0}^{u}y_{i}^{\epsilon}(u)y_{i}^{\epsilon}(r)^{-1}\nabla K*\bar{\rho}_{r}^{\epsilon}(\bar{x}_{i}^{\epsilon}(r))drdu\Big\|^{2}\nonumber\\
&\leq&\frac{4}{\epsilon^{2}}\mathbb{E}\Big(\int_{s}^{t}\int_{0}^{u}\|y_{i}^{\epsilon}(u)y_{i}^{\epsilon}(r)^{-1}\|\|\nabla K*\bar{\rho}_{r}^{\epsilon}(\bar{x}_{i}^{\epsilon}(r))\|drdu\Big)^{2}\nonumber\\
&\leq&\frac{C_{L_{K}}}{\epsilon^{2}}\mathbb{E}\Big(\int_{s}^{t}\int_{0}^{u}e^{-\frac{1}{\epsilon}(\lambda_{\gamma}+\lambda_{\phi})(u-r)}(1+\|\bar{x}_{i}^{\epsilon}(r)\|+\mathbb{E}\|\bar{x}_{i}^{\epsilon}(r)\|)drdu\Big)^{2}\nonumber\\
&\leq&\frac{C_{L_{K}}}{\epsilon^{2}}(1+\mathbb{E}\sup_{0\leq t\leq T}\|\bar{x}_{i}^{\epsilon}(t)\|^{2})\Big(\int_{s}^{t}\int_{0}^{u}e^{-\frac{2}{\epsilon}(\lambda_{\gamma}+\lambda_{\phi})(u-r)}drdu\Big)^{2}\nonumber\\
&\leq&C_{\lambda_{\gamma},\lambda_{\phi},L_{K},T}|t-s|(1+\mathbb{E}\|\bar{x}_{i,0}\|^{2}+\mathbb{E}\|\bar{v}_{i,0}\|^{2}).
\end{eqnarray}
For $R_{4}^{\epsilon}(t)$, by the H${\rm\ddot{o}}$lder inequality and the mean value theorem of integrals, there exists $\theta\in [s,t]$, such that $\int_{s}^{t}y_{i}^{\epsilon}(u)y_{i}^{\epsilon}(r)^{-1}du=y_{i}^{\epsilon}(\theta)y_{i}^{\epsilon}(r)^{-1}(t-s)$, we obtain
\begin{eqnarray}
&&R_{4}^{\epsilon}(t)=\frac{8}{\epsilon^{2}}\Big\|\int_{s}^{t}\int_{0}^{u}y_{i}^{\epsilon}(u)y_{i}^{\epsilon}(r)^{-1}\sigma(\bar{x}_{i}^{\epsilon}(r))dB_{i}(r)du\Big\|^{2}\nonumber\\
&&=\frac{8}{\epsilon^{2}}\mathbb{E}\Big\|\int_{0}^{t}\int_{0}^{u}y_{i}^{\epsilon}(u)y_{i}^{\epsilon}(r)^{-1}\sigma(\bar{x}_{i}^{\epsilon}(r))dB_{i}(r)du-\int_{0}^{s}\int_{0}^{u}y_{i}^{\epsilon}(u)y_{i}^{\epsilon}(r)^{-1}\sigma(\bar{x}_{i}^{\epsilon}(r))dB_{i}(r)du\Big\|^{2}\nonumber\\
&&=\frac{8}{\epsilon^{2}}\mathbb{E}\Big\|\int_{0}^{t}\int_{r}^{t}y_{i}^{\epsilon}(u)y_{i}^{\epsilon}(r)^{-1}\sigma(\bar{x}_{i}^{\epsilon}(r))dudB_{i}(r)-\int_{0}^{s}\int_{r}^{s}y_{i}^{\epsilon}(u)y_{i}^{\epsilon}(r)^{-1}\sigma(\bar{x}_{i}^{\epsilon}(r))dudB_{i}(r)\Big\|^{2}\nonumber
\end{eqnarray}
\begin{eqnarray}\label{R4}
&&\leq \frac{64}{\epsilon^{2}}\int_{0}^{s}\mathbb{E}\Big(\int_{s}^{t}\|\sigma(\bar{x}_{i}^{\epsilon}(r))\|e^{-\frac{1}{\epsilon}(\lambda_{\gamma}+\lambda_{\phi})(u-r)}du\Big)^{2}dr\nonumber\\
&&\quad+\frac{64}{\epsilon^{2}}\int_{s}^{t}\mathbb{E}\Big(\int_{r}^{t}\|\sigma(\bar{x}_{i}^{\epsilon}(r))\|e^{-\frac{1}{\epsilon}(\lambda_{\gamma}+\lambda_{\phi})(u-r)}du\Big)^{2}dr\nonumber\\
&&\leq\frac{64}{\epsilon^{2}}|t-s|^{2}\int_{0}^{s}\mathbb{E}\|\sigma(\bar{x}_{i}^{\epsilon}(r))\|^{2}e^{-\frac{2}{\epsilon}(C_{\lambda_{\gamma}}+C_{\lambda_{\phi}})(u-r)}dr\nonumber\\
&&\quad+\frac{64}{(\lambda_{\gamma}+\lambda_{\phi})^{2}}\int_{s}^{t}\mathbb{E}\|\sigma(\bar{x}_{i}^{\epsilon}(r))\|^{2}\Big(1-e^{-\frac{1}{\epsilon}(C_{\lambda_{\gamma}}+C_{\lambda_{\phi}})(t-r)}\Big)dr\nonumber\\
&&\leq \frac{64L_{\sigma}^{2}}{\epsilon^{2}}|t-s|^{2}\int_{0}^{s}(1+\mathbb{E}\sup_{0\leq u\leq r}\|\bar{x}_{i}^{\epsilon}(u)\|^{2})e^{-\frac{2}{\epsilon}(\lambda_{\gamma}+\lambda_{\phi})(\theta-r)}dr\nonumber\\
&&\quad+\frac{64L_{\sigma}^{2}}{(\lambda_{\gamma}+\lambda_{\phi})^{2}}\int_{s}^{t}(1+\mathbb{E}\sup_{0\leq u\leq r}\|\bar{x}_{i}^{\epsilon}(u)\|^{2})dr\nonumber\\
&&\leq C_{\lambda_{\gamma},\lambda_{\phi},L_{\sigma},T}|t-s|.
\end{eqnarray}
Then by (\ref{RZo}), (\ref{R1}), (\ref{R2R}),(\ref{R3}) and (\ref{R4}), we have
\begin{eqnarray*}
\mathbb{E}\|\bar{x}_{i}^{\epsilon}(t)-\bar{x}_{i}^{\epsilon}(s)\|^{2}\leq C_{\lambda_{\gamma},\lambda_{\phi},\sigma,L_{K},L_{V},T}(1+\mathbb{E}\|\bar{x}_{i,0}\|^{2}+\mathbb{E}\|\bar{v}_{i,0}\|^{2})|t-s|.
\end{eqnarray*}
\end{proof}

We further need some estimate on $\dot{\bar{x}}_{i}^{\epsilon}(t)$.
\begin{lemma}\label{EWJU}
Under assumptions {\rm($\mathbf{H}_{1}$)}-{\rm($\mathbf{H}_{4}$)}, for every $T>0$, 
\begin{eqnarray*}
\mathbb{E}\|\sqrt{\epsilon}\dot{\bar{x}}_{i}^{\epsilon}(t)\|^{2}\leq C_{T},
\end{eqnarray*}
where $C_{T}$ also depends on parameters $\lambda_{\gamma},\lambda_{\phi},L_{V},L_{K},L_{\sigma},\|K(0)\|,\bar{x}_{i,0},\bar{v}_{i,0}$.
\end{lemma}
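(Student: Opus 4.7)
The plan is to return to the variation-of-constants formula (\ref{equ:2.4}) derived in the proof of Lemma \ref{SUB}, multiply both sides by $\sqrt{\epsilon}$, square, take expectation, and bound each of the four resulting terms using the same exponential estimate on $y_{i}^{\epsilon}(t)y_{i}^{\epsilon}(s)^{-1}$ that was already the workhorse of the previous two lemmas. Concretely, writing
\begin{eqnarray*}
\sqrt{\epsilon}\,\dot{\bar{x}}_{i}^{\epsilon}(t)
&=&\sqrt{\epsilon}\,y_{i}^{\epsilon}(t)\bar{v}_{i,0}
-\frac{1}{\sqrt{\epsilon}}\int_{0}^{t}y_{i}^{\epsilon}(t)y_{i}^{\epsilon}(s)^{-1}\nabla V(\bar{x}_{i}^{\epsilon}(s))ds \\
&&-\frac{1}{\sqrt{\epsilon}}\int_{0}^{t}y_{i}^{\epsilon}(t)y_{i}^{\epsilon}(s)^{-1}\nabla K*\bar{\rho}_{s}^{\epsilon}(\bar{x}_{i}^{\epsilon}(s))ds
+\frac{1}{\sqrt{\epsilon}}\int_{0}^{t}y_{i}^{\epsilon}(t)y_{i}^{\epsilon}(s)^{-1}\sigma(\bar{x}_{i}^{\epsilon}(s))dB_{i}(s),
\end{eqnarray*}
I would bound the $L^{2}$-norm of each of these four terms separately.

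First, for the initial-velocity term the exponential decay of $y_{i}^{\epsilon}(t)$ given in (\ref{equ:2.7}) together with $(\mathbf{H}_{3})$-$(\mathbf{H}_{4})$ yields $\mathbb{E}\|\sqrt{\epsilon}\,y_{i}^{\epsilon}(t)\bar{v}_{i,0}\|^{2} \le \epsilon\,\mathbb{E}\|\bar{v}_{i,0}\|^{2}$, which is $O(\epsilon)$. For the two deterministic drift terms, I apply the Lipschitz bounds (\ref{equ:2.10}) and (\ref{equ:2.11}) together with $\|y_{i}^{\epsilon}(t)y_{i}^{\epsilon}(s)^{-1}\|\le e^{-(\lambda_{\gamma}+\lambda_{\phi})(t-s)/\epsilon}$; the $1/\sqrt{\epsilon}$ prefactor is absorbed by $\int_{0}^{t}e^{-(\lambda_{\gamma}+\lambda_{\phi})(t-s)/\epsilon}ds \le \epsilon/(\lambda_{\gamma}+\lambda_{\phi})$, leaving a net factor of $\sqrt{\epsilon}$ multiplied by $\sup_{0\le s\le T}\|\bar{x}_{i}^{\epsilon}(s)\|$, so after squaring, taking expectation, and invoking Lemma \ref{SUB}, these contributions are of order $\epsilon \cdot C_{T}(1+\mathbb{E}\|\bar{x}_{i,0}\|^{2}+\mathbb{E}\|\bar{v}_{i,0}\|^{2})$.

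The decisive step is the stochastic integral, and this is really the only place where the scaling matters: by It\^o isometry,
\begin{eqnarray*}
\mathbb{E}\Big\|\frac{1}{\sqrt{\epsilon}}\int_{0}^{t}y_{i}^{\epsilon}(t)y_{i}^{\epsilon}(s)^{-1}\sigma(\bar{x}_{i}^{\epsilon}(s))dB_{i}(s)\Big\|^{2}
\le \frac{1}{\epsilon}\int_{0}^{t}\mathbb{E}\|y_{i}^{\epsilon}(t)y_{i}^{\epsilon}(s)^{-1}\|^{2}\|\sigma(\bar{x}_{i}^{\epsilon}(s))\|^{2}ds.
\end{eqnarray*}
Using $\|y_{i}^{\epsilon}(t)y_{i}^{\epsilon}(s)^{-1}\|^{2} \le e^{-2(\lambda_{\gamma}+\lambda_{\phi})(t-s)/\epsilon}$ and the linear growth of $\sigma$ coming from $(\mathbf{H}_{2})$, namely $\|\sigma(\bar{x})\|^{2} \le 2\|\sigma(0)\|^{2}+2L_{\sigma}^{2}\|\bar{x}\|^{2}$, this is bounded by
\begin{eqnarray*}
\frac{C}{\epsilon}\big(1+\mathbb{E}\sup_{0\le s\le T}\|\bar{x}_{i}^{\epsilon}(s)\|^{2}\big)\int_{0}^{t}e^{-2(\lambda_{\gamma}+\lambda_{\phi})(t-s)/\epsilon}ds \le \frac{C}{2(\lambda_{\gamma}+\lambda_{\phi})}\big(1+\mathbb{E}\sup_{0\le s\le T}\|\bar{x}_{i}^{\epsilon}(s)\|^{2}\big),
\end{eqnarray*}
which is $O(1)$ by Lemma \ref{SUB}. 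Combining the four estimates via $(a+b+c+d)^{2}\le 4(a^{2}+b^{2}+c^{2}+d^{2})$ gives the desired uniform bound.

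There is no serious obstacle here: the only potentially delicate point is seeing that the stochastic term is exactly of order one (not order $\sqrt{\epsilon}$ or $1/\sqrt{\epsilon}$), which reflects the heuristic that $\epsilon\|\dot{\bar{x}}_{i}^{\epsilon}\|^{2}$ should equilibrate to a kinetic-energy-type quantity of order one. This is precisely the quantitative statement that will justify discarding the inertial term $\epsilon\ddot{\bar{x}}_{i}^{\epsilon}$ when passing to the limit $\epsilon\to 0$ in later sections.
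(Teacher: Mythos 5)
Your route is genuinely different from the paper's. The paper does not return to the Duhamel formula (\ref{equ:2.4}) at all: it applies It\^o's formula directly to $\|\sqrt{\epsilon}\bar{v}_{i}^{\epsilon}(t)\|^{2}$ using the velocity equation (\ref{EWJV}), exploits the coercivity of the symmetric part of $\gamma+\phi*\bar{\rho}_{t}^{\epsilon}$ from $(\mathbf{H}_{3})$--$(\mathbf{H}_{4})$ to produce a dissipative term $-\frac{\lambda_{\gamma}+\lambda_{\phi}}{2\epsilon}\mathbb{E}\|\sqrt{\epsilon}\bar{v}_{i}^{\epsilon}\|^{2}$, absorbs the force terms by Young's inequality, keeps the It\^o correction $\frac{1}{\epsilon}{\rm Tr}[\sigma\sigma^{*}]$, and closes with Gronwall and Lemma \ref{SUB}. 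Your handling of the initial-velocity term and of the two drift terms is correct and even gives the sharper $O(\epsilon)$ contributions, and your identification of the stochastic term as the unique $O(1)$ contribution is the right heuristic.

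The gap is in the estimate of that stochastic term. Because the friction is state-dependent, $y_{i}^{\epsilon}(t)$ is a functional of the path up to time $t$ and is not $\mathcal{F}_{s}$-measurable; hence $s\mapsto y_{i}^{\epsilon}(t)y_{i}^{\epsilon}(s)^{-1}\sigma(\bar{x}_{i}^{\epsilon}(s))$ is an anticipating integrand and the It\^o isometry you invoke does not apply to it. The expression must be read as $y_{i}^{\epsilon}(t)\int_{0}^{t}y_{i}^{\epsilon}(s)^{-1}\sigma(\bar{x}_{i}^{\epsilon}(s))dB_{i}(s)$, i.e. the decaying factor sits outside the martingale, and the obvious patch of bounding $\|y_{i}^{\epsilon}(t)\|$ and the martingale separately fails: $(\mathbf{H}_{3})$--$(\mathbf{H}_{4})$ control only the smallest eigenvalue of the symmetric part, so $\|y_{i}^{\epsilon}(s)^{-1}\|$ may grow like $e^{Cs/\epsilon}$ with $C$ far exceeding $\lambda_{\gamma}+\lambda_{\phi}$, and the product of the two separate bounds blows up. The natural repair is to observe that $Z_{t}:=\frac{1}{\epsilon}y_{i}^{\epsilon}(t)\int_{0}^{t}y_{i}^{\epsilon}(s)^{-1}\sigma(\bar{x}_{i}^{\epsilon}(s))dB_{i}(s)$ solves $dZ_{t}=-\frac{1}{\epsilon}(\gamma+\phi*\bar{\rho}_{t}^{\epsilon})Z_{t}dt+\frac{1}{\epsilon}\sigma dB_{i}(t)$ and to run an energy estimate on $\mathbb{E}\|\sqrt{\epsilon}Z_{t}\|^{2}$ --- which is precisely the paper's It\^o-formula argument applied to the velocity equation. (For constant $\gamma$ and $\phi$ the factor $y_{i}^{\epsilon}(t)y_{i}^{\epsilon}(s)^{-1}=e^{-(\gamma+\phi)(t-s)/\epsilon}$ is deterministic and your computation would be valid; the state-dependence is exactly what breaks it.)
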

\begin{proof}
By (\ref{EWJV}), $\dot{\bar{x}}_{i}^{\epsilon}(t)=\bar{v}_{i}^{\epsilon}(t)$ and It${\rm\hat{o}}$'s formula, 
\begin{eqnarray*}
\frac{1}{2}\frac{d}{dt}\|\sqrt{\epsilon}\bar{v}_{i}^{\epsilon}(t)\|^{2}&=&-\bar{v}_{i}^{\epsilon}(t)\cdot(\gamma(\bar{x}_{i}^{\epsilon})+\phi*\bar{\rho}_{t}^{\epsilon}(\bar{x}_{i}^{\epsilon}))\bar{v}_{i}^{\epsilon}(t)+\bar{v}_{i}^{\epsilon}(t)\cdot(\nabla V(\bar{x}_{i}^{\epsilon})+\nabla K*\bar{\rho}_{t}^{\epsilon}(\bar{x}_{i}^{\epsilon}))\\
&&+\bar{v}_{i}^{\epsilon}(t)\cdot\sigma(\bar{x}_{i}^{\epsilon}(t)\dot{B}_{i}(t))+\frac{1}{\epsilon}{\rm Tr}[\sigma(\bar{x}_{i}^{\epsilon}(t))\sigma(\bar{x}_{i}^{\epsilon}(t))^{*}].
\end{eqnarray*}
Furthermore, by Young's inequality,
\begin{eqnarray*}
\frac{1}{2}\frac{d}{dt}\mathbb{E}\|\sqrt{\epsilon}\bar{v}_{i}^{\epsilon}(t)\|^{2}&\leq&-\frac{\lambda_{\gamma}+\lambda_{\phi}}{2}\mathbb{E}\|\bar{v}_{i}^{\epsilon}(t)\|^{2}+\frac{1}{2(\lambda_{\gamma}+\lambda_{\phi})}\mathbb{E}\|\nabla V(\bar{x}_{i}^{\epsilon})+\nabla K*\bar{\rho}_{t}^{\epsilon}(\bar{x}_{i}^{\epsilon})\|^{2}\\
&&+\frac{1}{\epsilon}\mathbb{E}\|\sigma(\bar{x}_{i}^{\epsilon}(t))\|^{2}\\
&\leq&-\frac{\lambda_{\gamma}+\lambda_{\phi}}{2}\mathbb{E}\|\bar{v}_{i}^{\epsilon}(t)\|^{2}+\frac{L_{V}^{2}}{\lambda_{\gamma}+\lambda_{\phi}}\mathbb{E}\|\bar{x}_{i}^{\epsilon}(t)\|^{2}+\frac{L_{K}^{2}}{\lambda_{\gamma}+\lambda_{\phi}}\mathbb{E}^{i}[\mathbb{E}^{j}\|\bar{x}_{i}^{\epsilon}(t)-\bar{x}_{j}^{\epsilon}(t)\|^{2}]\\
&&+\frac{\|K(0)\|}{\lambda_{\gamma}+\lambda_{\phi}}+\frac{L_{\sigma}^{2}}{\epsilon}(1+\mathbb{E}\|\bar{x}_{i}^{\epsilon}(t)\|^{2})\\
&\leq&-\frac{\lambda_{\gamma}+\lambda_{\phi}}{2\epsilon}\mathbb{E}\|\sqrt{\epsilon}\bar{v}_{i}^{\epsilon}(t)\|^{2}+\frac{C_{\lambda_{\gamma},\lambda_{\phi},\sigma,L_{K},L_{V},L_{\sigma}}}{\epsilon}(1+\mathbb{E}\|\bar{x}_{i}^{\epsilon}(t)\|^{2}).
\end{eqnarray*}
Gronwall inequality yields
\begin{eqnarray*}
\mathbb{E}\|\sqrt{\epsilon}\bar{v}_{i}^{\epsilon}(t)\|^{2}\leq C_{\lambda_{\gamma},\lambda_{\phi},\sigma,L_{K},L_{V},L_{\sigma},T,\bar{x}_{i,0},\bar{v}_{i,0},\|K(0)\|}.
\end{eqnarray*}
\end{proof}

We also need some estimates to $\epsilon\mathbb{E}^{x}(\bar{v}_{i}^{\epsilon}(t)\otimes \bar{v}_{i}^{\epsilon}(t))$. By (\ref{EWJV}), for fixed $\bar{x}_{i}^{\epsilon}(t)=x, \bar{\rho}_{t}^{\epsilon}(x)=\bar{\rho}(x)$, let $\bar{v}_{i}^{\epsilon,x,\bar{\rho}}$ satisfy the following SDEs,
\begin{eqnarray}
\epsilon\dot{\bar{v}}_{i}^{\epsilon,x,\bar{\rho}}=-(\gamma(x)+\phi*\bar{\rho}(x))\bar{v}_{i}^{\epsilon,x,\bar{\rho}}-\nabla V(x)-\nabla K*\bar{\rho}(x)+\sigma(x)\dot{B}_{i}(t),
\end{eqnarray}
then for any $t\geq0$, $\mathbb{E}^{x}\|\bar{v}_{i}^{\epsilon,\bar{\rho}}(t)\|^{2}=\mathbb{E}\|\bar{v}_{i}^{\epsilon,x,\bar{\rho}}(t)\|^{2}$. We have the following result,
\begin{lemma}\label{lem:3.1}
Assuming that $(\mathbf{H_{1}})$-$(\mathbf{H_{4}})$ are valid, then for every $x\in\mathbb{R}^{d}$ and fixed $\bar{x}_{i}^{\epsilon}(t)=x, \bar{\rho}_{t}^{\epsilon}(t)=\bar{\rho}(x)$, we have 
\begin{eqnarray}
\mathbb{E}^{x}(\epsilon\bar{v}_{i}^{\epsilon,\bar{\rho}}(t)\otimes\bar{v}_{i}^{\epsilon,\bar{\rho}}(t))=J(x,\bar{\rho})+\epsilon C(x,\bar{\rho},t),
\end{eqnarray}
where $\|C(x,\bar{\rho},t)\|\leq C\Big(1+\|x\|^{2}+\int_{\mathbb{R}^{d}}\|y\|\bar{\rho}(y)dy\Big)$.
\end{lemma}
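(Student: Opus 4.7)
The plan is to use the fact that, once $\bar x_i^\epsilon(t)=x$ and $\bar\rho_t^\epsilon=\bar\rho$ are frozen, the equation for $\bar v_i^{\epsilon,x,\bar\rho}$ is a linear time-homogeneous SDE with constant coefficients---essentially a multi-dimensional Ornstein--Uhlenbeck process---whose second moment can be written down explicitly and compared directly with the Lyapunov solution $J(x,\bar\rho)$. Writing $A:=\gamma(x)+\phi*\bar\rho(x)$, $b:=-\nabla V(x)-\nabla K*\bar\rho(x)$ and $\Sigma:=\sigma(x)$, variation of parameters gives
\begin{eqnarray*}
\bar v_i^{\epsilon,x,\bar\rho}(t) = e^{-At/\epsilon}\bar v_{i,0} + A^{-1}(I-e^{-At/\epsilon})b + \frac{1}{\epsilon}\int_0^t e^{-A(t-s)/\epsilon}\Sigma\,dB_i(s),
\end{eqnarray*}
where $A$ is invertible and $\|e^{-As/\epsilon}\|\le e^{-(\lambda_\gamma+\lambda_\phi)s/\epsilon}$ thanks to $(\mathbf{H_3})$-$(\mathbf{H_4})$.

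I would then split $\bar v_i^{\epsilon,x,\bar\rho}=\mu+\xi$, where $\mu(t):=e^{-At/\epsilon}\bar v_{i,0}+A^{-1}(I-e^{-At/\epsilon})b$ is deterministic and $\xi$ is the (centred Gaussian) stochastic integral, so that $\mathbb{E}^x(\bar v\otimes\bar v)=\mu\otimes\mu+\mathbb{E}^x(\xi\otimes\xi)$. It\^o's isometry together with the change of variable $r=(t-s)/\epsilon$ yields $\mathbb{E}^x(\xi(t)\otimes\xi(t))=\epsilon^{-1}\int_0^{t/\epsilon}e^{-Ar}\Sigma\Sigma^*e^{-A^*r}\,dr$. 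Multiplying by $\epsilon$ and using the representation of $J(x,\bar\rho)$ from the Remark after Theorem \ref{main2},
\begin{eqnarray*}
\epsilon\,\mathbb{E}^x(\bar v(t)\otimes\bar v(t)) = J(x,\bar\rho) + \epsilon\,\mu(t)\otimes\mu(t) - \int_{t/\epsilon}^\infty e^{-Ar}\Sigma\Sigma^*e^{-A^*r}\,dr,
\end{eqnarray*}
so that one can read off $C(x,\bar\rho,t)=\mu(t)\otimes\mu(t)-\epsilon^{-1}\int_{t/\epsilon}^\infty e^{-Ar}\Sigma\Sigma^*e^{-A^*r}\,dr$.

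It remains to bound $C$. The first piece is straightforward: $\|A^{-1}\|\le 1/(\lambda_\gamma+\lambda_\phi)$ combined with the Lipschitz estimates on $\nabla V$ and $\nabla K$ already used in (\ref{equ:2.10})-(\ref{equ:2.11}) controls $\|\mu\|$ in terms of $1+\|x\|+\int\|y\|\bar\rho(y)\,dy+\|\bar v_{i,0}\|$. The tail piece, using the linear growth of $\sigma$ from $(\mathbf{H_2})$, is dominated by $\|\sigma(x)\|^2 e^{-2(\lambda_\gamma+\lambda_\phi)t/\epsilon}/(2\epsilon(\lambda_\gamma+\lambda_\phi))$. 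This is where the main subtlety lies: the prefactor $1/\epsilon$ is absorbed only by the exponential decay, so the asserted $\epsilon$-uniform estimate on $C$ is really a statement for $t$ bounded away from zero, in line with the range $t\in[t_*,T]$ that appears in Theorem \ref{main2}.
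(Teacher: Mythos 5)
Your argument is correct and reaches the same decomposition as the paper, but by a slightly different route. The paper applies It\^o's formula to $\bar v\otimes\bar v$, takes $\mathbb{E}^{x}$ to obtain a differential Lyapunov equation for the second-moment matrix, and solves it with the Sylvester formula of Lemma \ref{YT}; this produces four terms (initial datum, two cross terms of the form $\int_0^t e^{-A(t-s)/\epsilon}\,b\otimes\mathbb{E}^{x}\bar v(s)\,e^{-A^{*}(t-s)/\epsilon}ds$, and the noise term), each estimated separately. You instead work pathwise: variation of parameters, the mean/fluctuation split $\bar v=\mu+\xi$, and It\^o's isometry, which packages the paper's first three terms into the single object $\epsilon\,\mu(t)\otimes\mu(t)$ (strictly, $\epsilon\,\mathbb{E}^{x}[\mu\otimes\mu]$ if $\bar v_{i,0}$ is random, using its independence from $B_i$ and $\mathbb{E}\xi=0$). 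Both routes then identify $J(x,\bar\rho)$ as the full half-line integral $\int_0^\infty e^{-Ar}\Sigma\Sigma^{*}e^{-A^{*}r}dr$ and treat the tail $\int_{t/\epsilon}^{\infty}$ identically. Your approach is the more elementary of the two and makes the structure of the remainder transparent. Two remarks. First, your closing observation is a genuine point that the paper glosses over: the tail contribution to $C(x,\bar\rho,t)$ is $\epsilon^{-1}\cdot\frac{\|\Sigma\|^{2}}{2(\lambda_\gamma+\lambda_\phi)}e^{-2(\lambda_\gamma+\lambda_\phi)t/\epsilon}$, which is $O(1)$ uniformly in $\epsilon$ only for $t\gtrsim\epsilon\log(1/\epsilon)$; the paper's own bound on its term $J_{4,2}^{\epsilon}$ silently requires the same restriction, consistent with the lemma only being invoked on $[t_*,T]$. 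Second, a shared cosmetic issue: $\|\mu\|^{2}$ produces $\bigl(\int\|y\|\bar\rho(y)dy\bigr)^{2}$, which is not literally dominated by the stated bound $1+\|x\|^{2}+\int\|y\|\bar\rho(y)dy$; the paper's estimate (3.16) drops the same quadratic factor, and both are harmless since $\bar\rho\in\mathcal{P}_{2}$ (indeed Gaussian in the paper's usage), but the bound should properly read $\bigl(1+\|x\|+\int\|y\|\bar\rho(y)dy\bigr)^{2}$ or involve the second moment of $\bar\rho$.
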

\begin{proof}
Lemma \ref{YT} yields
\begin{eqnarray}
\bar{v}_{i}^{\epsilon,x,\bar{\rho}}(t)&=&e^{-\frac{\gamma(x)+\phi*\bar{\rho}(x)}{\epsilon}t}\bar{v}_{i,0}\nonumber\\
&&-\frac{1}{\epsilon}\int_{0}^{t}e^{-\frac{\gamma(x)+\phi*\bar{\rho}(x)}{\epsilon}(t-s)}(\nabla V(x)+\nabla K*\bar{\rho}(x))ds\nonumber\\
&&+\frac{1}{\epsilon}\int_{0}^{t}e^{-\frac{\gamma(x)+\phi*\bar{\rho}(x)}{\epsilon}(t-s)}\sigma(x)dB_{i}(s),
\end{eqnarray}
then
\begin{eqnarray}\label{equ:3.12}
\mathbb{E}^{x}\bar{v}_{i}^{\epsilon,\bar{\rho}}(t)&=&e^{-\frac{\gamma(x)+\phi*\bar{\rho}(x)}{\epsilon}t}\mathbb{E}^{x}\bar{v}_{i,0}\nonumber\\
&-&(\gamma(x)+\phi*\bar{\rho}(x))^{-1}(I_{d\times d}-e^{-\frac{\gamma(x)+\phi*\bar{\rho}(x)}{\epsilon}t})\nonumber\\
&&\cdot(\nabla V(x)+\nabla K*\bar{\rho}(x)).
\end{eqnarray}
Applying It${\rm \hat{o}}$'s formula to $\bar{v}_{i}^{\epsilon,\bar{\rho}}(t)\otimes\bar{v}_{i}^{\epsilon,\bar{\rho}}(t)$,
\begin{eqnarray}\label{equ:3.13}
\frac{d}{dt}\mathbb{E}^{x}(\epsilon\bar{v}_{i}^{\epsilon,\bar{\rho}}(t)\otimes\bar{v}_{i}^{\epsilon,\bar{\rho}}(t))&=&-\frac{\gamma(x)+\phi*\bar{\rho}(x)}{\epsilon}\mathbb{E}^{x}(\epsilon\bar{v}_{i}^{\epsilon,\bar{\rho}}(t)\otimes\bar{v}_{i}^{\epsilon,\bar{\rho}}(t))\nonumber\\
&&-(\nabla V(x)+\nabla K*\bar{\rho}(x))\otimes \mathbb{E}^{x}\bar{v}_{i}^{\epsilon,\bar{\rho}}(t)\nonumber\\
&&-\mathbb{E}^{x}(\epsilon\bar{v}_{i}^{\epsilon,\bar{\rho}}(t)\otimes\bar{v}_{i}^{\epsilon,\bar{\rho}}(t))\frac{(\gamma(x)+\phi*\bar{\rho}(x))^{*}}{\epsilon}\nonumber\\
&&-\mathbb{E}^{x}\bar{v}_{i}^{\epsilon,\bar{\rho}}(t)\otimes(\nabla V(x)+\nabla K*\bar{\rho}(x))+\frac{1}{\epsilon}\sigma(x)\sigma(x)^{*}.
\end{eqnarray}\label{equ:3.14}
Then by Lemma \ref{YT},
\begin{eqnarray}\label{EVZ}
&&\mathbb{E}^{x}(\epsilon\bar{v}_{i}^{\epsilon,\bar{\rho}}(t)\otimes\bar{v}_{i}^{\epsilon,\bar{\rho}}(t))\nonumber\\
&=&e^{-\frac{\gamma(x)+\phi*\bar{\rho}(x)}{\epsilon}t}\mathbb{E}^{x}(\epsilon\bar{v}_{i,0}\otimes\bar{v}_{i,0})e^{-\frac{(\gamma(x)+\phi*\bar{\rho}(x))^{*}}{\epsilon}t}\nonumber\\
&&-\int_{0}^{t}e^{-\frac{\gamma(x)+\phi*\bar{\rho}(x)}{\epsilon}(t-s)}(\nabla V(x)+\nabla K*\bar{\rho}(x))\otimes\mathbb{E}^{x}\bar{v}_{i}^{\epsilon,\bar{\rho}}(s)e^{-\frac{(\gamma(x)+\phi*\bar{\rho}(x))^{*}}{\epsilon}(t-s)}ds\nonumber\\
&&-\int_{0}^{t}e^{-\frac{\gamma(x)+\phi*\bar{\rho}(x)}{\epsilon}(t-s)}\mathbb{E}^{x}\bar{v}_{i}^{\epsilon,\bar{\rho}}(s)\otimes(\nabla V(x)+\nabla K*\bar{\rho}(x))e^{-\frac{(\gamma(x)+\phi*\bar{\rho}(x))^{*}}{\epsilon}(t-s)}ds\nonumber\\
&&+\int_{0}^{t}e^{-\frac{\gamma(x)+\phi*\bar{\rho}(x)}{\epsilon}(t-s)}\sigma(x)\sigma(x)^{*}e^{-\frac{(\gamma(x)+\phi*\bar{\rho}(x))^{*}}{\epsilon}(t-s)}ds\nonumber\\
&\triangleq& \sum_{i=0}^{4}J_{i}^{\epsilon}(t).
\end{eqnarray}

Next we estimate the four terms respectively. In fact, by {\rm($\mathbf{H}_{3}$)} and {\rm($\mathbf{H}_{4}$)},
\begin{eqnarray}\label{equ:3.15}
\|J_{1}^{\epsilon}(t)\|&=&\|e^{-2\frac{\gamma(x)+\phi*\bar{\rho}(x)}{\epsilon}t}\|\|\mathbb{E}^{x}(\epsilon\bar{v}_{i,0}\otimes\bar{v}_{i,0})\|\|(e^{-2\frac{\gamma(x)+\phi*\bar{\rho}(x)}{\epsilon}t})^{*}\|\nonumber\\
&\leq&\epsilon e^{-2\frac{\lambda_{\gamma}+\lambda_{\phi}}{\epsilon}t}\mathbb{E}^{x}\|\bar{v}_{i,0}\|^{2}\to0,\quad \epsilon\to0.
\end{eqnarray}
By (\ref{equ:3.12}) and {\rm($\mathbf{H}_{1}$)}, {\rm($\mathbf{H}_{3}$)} and {\rm($\mathbf{H}_{4}$)}, for any $0\leq t\leq T$,
\begin{eqnarray}\label{EVH}
&&\|\mathbb{E}^{x}\bar{v}_{i}^{\epsilon,\bar{\rho}}(t)\|\nonumber\\
&\leq& \|e^{-\frac{\gamma(x)+\phi*\bar{\rho}(x)}{\epsilon}t}\mathbb{E}^{x}\bar{v}_{i,0}\|+\|(\gamma(x)+\phi*\bar{\rho}(x))^{-1}(\nabla V(x)+\nabla K*\bar{\rho}(x))\|\nonumber\\
&&+\|(\gamma(x)+\phi*\bar{\rho}(x))^{-1}e^{-\frac{\gamma(x)+\phi*\bar{\rho}(x)}{\epsilon}t}(\nabla V(x)+\nabla K*\bar{\rho}(x))\|\nonumber\\
&\leq&e^{-\frac{\lambda_{\gamma}+\lambda_{\phi}}{\epsilon}t}\mathbb{E}\|\bar{v}_{i,0}\|+\frac{1}{\lambda_{\gamma}+\lambda_{\phi}}\Big(L_{V}\|x\|+L_{K}\Big(1+\|x\|+\int_{\mathbb{R}^{d}}\|y\|\bar{\rho}(y)dy\Big)+\|\nabla K(0)\|\Big)\nonumber\\
&&+\frac{1}{\lambda_{\gamma}+\lambda_{\phi}}e^{-\frac{\lambda_{\gamma}+\lambda_{\phi}}{\epsilon}t}\Big(L_{V}\|x\|+L_{K}\Big(1+\|x\|+\int_{\mathbb{R}^{d}}\|y\|\bar{\rho}(y)dy\Big)+\|\nabla K(0)\|\Big)\nonumber\\
&\leq&C_{\lambda_{\gamma},\lambda_{\phi},L_{K},L_{V},\bar{v}_{i,0},\|\nabla K(0)\|}\Big(1+\|x\|+\int_{\mathbb{R}^{d}}\|y\|\bar{\rho}(y)dy\Big).
\end{eqnarray}
Then for $J_{2}^{\epsilon}(t)$, by (\ref{EVH}), {\rm($\mathbf{H}_{1}$)}, {\rm($\mathbf{H}_{3}$)} and {\rm($\mathbf{H}_{4}$)},
\begin{eqnarray}\label{equ:3.16}
\|J_{2}^{\epsilon}(t)\|
&\leq&\int_{0}^{t}e^{-2\frac{\lambda_{\gamma}+\lambda_{\phi}}{\epsilon}(t-s)}\|\nabla V(x)+\nabla K*\bar{\rho}(x)\|\|\mathbb{E}^{x}\bar{v}_{i}^{\epsilon,\bar{\rho}}(s)\|ds\nonumber\\
&\leq&\epsilon C_{\lambda_{\gamma},\lambda_{\phi},L_{K},L_{V},\bar{v}_{i,0},\|\nabla K(0)\|}\Big(1+\|x\|+\int_{\mathbb{R}^{d}}\|y\|\bar{\rho}(y)dy\Big).
\end{eqnarray}
Similariy,
\begin{eqnarray}\label{equ:3.17}
\|J_{3}^{\epsilon}(t)\|
&\leq&\epsilon C_{\lambda_{\gamma},\lambda_{\phi},L_{K},L_{V},\bar{v}_{i,0},\|\nabla K(0)\|}\Big(1+\|x\|+\int_{\mathbb{R}^{d}}\|y\|\bar{\rho}(y)dy\Big).
\end{eqnarray}
Further, for $J_{4}^{\epsilon}(t)$, let $\tau=\frac{t-s}{\epsilon}$,
\begin{eqnarray}\label{equ:3.18}    
J_{4}^{\epsilon}(t)&=&\int_{0}^{\frac{t}{\epsilon}}e^{-(\gamma(x)+\phi*\bar{\rho}(x))\tau}\sigma(x)\sigma(x)^{*}e^{-(\gamma(x)+\phi*\bar{\rho}(x))^{*}\tau}d\tau\nonumber\\
&=&\int_{0}^{\infty}e^{-(\gamma(x)+\phi*\bar{\rho}(x))\tau}\sigma(x)\sigma(x)^{*}e^{-(\gamma(x)+\phi*\bar{\rho}(x))^{*}\tau}d\tau\nonumber\\
&&-\int_{\frac{t}{\epsilon}}^{\infty}e^{-(\gamma(x)+\phi*\bar{\rho}(x))\tau}\sigma(x)\sigma(x)^{*}e^{-(\gamma(x)+\phi*\bar{\rho}(x))^{*}\tau}d\tau\nonumber\\
&\triangleq&J_{4,1}^{\epsilon}(t)+J_{4,2}^{\epsilon}(t).
\end{eqnarray}
Note that 
\begin{eqnarray*}
\|J_{4,2}^{\epsilon}(t)\|&\leq&\int_{\frac{t}{\epsilon}}^{\infty}e^{-2(\lambda_{\gamma}+\lambda_{\phi})\tau}\|\sigma(x)\|^{2}d\tau\\
&\leq&C_{L_{\sigma}}(1+\|x\|^{2})\int_{\frac{t}{\epsilon}}^{\infty}e^{-2(\lambda_{\gamma}+\lambda_{\phi})\tau}d\tau\leq\epsilon C_{L_{\sigma},\lambda_{\gamma},\lambda_{\phi}}(1+\|x\|^{2}).
\end{eqnarray*}
Thus, by (\ref{EVZ}), (\ref{equ:3.15}), (\ref{equ:3.16}), (\ref{equ:3.17}), (\ref{equ:3.18}), we have
\begin{eqnarray*}
\mathbb{E}^{x}(\epsilon\bar{v}_{i}^{\epsilon,\bar{\rho}}(t)\otimes\bar{v}_{i}^{\epsilon,\bar{\rho}}(t))=J(x,\bar{\rho})+\epsilon C(x,\bar{\rho},t),
\end{eqnarray*}
where $\|C(x,\bar{\rho},t)\|\leq C_{\lambda_{\gamma},\lambda_{\phi},L_{K},L_{V},\bar{v}_{i,0},\|\nabla K(0)\|}\Big(1+\|x\|^{2}+\int_{\mathbb{R}^{d}}\|y\|\bar{\rho}(y)dy\Big)$ and 
\begin{eqnarray}\label{JBU}
J(x,\bar{\rho})=\int_{0}^{\infty}e^{-(\gamma(x)+\phi*\bar{\rho}(x))t}\sigma(x)\sigma(x)^{*}e^{-(\gamma(x)+\phi*\bar{\rho}(x))^{*}t}dt,
\end{eqnarray}
here $\bar{\rho}$ is an arbitrary Gaussian distribution.
\end{proof}
\begin{remark}
By $(\mathbf{H_{2}})$, $(\mathbf{H_{3}})$ and $(\mathbf{H_{4}})$, 
\begin{eqnarray}\label{EWJ}
\|J(x,\bar{\rho})\|\leq \int_{0}^{\infty}e^{-2(\lambda_{\gamma}+\lambda_{\phi})t}\|\sigma(x)\|^{2}dt\leq C_{\lambda_{\gamma},\lambda_{\phi},L_{\sigma}}(1+\|x\|^{2}).
\end{eqnarray}
\end{remark}
In order to estimate the difference between the process $Y_{t}^{\epsilon}$ (See (\ref{equ:3.3}) below) and the auxiliary processes $\hat{Y}_{t}^{\epsilon}$ (See (\ref{equ:3.5}) below), we give the following result.
\begin{lemma}\label{YPLR}
Under assumptions $(\mathbf{H_{1}})$-$(\mathbf{H_{4}})$, for every $T>0$ and $\psi\in C_{0}^{\infty}(\mathbb{R}^{d};\mathbb{R}^{d})$, 
\begin{eqnarray*}\label{Yz}
\sup_{0\leq t\leq T}|\langle Y_{t}^{\epsilon},\psi\rangle|&\leq& C_{T,\bar{x}_{i,0},\bar{v}_{i,0},L_{V},L_{K},\lambda_{\gamma},\lambda_{\phi}}\|\psi\|_{Lip}.
\end{eqnarray*}
Here $\langle\cdot,\cdot\rangle$ denotes the inner product on $L^{2}(\mathbb{R}^{d})$ and $\|\cdot\|_{Lip}$ denotes the Lipschitz norm defined by 
\begin{eqnarray*}
\|f\|_{Lip}=\|f\|_{\infty}+\sup_{x\neq y}\frac{\|f(x)-f(y)\|}{\|x-y\|}.
\end{eqnarray*}
\end{lemma}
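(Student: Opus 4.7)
The statement of (3.3) is not visible in the excerpt, but in the averaging setup built above $Y_t^\epsilon$ must be a vector-valued macroscopic quantity assembled from the slow position $\bar{x}_i^\epsilon$ and the fast velocity $\bar{v}_i^\epsilon = \dot{\bar{x}}_i^\epsilon$. I would begin by unfolding $\langle Y_t^\epsilon,\psi\rangle$ as an expectation (or a space integral against $\bar{\rho}_t^\epsilon$) involving $\psi(\bar{x}_i^\epsilon(t))$, $\nabla\psi(\bar{x}_i^\epsilon(t))$, and the drift/diffusion coefficients. Then I would substitute the representation (\ref{equ:2.5}) of $\bar{x}_i^\epsilon$ (or, where velocities appear, the representation (\ref{equ:2.4}) of $\bar{v}_i^\epsilon$) together with the semigroup formula (\ref{equ:2.7}) for $y_i^\epsilon$, so as to split the pairing into the three standard contributions: an initial-data piece, a drift piece, and a stochastic integral piece.

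The drift piece is controlled using $\|\psi\|_\infty\leq\|\psi\|_{Lip}$, the Lipschitz bounds ($\mathbf{H}_1$) on $\nabla V$ and $\nabla K$, and the uniform moment estimate of Lemma \ref{SUB}. The exponential contraction of $y_i^\epsilon(t)y_i^\epsilon(s)^{-1}$ guaranteed by ($\mathbf{H}_3$)–($\mathbf{H}_4$) absorbs the $1/\epsilon$ prefactor exactly as in (\ref{equ:2.15})–(\ref{equ:2.13}). The stochastic piece is handled by stochastic Fubini, the BDG inequality, the growth $\|\sigma(x)\|\leq L_\sigma(1+\|x\|)$ coming from ($\mathbf{H}_2$), and the same exponential-decay cancellation, mimicking (\ref{equ:2.14}). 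The sup in $t$ is recovered by an application of Doob's inequality to the martingale component, together with the $L^2(\Omega;C([0,T]))$ bound of Lemma \ref{SUB}.

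The main obstacle is any term containing the bare velocity $\bar{v}_i^\epsilon$, because Lemma \ref{EWJU} only yields $\mathbb{E}\|\bar{v}_i^\epsilon(t)\|^2=O(\epsilon^{-1})$, so a crude sup-norm pairing of $\psi$ against $\bar{v}_i^\epsilon$ would blow up as $\epsilon\to 0$. This is precisely why the Lipschitz seminorm of $\psi$ must enter the estimate. The natural remedy is an integration by parts in time, writing $\psi(\bar{x}_i^\epsilon(t))\cdot\bar{v}_i^\epsilon(t)=\frac{d}{dt}\Psi(\bar{x}_i^\epsilon(t))$ where $\Psi$ is a suitable antiderivative (or, equivalently, applying It\^{o}'s formula to $\psi(\bar{x}_i^\epsilon(t))$ itself) so that the singular velocity is replaced by boundary terms at $t=0,T$ plus an interior term involving $\nabla\psi(\bar{x}_i^\epsilon)$ tested against the drift/diffusion. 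The boundary terms are bounded by $\|\psi\|_\infty(\,\cdots)$, and the interior term is bounded by $\sup_{x\neq y}\|\psi(x)-\psi(y)\|/\|x-y\|$ times the moments already controlled. Combined, these deliver the uniform bound $C_{T,\bar{x}_{i,0},\bar{v}_{i,0},L_V,L_K,\lambda_\gamma,\lambda_\phi}\|\psi\|_{Lip}$ claimed in the lemma.
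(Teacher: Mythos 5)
Your proposal works at the microscopic level, whereas the paper proves this lemma with a macroscopic energy estimate. Since $Y_t^{\epsilon}=\int_{\mathbb{R}^d}vf_t^{\epsilon}(\cdot,v)\,dv$ is the local momentum, so that $\langle Y_t^{\epsilon},\psi\rangle=\mathbb{E}[\bar v_i^{\epsilon}(t)\cdot\psi(\bar x_i^{\epsilon}(t))]$, the paper differentiates $|\langle Y_t^{\epsilon},\psi\rangle|^2$ along the momentum equation (\ref{equ:3.4.2}), uses the coercivity of $\gamma+\phi*\bar\rho_t^{\epsilon}$ from $(\mathbf{H_{3}})$--$(\mathbf{H_{4}})$ to produce a damping term $-\frac{\lambda_\gamma+\lambda_\phi}{\epsilon}|\langle Y_t^{\epsilon},\psi\rangle|^2$, absorbs the two $O(1/\epsilon)$ source terms by Young's inequality, and, crucially, controls the kinetic stress term $\nabla_x\cdot[\bar\rho_t^{\epsilon}\mathbb{E}^x(\bar v_i^{\epsilon}\otimes\bar v_i^{\epsilon})]$ by moving the divergence onto $\nabla_x\psi$ and invoking Lemma \ref{lem:3.1}, which gives $\mathbb{E}^x(\epsilon\bar v_i^{\epsilon}\otimes\bar v_i^{\epsilon})=J+\epsilon C$ bounded; Gronwall then closes the estimate. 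You correctly identify the object and the essential difficulty (the bare velocity is only $O(\epsilon^{-1/2})$ in $L^2$, so $\|\psi\|_\infty\,\mathbb{E}\|\bar v_i^{\epsilon}\|$ is useless), but your route bypasses both of the paper's key inputs, the evolution equation for $Y_t^{\epsilon}$ and Lemma \ref{lem:3.1}.

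The gap is that the mechanism you propose for taming the bare velocity does not work as stated. A scalar antiderivative $\Psi$ with $\nabla\Psi=\psi$ does not exist for a general (non-conservative) vector field $\psi\in C_0^{\infty}(\mathbb{R}^d;\mathbb{R}^d)$, and applying It\^o's (here just the chain) rule to $\psi(\bar x_i^{\epsilon}(t))$ produces $\nabla\psi(\bar x_i^{\epsilon})\bar v_i^{\epsilon}$, which still contains a bare velocity of size $\epsilon^{-1/2}$ and, since the lemma demands a pointwise-in-$t$ bound rather than a time-integrated one, cannot be telescoped away. In your mild-formulation decomposition the dangerous piece is precisely the cross term $\mathbb{E}\big[\big(\frac{1}{\epsilon}\int_0^t y_i^{\epsilon}(t)y_i^{\epsilon}(s)^{-1}\sigma(\bar x_i^{\epsilon}(s))dB_i(s)\big)\cdot\psi(\bar x_i^{\epsilon}(t))\big]$: Cauchy--Schwarz gives only $O(\epsilon^{-1/2})\|\psi\|_\infty$, and extracting the required cancellation from the correlation between the stochastic convolution and $\psi(\bar x_i^{\epsilon}(t))$ needs either a Gaussian/Malliavin integration by parts (which would yield $\frac{1}{\epsilon}\int_0^te^{-(\lambda_\gamma+\lambda_\phi)(t-s)/\epsilon}O(\|\nabla\psi\|_\infty)\,ds=O(\|\psi\|_{Lip})$, but is machinery neither you nor the paper introduces) or passage to the second-moment level, i.e.\ to $\mathbb{E}^x(\epsilon\bar v_i^{\epsilon}\otimes\bar v_i^{\epsilon})$, which is exactly what the paper's combination of equation (\ref{equ:3.4.2}) and Lemma \ref{lem:3.1} accomplishes. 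Either supply that argument or switch to the paper's energy estimate.
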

\begin{proof}
By the chain rule, (\ref{equ:3.4.2}) and $(\mathbf{H_{1}})$-$(\mathbf{H_{4}})$,
\begin{eqnarray*}\label{equ:3.19}
&&\frac{1}{2}\frac{d}{dt}|\langle Y_{t}^{\epsilon},\psi\rangle|^{2}=\langle Y_{t}^{\epsilon},\psi\rangle\Big[-\Big\langle \frac{\gamma(x)+\phi*\bar{\rho}_{t}^{\epsilon}(x)}{\epsilon}Y_{t}^{\epsilon}, \psi\Big\rangle-\Big\langle\frac{\nabla V(x)+\nabla K*\bar{\rho}_{t}^{\epsilon}(x)}{\epsilon}\bar{\rho}_{t}^{\epsilon},\psi\Big\rangle\nonumber\\
&&\quad\quad\quad\quad+\langle\bar{\rho}_{t}^{\epsilon}\mathbb{E}^{x}(\bar{v}_{i}^{\epsilon}(t)\otimes \bar{v}_{i}^{\epsilon}(t)),\nabla_{x}\psi\rangle\Big]\nonumber\\
&&\leq-\frac{\lambda_{\gamma}+\lambda_{\phi}}{\epsilon}|\langle Y_{t}^{\epsilon},\psi\rangle|^{2}+\frac{1}{\epsilon}|\langle Y_{t}^{\epsilon},\psi\rangle||\mathbb{E}[(\nabla V(\bar{x}_{i}^{\epsilon})+\mathbb{E}^{j}\nabla K(\bar{x}_{i}^{\epsilon}(t)-\bar{x}_{j}^{\epsilon}(t)))\psi(\bar{x}_{i}^{\epsilon}(t))]|\nonumber\\
&&\quad\quad\quad\quad+\frac{1}{\epsilon}|\langle Y_{t}^{\epsilon},\psi\rangle||\mathbb{E}[\mathbb{E}^{\bar{x}_{i}^{\epsilon}(t)}(\epsilon\bar{v}_{i}^{\epsilon}(t)\otimes \bar{v}_{i}^{\epsilon}(t))\nabla_{x}\psi(\bar{x}_{i}^{\epsilon}(t))]|\nonumber\\
&&\leq-\frac{\lambda_{\gamma}+\lambda_{\phi}}{2\epsilon}|\langle Y_{t}^{\epsilon},\psi\rangle|^{2}+\frac{1}{(\lambda_{\gamma}+\lambda_{\phi})\epsilon}|\mathbb{E}[(\nabla V(\bar{x}_{i}^{\epsilon})+\mathbb{E}^{j}\nabla K(\bar{x}_{i}^{\epsilon}(t)-\bar{x}_{j}^{\epsilon}(t)))\psi(\bar{x}_{i}^{\epsilon}(t))]|^{2}\nonumber\\
&&\quad\quad\quad\quad+\frac{1}{(\lambda_{\gamma}+\lambda_{\phi})\epsilon}|\mathbb{E}[\mathbb{E}^{\bar{x}_{i}^{\epsilon}(t)}(\epsilon\bar{v}_{i}^{\epsilon}(t)\otimes \bar{v}_{i}^{\epsilon}(t))\nabla_{x}\psi(\bar{x}_{i}^{\epsilon}(t))]|^{2}\nonumber\\
&&\leq-\frac{\lambda_{\gamma}+\lambda_{\phi}}{2\epsilon}|\langle Y_{t}^{\epsilon},\psi\rangle|^{2}+\frac{\|\psi\|_{Lip}^{2}}{(\lambda_{\gamma}+\lambda_{\phi})\epsilon}\mathbb{E}[L_{V}\|\bar{x}_{i}^{\epsilon}(t)\|+L_{K}\mathbb{E}^{j}(1+\|\bar{x}_{i}^{\epsilon}(t)\|+\|\bar{x}_{j}^{\epsilon}(t)\|)+\|\nabla K(0)\|]^{2}\nonumber\\
&&\quad+\frac{1}{(\lambda_{\gamma}+\lambda_{\phi})\epsilon}\|\psi\|_{Lip}^{2}\|\mathbb{E}[\|J(x,\bar{\rho}_{t}^{\epsilon})\|+\epsilon \|C(x,\bar{\rho}_{t}^{\epsilon},t)\|]^{2}\nonumber\\
&&\leq-\frac{\lambda_{\gamma}+\lambda_{\phi}}{2\epsilon}|\langle Y_{t}^{\epsilon},\psi\rangle|^{2}+\frac{C_{L_{V},L_{K},\lambda_{\gamma},\lambda_{\phi}}}{\epsilon}(1+\mathbb{E}\|\bar{x}_{i}^{\epsilon}(t)\|^{2})\|\psi\|_{Lip}^{2}
\end{eqnarray*}
Gronwall's inequality and Lemma \ref{SUB} yield
\begin{eqnarray*}\label{Yz}
\sup_{0\leq t\leq T}|\langle Y_{t}^{\epsilon},\psi\rangle|&\leq& C_{T,\bar{x}_{i,0},\bar{v}_{i,0},L_{V},L_{K},\lambda_{\gamma},\lambda_{\phi}}\|\psi\|_{Lip}.
\end{eqnarray*}
\end{proof}

By Lemma \ref{SUB} and Lemma \ref{SC}, we obtain that $\{\bar{x}_{i}^{\epsilon}(t)\}_{0< \epsilon\leq1}$ is tight in space $C(0,T;\mathbb{R}^{d})$. Next we pass the limit in (\ref{equ:main1}) as $\epsilon\to0$.

Let $f_{t}^{\epsilon}(x,v)$ be the law of $(\bar{x}_{i}^{\epsilon}(t),\bar{v}_{i}^{\epsilon}(t))$, then rewriting the (\ref{equ:main1}),
\begin{eqnarray*}
&&\dot{\bar{x}}_{i}^{\epsilon}=\bar{v}_{i}^{\epsilon}\\
&&\epsilon\dot{\bar{v}}_{i}^{\epsilon}=-(\gamma(\bar{x}_{i}^{\epsilon})+\phi*\bar{\rho}_{t}^{\epsilon}(\bar{x}_{i}^{\epsilon}))\bar{v}_{i}^{\epsilon}-\nabla V(\bar{x}_{i}^{\epsilon})-\nabla K*\bar{\rho}_{t}^{\epsilon}(\bar{x}_{i}^{\epsilon})+\sigma(\bar{x}_{i}^{\epsilon})\dot{B}_{i}(t),
\end{eqnarray*}
then $f_{t}^{\epsilon}(x,v)$ satisfies the following Vlasov-Fokker-Planck equation
\begin{eqnarray}\label{equ:3.101}
\partial_{t}f_{t}^{\epsilon}+v\nabla_{x}f_{t}^{\epsilon}&=&\frac{\gamma(x)+\phi*\bar{\rho}_{t}^{\epsilon}}{\epsilon}\nabla_{v}(vf_{t}^{\epsilon})-\Big(\frac{\nabla V(x)}{\epsilon}+\frac{\nabla K*\bar{\rho}_{t}^{\epsilon}}{\epsilon}\Big)\nabla_{v}f_{t}^{\epsilon}\nonumber\\
&&+\frac{\sigma(x)}{\epsilon^{2}}\Delta_{v}f_{t}^{\epsilon},
\end{eqnarray}
in the weak sense, that is, for $\Psi\in C_{0}^{\infty}(\mathbb{R}^{d}\times\mathbb{R}^{d})$,
\begin{eqnarray}\label{equ:3.2}
&&\langle f_{t}^{\epsilon},\Psi\rangle-\langle f_{0}^{\epsilon},\Psi\rangle\nonumber\\
&=&\int_{0}^{t}\int_{\mathbb{R}^{d}\times\mathbb{R}^{d}}\Big[v\nabla_{x}\Psi-\Big(\frac{\gamma(x)+\phi*\bar{\rho}_{s}^{\epsilon}(x)}{\epsilon}v-\frac{\nabla V(x)+\nabla K*\bar{\rho}_{s}^{\epsilon}(x)}{\epsilon}\Big)\nabla_{v}\Psi\nonumber\\
&&+\frac{\sigma}{\epsilon^{2}}\Delta_{v}\Psi\Big]f_{s}^{\epsilon}(x,v)dxdv.
\end{eqnarray}

For small $\epsilon>0$, the above Vlasov-Fokker-Planck equation (\ref{equ:3.101}) is a singular system which is difficult to pass the limit $\epsilon\to0$. Here we present an averaging approach to derive such limit for $f_{t}^{\epsilon}$ explicitly.

For this, we define the local mass $\bar{\rho}_{t}^{\epsilon}$, the marginal distribution of $f_{t}^{\epsilon}$, and local momentum $Y_{t}^{\epsilon}$ as
\begin{eqnarray*}
\bar{\rho}_{t}^{\epsilon}(x)=\int_{\mathbb{R}^{d}}f_{t}^{\epsilon}(x,v)dv, \quad Y_{t}^{\epsilon}=\int_{\mathbb{R}^{d}}vf_{t}^{\epsilon}(x,v)dv,
\end{eqnarray*}
respectively. Then $\bar{\rho}_{t}^{\epsilon}$ is the law of $\bar{x}_{i}^{\epsilon}(t)$ and in the weak sense,
\begin{eqnarray}\label{equ:3.3}
\partial_{t}\bar{\rho}_{t}^{\epsilon}&=&-\nabla_{x} Y_{t}^{\epsilon}\\
\partial_{t}Y_{t}^{\epsilon}&=&-\frac{\gamma(x)+\phi*\bar{\rho}_{t}^{\epsilon}}{\epsilon}Y_{t}^{\epsilon}-\frac{\nabla V(x)+\nabla K*\bar{\rho}_{t}^{\epsilon}(x)}{\epsilon}\bar{\rho}_{t}^{\epsilon}\nonumber\\
&&-\nabla_{x}\cdot[\bar{\rho}_{t}^{\epsilon}\mathbb{E}^{x}(\bar{v}_{i}^{\epsilon}(t)\otimes \bar{v}_{i}^{\epsilon}(t))].
\end{eqnarray}
Here we have used the following calculations
\begin{equation*}
\int_{\mathbb{R}^{d}}v\otimes vf_{t}^{\epsilon}(x,v)dv=\bar{\rho}_{t}^{\epsilon}(x)\int_{\mathbb{R}^{d}}v\otimes v\frac{f_{t}^{\epsilon}(x,v)}{\bar{\rho}_{t}^{\epsilon}(x)}dv=\bar{\rho}_{t}^{\epsilon}(x)\mathbb{E}^{x}(\bar{v}_{i}^{\epsilon}(t)\otimes \bar{v}_{i}^{\epsilon}(t)).
\end{equation*}
We couple equation (\ref{equ:main1}) to the above slow-fast system which yields the following closed system
\begin{eqnarray}\label{equ:3.4}
\partial_{t}\bar{\rho}_{t}^{\epsilon}&=&-\nabla_{x}Y_{t}^{\epsilon},\label{Macro1}\\
\partial_{t}Y_{t}^{\epsilon}&=&-\frac{\gamma(x)+\phi*\bar{\rho}_{t}^{\epsilon}}{\epsilon}Y_{t}^{\epsilon}-\frac{\nabla V(x)+\nabla K*\bar{\rho}_{t}^{\epsilon}(x)}{\epsilon}\bar{\rho}_{t}^{\epsilon}\label{equ:3.4.2}\\
&&-\nabla_{x}\cdot[\bar{\rho}_{t}^{\epsilon}\mathbb{E}^{x}(\bar{v}_{i}^{\epsilon}(t)\otimes \bar{v}_{i}^{\epsilon}(t))],\nonumber\\
\dot{\bar{x}}_{i}^{\epsilon}&=&\bar{v}_{i}^{\epsilon},\\
\epsilon\dot{\bar{v}}_{i}^{\epsilon}&=&-(\gamma(\bar{x}_{i}^{\epsilon})+\phi*\bar{\rho}_{t}^{\epsilon}(\bar{x}_{i}^{\epsilon}))\bar{v}_{i}^{\epsilon}-\nabla V(\bar{x}_{i}^{\epsilon})-\nabla K*\bar{\rho}_{t}^{\epsilon}(\bar{x}_{i}^{\epsilon})\label{equ:3.4.3}\\
&&+\sigma(\bar{x}_{i}^{\epsilon})\dot{B}_{i}(t).\nonumber
\end{eqnarray}
Here $\mathbb{E}^{x}$ is the expectation with fixed $\bar{x}_{i}^{\epsilon}(t)=x\in\mathbb{R}^{d}$. We consider the equations (\ref{Macro1})-(\ref{equ:3.4.2}) in the weak sense as we just concerned with the weak convergence of $\bar{\rho}_{t}^{\epsilon}$.
Next we apply an averaging approach to pass the limit $\epsilon\to0$. We need an auxiliary process $\{\hat{\rho}_{t}^{\epsilon}, \hat{Y}_{t}^{\epsilon}\}_{0\leq t\leq T}$. For this we divide the time interval $[0,T]$ into intervals of size $\delta>0$ as $0=t_{0}<t_{1}<\cdots<t_{k}<t_{k+1}<\cdots<t_{[\frac{T}{\delta}]+1}=T$, for $t_{k+1}-t_{k}=\delta, k=0,\cdots,[\frac{T}{\delta}]$ and for $t\in[t_{k},t_{k+1}]$,
\begin{eqnarray}\label{equ:3.5}
\partial_{t}\hat{\rho}_{t}^{\epsilon}&=&-\nabla_{x}\hat{Y}_{t}^{\epsilon},\quad \hat{\rho}_{t_{k}}^{\epsilon}=\bar{\rho}_{t_{k}}^{\epsilon},\\
\partial_{t}\hat{Y}_{t}^{\epsilon}&=&-\frac{\gamma(x)+\phi*\bar{\rho}_{t_{k}}^{\epsilon}(x)}{\epsilon}\hat{Y}_{t}^{\epsilon}-\frac{\nabla V(x)+\nabla K*\bar{\rho}_{t_{k}}^{\epsilon}(x)}{\epsilon}\bar{\rho}_{t_{k}}^{\epsilon}\nonumber\\
&&-\nabla_{x}\cdot[\bar{\rho}_{t_{k}}^{\epsilon}\mathbb{E}^{x}(\bar{v}_{i}^{\epsilon}(t_{k})\otimes \bar{v}_{i}^{\epsilon}(t_{k})],\\
\hat{Y}_{t_{k}}^{\epsilon}&=&Y_{t_{k}}^{\epsilon},\nonumber
\end{eqnarray}
Next lemma gives an estimate of the difference between $\hat{Y}_{t}^{\epsilon}$ and $Y_{t}^{\epsilon}$.
\begin{lemma}\label{Le3}
Assuming that $(\mathbf{H_{1}})$-$(\mathbf{H_{4}})$ are valid, for every $T>0$ and $\psi\in C_{0}^{\infty}(\mathbb{R}^{d};\mathbb{R}^{d})$, there is a constant $C_{T}>0$ such that
\begin{eqnarray*}
\sup_{0\leq t\leq T}|\langle Y_{t}^{\epsilon}-\hat{Y}_{t}^{\epsilon},\psi\rangle|\leq C_{T,\bar{x}_{i,0},\bar{v}_{i,0},\lambda_{\gamma},\lambda_{\phi},L_{\sigma},L_{V},L_{K}}\Big(\frac{\delta}{\epsilon}+\frac{\delta}{\epsilon^{2}}\Big)e^{C_{L_{\phi}}\frac{\delta}{\epsilon}}\|\phi\|_{Lip}.
\end{eqnarray*}
\end{lemma}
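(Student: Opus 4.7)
The plan is to set $W_t^\epsilon := Y_t^\epsilon - \hat{Y}_t^\epsilon$ and run an energy estimate on $\langle W_t^\epsilon,\psi\rangle$ that mirrors the proof of Lemma~\ref{YPLR}. Subtracting (\ref{equ:3.5}) from (\ref{equ:3.4.2}), on each subinterval $[t_k,t_{k+1}]$ the reset $\hat{Y}_{t_k}^\epsilon=Y_{t_k}^\epsilon$ gives $W_{t_k}^\epsilon=0$, and
\begin{eqnarray*}
\partial_t W_t^\epsilon=-\frac{\gamma(x)+\phi*\bar{\rho}_{t_k}^\epsilon(x)}{\epsilon}W_t^\epsilon+R_t^\epsilon,
\end{eqnarray*}
where $R_t^\epsilon$ groups three ``moving minus frozen coefficient'' remainders: the drift-correction $-\frac{(\phi*(\bar{\rho}_t^\epsilon-\bar{\rho}_{t_k}^\epsilon))(x)}{\epsilon}Y_t^\epsilon$, the potential term $\tfrac{1}{\epsilon}[(\nabla V+\nabla K*\bar{\rho}_t^\epsilon)\bar{\rho}_t^\epsilon-(\nabla V+\nabla K*\bar{\rho}_{t_k}^\epsilon)\bar{\rho}_{t_k}^\epsilon]$, and the kinetic divergence $\nabla_x\cdot[\bar{\rho}_t^\epsilon\mathbb{E}^x(\bar{v}_i^\epsilon(t)\otimes\bar{v}_i^\epsilon(t))-\bar{\rho}_{t_k}^\epsilon\mathbb{E}^x(\bar{v}_i^\epsilon(t_k)\otimes\bar{v}_i^\epsilon(t_k))]$.

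Pairing with $\psi$ and computing $\tfrac{1}{2}\tfrac{d}{dt}|\langle W_t^\epsilon,\psi\rangle|^2$, I extract the coercive contribution $-\tfrac{\lambda_\gamma+\lambda_\phi}{\epsilon}|\langle W_t^\epsilon,\psi\rangle|^2$ from the matrix term, exactly as in Lemma~\ref{YPLR}. Everything then reduces to estimating $|\langle R_t^\epsilon,\psi\rangle|$ in terms of $\|\psi\|_{Lip}$ and an explicit function of $\delta,\epsilon$, after integrating by parts in the divergence piece. For the first remainder, the identity $\phi*(\bar{\rho}_t^\epsilon-\bar{\rho}_{t_k}^\epsilon)(x)=\mathbb{E}^j[\phi(x-\bar{x}_j^\epsilon(t))-\phi(x-\bar{x}_j^\epsilon(t_k))]$ combined with the Lipschitz bound on $\phi$ and Lemma~\ref{SC} produces a factor of order $\|\phi\|_{Lip}|t-t_k|^{1/2}$, which is then multiplied by the uniform bound on $|\langle Y_t^\epsilon,\psi\rangle|$ from Lemma~\ref{YPLR}. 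The potential remainder is treated identically using $(\mathbf{H_1})$ together with Lemmas~\ref{SUB} and~\ref{SC}.

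The kinetic term is the delicate one. I would invoke Lemma~\ref{lem:3.1} to substitute $\mathbb{E}^x(\epsilon\bar{v}_i^\epsilon(s)\otimes\bar{v}_i^\epsilon(s))=J(x,\bar{\rho}_s^\epsilon)+\epsilon C(x,\bar{\rho}_s^\epsilon,s)$ at both $s=t$ and $s=t_k$; after integration by parts against $\psi$, the divergence contribution becomes $\tfrac{1}{\epsilon}\langle\bar{\rho}_t^\epsilon J(\cdot,\bar{\rho}_t^\epsilon)-\bar{\rho}_{t_k}^\epsilon J(\cdot,\bar{\rho}_{t_k}^\epsilon),\nabla_x\psi\rangle$ plus an $O(1)$ remainder controlled by Lemma~\ref{SUB}. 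Because $J(x,\bar{\rho})$ enters through the formula (\ref{JBU}) in which $\bar{\rho}$ appears only via $\phi*\bar{\rho}$, a resolvent-type differentiation shows that $\bar{\rho}\mapsto J(\cdot,\bar{\rho})$ is Lipschitz with constant proportional to $\|\phi\|_{Lip}$, so this difference is again $O(\|\phi\|_{Lip}|t-t_k|^{1/2})$, and the analogous Lipschitz control for $\bar{\rho}_t^\epsilon-\bar{\rho}_{t_k}^\epsilon$ in the prefactor follows from Lemma~\ref{SC} and the $\mathbb{E}^j$ representation of the densities.

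Collecting the estimates, the remainder satisfies $|\langle R_t^\epsilon,\psi\rangle|\lesssim \tfrac{1}{\epsilon^2}|t-t_k|^{1/2}\|\phi\|_{Lip}\|\psi\|_{Lip}$ up to the exponential growth factor produced by Gronwall's inequality from the $\tfrac{C_{L_\phi}}{\epsilon}$ coefficient controlling the remaining $\phi$-dependent perturbation of the generator. Applying Gronwall on $[t_k,t_{k+1}]$ with $W_{t_k}^\epsilon=0$ yields the claimed bound on each subinterval, and since the estimate resets at every grid point it does not accumulate over $[0,T]$. The main obstacle is unambiguously the covariance divergence: $\bar{v}_i^\epsilon$ is of order $\epsilon^{-1/2}$ and possesses no usable pathwise time regularity on the fast scale, so a direct Hölder bound on $\bar{v}_i^\epsilon\otimes\bar{v}_i^\epsilon$ is hopeless; routing the argument through the Lyapunov-kernel representation of Lemma~\ref{lem:3.1}, which transfers the time dependence onto the slow variable $\bar{\rho}_t^\epsilon$, is the key observation that makes the estimate work.
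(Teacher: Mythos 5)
Your architecture matches the paper's: freeze the coefficients at $t_k$, use $\hat{Y}_{t_k}^{\epsilon}=Y_{t_k}^{\epsilon}$ so the difference starts from zero on each subinterval, split the remainder into the $\phi$-drift, potential, and kinetic-divergence pieces, and close with Gronwall on $[t_k,t_{k+1}]$ (which is exactly where the factor $e^{C_{L_{\phi}}\delta/\epsilon}$ comes from, and why nothing accumulates over $[0,T]$). The execution differs in two ways. First, the paper does not run an energy estimate on $|\langle Z_t^{\epsilon},\psi\rangle|^2$; it applies the variation-of-constants formula of Lemma \ref{YT} with the kernel $e^{-\gamma(x)(t-s)/\epsilon}$ and bounds that matrix exponential by $e^{-\lambda_{\gamma}(t-s)/\epsilon}$. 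This is a safer route than your coercivity extraction: the inequality $\langle (\gamma+\phi*\bar{\rho})W,\psi\rangle\langle W,\psi\rangle\geq(\lambda_{\gamma}+\lambda_{\phi})|\langle W,\psi\rangle|^{2}$ is not a pointwise eigenvalue fact for the weak pairing (the paper leans on it in Lemma \ref{YPLR}, but its proof of Lemma \ref{Le3} avoids it), whereas $\|e^{-At}\|\leq e^{-\lambda t}$ for $A$ with symmetric part bounded below is standard. The Duhamel kernel is also the source of the $\delta/\epsilon^{2}$ in the stated bound: integrating by parts in $\Pi_3^{\epsilon}$ puts ${\rm grad}\,(e^{-\gamma(x)^{*}(t-s)/\epsilon}\psi)$ in play, which costs an extra $1/\epsilon$. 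Second, and more substantively, the paper does \emph{not} exploit any cancellation between the moving and frozen time slices in $\Pi_2^{\epsilon}$ and $\Pi_3^{\epsilon}$: it bounds the $s$-slice and the $t_k$-slice separately and crudely, and the smallness comes purely from the length $O(\delta)$ of the integration interval. Your plan to gain $|t-t_k|^{1/2}$ from Lemma \ref{SC} would give a sharper remainder, but it forces you to prove that $\bar{\rho}\mapsto J(\cdot,\bar{\rho})$ is Lipschitz through the integral formula (\ref{JBU}) — an ingredient the paper only needs much later, in the $L_{2,2}^{\epsilon}$ estimate of the main proof, and which you assert rather than establish. A related soft spot: the $\sqrt{\delta}$ gain for the $\phi$-drift correction controls only the sup-norm of the matrix multiplier $\phi*(\bar{\rho}_t^{\epsilon}-\bar{\rho}_{t_k}^{\epsilon})$, not its Lipschitz seminorm, so reducing $\langle M Y_t^{\epsilon},\psi\rangle$ to the bound of Lemma \ref{YPLR} applied to the test function $M^{*}\psi$ does not carry the full $\sqrt{\delta}$ factor unless $\phi$ has a bounded second derivative. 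Since the lemma only claims $O(\delta/\epsilon+\delta/\epsilon^{2})$, none of this extra precision is needed; the paper's cruder slice-by-slice bounds suffice.
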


\begin{proof}
Let $Z_{t}^{\epsilon}=Y_{t}^{\epsilon}-\hat{Y}_{t}^{\epsilon}$, then for $t\in[t_{k},t_{k+1}]$, 
\begin{eqnarray*}
\partial_{t}Z_{t}^{\epsilon}&=&-\frac{\gamma(x)+\phi*\bar{\rho}_{t}^{\epsilon}(x)}{\epsilon}Y_{t}^{\epsilon}
+\frac{\gamma(x)+\phi*\bar{\rho}_{t_{k}}^{\epsilon}(x)}{\epsilon}\hat{Y}_{t}^{\epsilon}\\
&&-\frac{\nabla V(x)+\nabla K*\bar{\rho}_{t}^{\epsilon}(x)}{\epsilon}\bar{\rho}_{t}^{\epsilon}
+\frac{\nabla V(x)+\nabla K*\bar{\rho}_{t_{k}}^{\epsilon}(x)}{\epsilon}\bar{\rho}_{t_{k}}^{\epsilon}\\
&&-\nabla_{x}\cdot[\bar{\rho}_{t}^{\epsilon}\mathbb{E}^{x}(\bar{v}_{i}^{\epsilon}(t)\otimes \bar{v}_{i}^{\epsilon}(t)]
+\nabla_{x}\cdot[\bar{\rho}_{t_{k}}^{\epsilon}\mathbb{E}^{x}(\bar{v}_{i}^{\epsilon}(t_{k})\otimes \bar{v}_{i}^{\epsilon}(t_{k})],
\end{eqnarray*}
and by Lemma \ref{YT}, 
\begin{eqnarray}\label{equ:3.6}
&&Z_{t}^{\epsilon}=-\frac{1}{\epsilon}\int_{t_{k}}^{t}e^{-\frac{\gamma(x)}{\epsilon}(t-s)}(\phi*\bar{\rho}_{t}^{\epsilon}(x)Y_{s}^{\epsilon}-\phi*\bar{\rho}_{t_{k}}^{\epsilon}(x)\hat{Y}_{s}^{\epsilon})ds\\
&&\quad-\frac{1}{\epsilon}\int_{t_{k}}^{t}e^{-\frac{\gamma(x)}{\epsilon}(t-s)}[(\nabla V(x)+\nabla K*\bar{\rho}_{s}^{\epsilon}(x))\bar{\rho}_{s}^{\epsilon}\nonumber\\
&&\quad\quad\quad\quad\quad\quad\quad\quad\quad\quad\quad-(\nabla V(x)+\nabla K*\bar{\rho}_{t_{k}}^{\epsilon}(x))\bar{\rho}_{t_{k}}^{\epsilon}]ds\nonumber\\
&&-\int_{t_{k}}^{t}e^{-\frac{\gamma(x)}{\epsilon}(t-s)}\nabla_{x}\cdot[\bar{\rho}_{s}^{\epsilon}\mathbb{E}^{x}(\bar{v}_{i}^{\epsilon}(s)\otimes \bar{v}_{i}^{\epsilon}(s))-\bar{\rho}_{t_{k}}^{\epsilon}\mathbb{E}^{x}(\bar{v}_{i}^{\epsilon}(t_{k})\otimes \bar{v}_{i}^{\epsilon}(t_{k}))]ds.\nonumber
\end{eqnarray}
Then for any $\psi\in C_{0}^{\infty}(\mathbb{R}^{d};\mathbb{R}^{d})$,
\begin{eqnarray}
\langle Z_{t}^{\epsilon},\psi\rangle&=&-\frac{1}{\epsilon}\int_{t_{k}}^{t}\langle e^{-\frac{\gamma(x)}{\epsilon}(t-s)}(\phi*\bar{\rho}_{s}^{\epsilon}(x)Y_{s}^{\epsilon}-\phi*\bar{\rho}_{t_{k}}^{\epsilon}(x)\hat{Y}_{s}^{\epsilon}),\psi\rangle ds\nonumber
\end{eqnarray}
\begin{eqnarray}\label{Z}
&&-\frac{1}{\epsilon}\int_{t_{k}}^{t}\big\langle e^{-\frac{\gamma(x)}{\epsilon}(t-s)}[(\nabla V(x)+\nabla K*\bar{\rho}_{s}^{\epsilon}(x))\bar{\rho}_{s}^{\epsilon}\nonumber\\
&&\quad\quad\quad\quad\quad\quad\quad\quad-(\nabla V(x)+\nabla K*\bar{\rho}_{t_{k}}^{\epsilon}(x))\bar{\rho}_{t_{k}}^{\epsilon}],\psi\big\rangle ds\nonumber\\
&&-\int_{t_{k}}^{t}\big\langle e^{-\frac{\gamma(x)}{\epsilon}(t-s)}\nabla_{x}\cdot[\bar{\rho}_{s}^{\epsilon}\mathbb{E}^{x}(\bar{v}_{i}^{\epsilon}(s)\otimes \bar{v}_{i}^{\epsilon}(s))\nonumber\\
&&\quad\quad\quad\quad\quad\quad\quad\quad-\bar{\rho}_{t_{k}}^{\epsilon}\mathbb{E}^{x}(\bar{v}_{i}^{\epsilon}(t_{k})\otimes \bar{v}_{i}^{\epsilon}(t_{k}))],\psi\big\rangle ds\nonumber\\
&\triangleq&\Pi_{1}^{\epsilon}(t)+\Pi_{2}^{\epsilon}(t)+\Pi_{3}^{\epsilon}(t).
\end{eqnarray}
We calculate the three terms respectively. In fact, for $\Pi_{1}^{\epsilon}(t)$, 
by $(\mathbf{H_{3}})$, $(\mathbf{H_{4}})$ and Lemma \ref{YPLR}, 
\begin{eqnarray}\label{equ:3.8}
\Pi_{1}^{\epsilon}(t)&\leq&\frac{1}{\epsilon}\int_{t_{k}}^{t}|\langle e^{-\frac{\gamma(x)}{\epsilon}(t-s)}[\mathbb{E}\phi(x-\bar{x}_{i}^{\epsilon}(s))Y_{s}^{\epsilon}-\mathbb{E}\phi(x-\bar{x}_{i}^{\epsilon}(t_{k}))\hat{Y}_{s}^{\epsilon}],\psi\rangle|ds\nonumber\\
&&+\frac{1}{\epsilon}\int_{t_{k}}^{t}|\langle e^{-\frac{\gamma(x)}{\epsilon}(t-s)}\mathbb{E}\phi(x-\bar{x}_{i}^{\epsilon}(t_{k}))Z_{s}^{\epsilon},\psi\rangle|ds\nonumber\\
&=&\frac{1}{\epsilon}\int_{t_{k}}^{t}\Big|\int_{\mathbb{R}^{d}}e^{-\frac{\gamma(x)}{\epsilon}(t-s)}[\mathbb{E}\phi(x-\bar{x}_{i}^{\epsilon}(s))-\mathbb{E}\phi(x-\bar{x}_{i}^{\epsilon}(t_{k}))]Y_{s}^{\epsilon}(x)\psi(x)dx\Big|ds\nonumber\\
&&+\frac{1}{\epsilon}\int_{t_{k}}^{t}\Big|\int_{\mathbb{R}^{d}}e^{-\frac{\gamma(x)}{\epsilon}(t-s)}\mathbb{E}\phi(x-\bar{x}_{i}^{\epsilon}(t_{k}))Z_{s}^{\epsilon}(x)\psi(x)dx\Big|ds\nonumber\\
&\leq&\frac{2L_{\phi}}{\epsilon}\int_{t_{k}}^{t}e^{-\frac{\lambda_{\gamma}}{\epsilon}(t-s)}|\langle Y_{s}^{\epsilon},\psi\rangle|ds+\frac{L_{\phi}}{\epsilon}\int_{t_{k}}^{t}e^{-\frac{\lambda_{\gamma}}{\epsilon}(t-s)}|\langle Z_{s}^{\epsilon},\psi\rangle|ds\nonumber\\
&\leq&\frac{\delta}{\epsilon}C_{T,\bar{x}_{i,0},\bar{v}_{i,0},L_{V},L_{K},L_{\phi},\lambda_{\gamma},\lambda_{\phi}}\|\psi\|_{Lip}+\frac{L_{\phi}}{\epsilon}\int_{t_{k}}^{t}\sup_{0\leq u\leq s}|\langle Z_{u}^{\epsilon},\psi\rangle|ds.
\end{eqnarray}
Similarly, by $(\mathbf{H_{1}})$-$(\mathbf{H_{4}})$ together with the proof of Lemma \ref{YPLR},
\begin{eqnarray}\label{equ:3.9}
|\Pi_{2}^{\epsilon}(t)|&\leq&\Big|\frac{1}{\epsilon}\int_{t_{k}}^{t}\langle e^{-\frac{\gamma(x)}{\epsilon}(t-s)}(\nabla V(x)+\nabla K*\bar{\rho}_{s}^{\epsilon}(x)))\bar{\rho}_{s}^{\epsilon},\psi\rangle ds\Big|\nonumber\\
&&+\Big|\frac{1}{\epsilon}\int_{t_{k}}^{t}\langle e^{-\frac{\gamma(x)}{\epsilon}(t-s)}(\nabla V(x)+\nabla K*\bar{\rho}_{t_{k}}^{\epsilon}(x))\bar{\rho}_{t_{k}}^{\epsilon},\psi\rangle ds\Big|\nonumber\\
&=&\frac{1}{\epsilon}\Big|\int_{t_{k}}^{t}\mathbb{E}\Big[e^{-\frac{\gamma(\bar{x}_{i}^{\epsilon}(s))}{\epsilon}(t-s)}(\nabla V(\bar{x}_{i}^{\epsilon}(s))+\mathbb{E}^{j}\nabla K(\bar{x}_{i}^{\epsilon}(s)-\bar{x}_{j}^{\epsilon}(s)))\psi(\bar{x}_{i}^{\epsilon}(s))\Big]ds\Big|\nonumber\\
&&+\frac{1}{\epsilon}\Big|\int_{t_{k}}^{t}\mathbb{E}\Big[e^{-\frac{\gamma(\bar{x}_{i}^{\epsilon}(t_{k}))}{\epsilon}(t-s)}(\nabla V(\bar{x}_{i}^{\epsilon}(t_{k}))+\mathbb{E}^{j}\nabla K(\bar{x}_{i}^{\epsilon}(t_{k})-\bar{x}_{j}^{\epsilon}(t_{k})))\psi(\bar{x}_{i}^{\epsilon}(t_{k}))\Big]ds\Big|\nonumber\\
&\leq&\frac{\delta}{\epsilon}C_{T,\bar{x}_{i,0},\bar{v}_{i,0},L_{V},L_{K}}\|\psi\|_{Lip}.
\end{eqnarray}
For $\Pi_{3}^{\epsilon}(t)$, by Lemma \ref{YL},
\begin{eqnarray*}
\Pi_{3}^{\epsilon}(t)&=&-\int_{t_{k}}^{t}\langle e^{-\frac{\gamma(x)}{\epsilon}(t-s)}\nabla_{x}\cdot[\bar{\rho}_{s}^{\epsilon}\mathbb{E}^{x}(\bar{v}_{i}^{\epsilon}(s)\otimes \bar{v}_{i}^{\epsilon}(s))-\bar{\rho}_{t_{k}}^{\epsilon}\mathbb{E}^{x}(\bar{v}_{i}^{\epsilon}(t_{k})\otimes \bar{v}_{i}^{\epsilon}(t_{k}))],\psi\rangle ds\nonumber
\end{eqnarray*}
\begin{eqnarray*}
&=&-\int_{t_{k}}^{t}\langle\nabla_{x}\cdot[\bar{\rho}_{s}^{\epsilon}\mathbb{E}^{x}(\bar{v}_{i}^{\epsilon}(s)\otimes \bar{v}_{i}^{\epsilon}(s))-\bar{\rho}_{t_{k}}^{\epsilon}\mathbb{E}^{x}(\bar{v}_{i}^{\epsilon}(t_{k})\otimes \bar{v}_{i}^{\epsilon}(t_{k}))],e^{-\frac{\gamma(x)^{*}}{\epsilon}(t-s)}\psi\rangle ds\nonumber\\
&=&\int_{t_{k}}^{t}\int_{\mathbb{R}^{d}}{\rm Tr}[(\bar{\rho}_{s}^{\epsilon}\mathbb{E}^{x}(\bar{v}_{i}^{\epsilon}(s)\otimes \bar{v}_{i}^{\epsilon}(s))-\bar{\rho}_{t_{k}}^{\epsilon}\mathbb{E}^{x}(\bar{v}_{i}^{\epsilon}(t_{k})\otimes \bar{v}_{i}^{\epsilon}(t_{k}))){\rm grad}(e^{-\frac{\gamma(x)^{*}}{\epsilon}(t-s)}\psi)]dxds.
\end{eqnarray*}
Let $g(x)=e^{-\frac{\gamma(x)^{*}}{\epsilon}(t-s)}\psi(x)$, then by the chain rules, 
\begin{eqnarray*}
\frac{\partial}{\partial x_{j}}g(x)=\frac{\partial}{\partial x_{j}}(e^{-\frac{\gamma(x)^{*}}{\epsilon}(t-s)})\psi(x)+e^{-\frac{\gamma(x)^{*}}{\epsilon}(t-s)}\frac{\partial}{\partial x_{j}}\psi(x).
\end{eqnarray*}
By the Theorem $3$ in~\cite{HAB},
\begin{eqnarray*}
\frac{\partial}{\partial x_{j}}(e^{-\frac{\gamma(x)^{*}}{\epsilon}(t-s)})=-\frac{t-s}{\epsilon}\int_{0}^{1}e^{-\theta\frac{\gamma(x)^{*}}{\epsilon}(t-s)}\frac{\partial}{\partial x_{j}}\gamma(x)^{*}e^{-(1-\theta)\frac{\gamma(x)^{*}}{\epsilon}(t-s)}d\theta,
\end{eqnarray*}
then by $(\mathbf{H_{3}})$,
\begin{eqnarray*}
\Big\|\frac{\partial}{\partial x_{j}}g(x)\Big\|\leq C_{L_{\gamma},T}\Big(\frac{1}{\epsilon}+1\Big)\|\psi\|_{Lip},
\end{eqnarray*}
and 
\begin{eqnarray}\label{TDU}
\|{\rm grad}~g(x)\|\leq C_{L_{\gamma},T}\Big(\frac{1}{\epsilon}+1\Big)\|\psi\|_{Lip}.
\end{eqnarray}
Furthermore, by Lemma \ref{lem:3.1} as well as (\ref{TDU}),
\begin{eqnarray}\label{Tra1}
&&\Big|\int_{\mathbb{R}^{d}}{\rm Tr}[(\bar{\rho}_{s}^{\epsilon}\mathbb{E}^{x}(\bar{v}_{i}^{\epsilon}(s)\otimes \bar{v}_{i}^{\epsilon}(s)){\rm grad}~g(x)]dx\Big|\nonumber\\
&=&\frac{1}{\epsilon}\Big|{\rm Tr}[\mathbb{E}(\mathbb{E}^{\bar{x}_{i}^{\epsilon}(s)}(\epsilon\bar{v}_{i}^{\epsilon}(s)\otimes \bar{v}_{i}^{\epsilon}(s)){\rm grad}~g(\bar{x}_{i}^{\epsilon}(s))]\Big|\nonumber\\
&=&\frac{1}{\epsilon}\Big|{\rm Tr}[\mathbb{E}(J(\bar{x}_{i}^{\epsilon}(s),\bar{\rho}_{s}^{\epsilon}(x))+\epsilon C(\bar{x}_{i}^{\epsilon}(s),\bar{\rho}_{s}^{\epsilon}(x),s)){\rm grad}~g(\bar{x}_{i}^{\epsilon}(s))]\Big|\nonumber\\
&\leq&C_{\lambda_{\gamma},\lambda_{\phi},L_{\sigma},T}\Big(\frac{1}{\epsilon}+\frac{1}{\epsilon^{2}}\Big)(1+\mathbb{E}\|\bar{x}_{i}^{\epsilon}(s)\|^{2})\|\phi\|_{Lip}\nonumber\\
&\leq&C_{T,\bar{x}_{i,0},\bar{v}_{i,0},\lambda_{\gamma},\lambda_{\phi},L_{\sigma}}\Big(\frac{1}{\epsilon}+\frac{1}{\epsilon^{2}}\Big)\|\phi\|_{Lip}.
\end{eqnarray}
Similarly,
\begin{eqnarray}\label{Tra2}
\Big|\int_{\mathbb{R}^{d}}{\rm Tr}[(\bar{\rho}_{t_{k}}^{\epsilon}\mathbb{E}^{x}(\bar{v}_{i}^{\epsilon}(t_{k})\otimes \bar{v}_{i}^{\epsilon}(t_{k})){\rm grad}~g(x)]dx\Big|\leq C_{T,\bar{x}_{i,0},\bar{v}_{i,0},\lambda_{\gamma},\lambda_{\phi},L_{\sigma}}\Big(\frac{1}{\epsilon}+\frac{1}{\epsilon^{2}}\Big)\|\phi\|_{Lip}.\nonumber\\
\end{eqnarray}
Combining (\ref{Tra1}) and (\ref{Tra2}),
\begin{eqnarray}\label{Tra3}
|\Pi_{3}^{\epsilon}(t)|\leq C_{T,\bar{x}_{i,0},\bar{v}_{i,0},\lambda_{\gamma},\lambda_{\phi},L_{\sigma}}\Big(\frac{\delta}{\epsilon}+\frac{\delta}{\epsilon^{2}}\Big)\|\phi\|_{Lip}.
\end{eqnarray}
Thus, by (\ref{equ:3.8}), (\ref{equ:3.9}) and (\ref{Tra3}),
\begin{eqnarray*}
\sup_{0\leq t\leq T}|\langle Z_{t}^{\epsilon},\psi\rangle|&\leq& \frac{\delta}{\epsilon}C_{T,\bar{x}_{i,0},\bar{v}_{i,0},L_{V},L_{K},L_{\phi},\lambda_{\gamma},\lambda_{\phi}}\|\psi\|_{Lip}+\frac{L_{\phi}}{\epsilon}\int_{t_{k}}^{t}\sup_{0\leq u\leq s}|\langle Z_{u}^{\epsilon},\psi\rangle|ds\\
&&+C_{T,\bar{x}_{i,0},\bar{v}_{i,0},\lambda_{\gamma},\lambda_{\phi},L_{\sigma}}\Big(\frac{\delta}{\epsilon}+\frac{\delta}{\epsilon^{2}}\Big)\|\phi\|_{Lip},
\end{eqnarray*}
and Gronwall's inequality yields
\begin{eqnarray*}
\sup_{0\leq t\leq T}|\langle Z_{t}^{\epsilon},\psi\rangle|&\leq&C_{T,\bar{x}_{i,0},\bar{v}_{i,0},\lambda_{\gamma},\lambda_{\phi},L_{\sigma},L_{V},L_{K}}\Big(\frac{\delta}{\epsilon}+\frac{\delta}{\epsilon^{2}}\Big)e^{C_{L_{\phi}}\frac{\delta}{\epsilon}}\|\phi\|_{Lip}.
\end{eqnarray*}
\end{proof}

Next we determine the limit of $\hat{Y}_{t}^{\epsilon}$ as $\epsilon\to0$. For this, we introduce the following equation
\begin{eqnarray}\label{equ:3.38}
\partial_{t}\tilde{Y}_{t}^{\epsilon}&=&-\frac{\gamma(x)+\phi*\bar{\rho}(x)}{\epsilon}\tilde{Y}_{t}^{\epsilon}-\frac{\nabla V(x)+\nabla K*\bar{\rho}(x)}{\epsilon}\bar{\rho}(x)\nonumber\\
&&-\nabla_{x}\cdot[\bar{\rho}(x)\mathbb{E}^{x}(\bar{v}_{i}^{\epsilon}(\tau)\otimes \bar{v}_{i}^{\epsilon}(\tau))],
\end{eqnarray}
where $\bar{\rho}\in\mathcal{P}_{G}$, which is the space consisting of Gaussian measure, $\tau>0$. Then we have the following estimate.
\begin{lemma}\label{YHY}
Under the assumption $(\mathbf{H_{1}})$-$(\mathbf{H_{4}})$, for any $x\in\mathbb{R}^{d}$ and fixed $ \bar{\rho}_{t}^{\epsilon}=\bar{\rho}\in\mathcal{P}_{G}$, $t_{*}>0$, $t_{*}\leq t\leq T$, $\psi\in C_{0}^{2}(\mathbb{R}^{d};\mathbb{R}^{d})$,
\begin{eqnarray*}
|\langle\tilde{Y}_{t}^{\epsilon}-Y^{*},\psi\rangle|&\leq&\epsilon C_{t_{*},L_{V},L_{K},L_{\sigma},\lambda_{\gamma},\lambda_{\phi},\bar{x}_{i,0},\bar{v}_{i,0}}\Big(1+\int_{\mathbb{R}^{d}}\|x\|^{2}\bar{\rho}(x)dx\Big)\|\psi\|_{Lip},
\end{eqnarray*}
where 
\begin{eqnarray}\label{EY*}
Y^{*}(x)&=&-(\gamma(x)+\phi*\bar{\rho}(x))^{-1}(\nabla V(x)+\nabla K*\bar{\rho}(x))\bar{\rho}(x)\nonumber\\
&&-(\gamma(x)+\phi*\bar{\rho}(x))^{-1}\nabla_{x}\cdot[\bar{\rho}(x)J(x,\bar{\rho}(x))].
\end{eqnarray}
\end{lemma}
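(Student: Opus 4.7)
\medskip

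\noindent\textbf{Proof proposal for Lemma \ref{YHY}.} My plan is to exploit the fact that, for fixed $x$, equation (\ref{equ:3.38}) is simply a linear first-order ODE in $t$ with constant (in $t$) coefficient $A(x):=\gamma(x)+\phi*\bar{\rho}(x)$, so the variation-of-constants formula (Lemma \ref{YT}) gives an explicit representation that can be compared term-by-term with $Y^{*}$. Write the inhomogeneity as
\begin{equation*}
G^{\epsilon}(x,\tau)=-\tfrac{1}{\epsilon}F(x)\bar{\rho}(x)-\nabla_{x}\cdot[\bar{\rho}(x)\mathbb{E}^{x}(\bar{v}_{i}^{\epsilon}(\tau)\otimes\bar{v}_{i}^{\epsilon}(\tau))],\qquad F(x):=\nabla V(x)+\nabla K*\bar{\rho}(x),
\end{equation*}
and use Lemma \ref{lem:3.1} to substitute $\mathbb{E}^{x}(\bar{v}_{i}^{\epsilon}(\tau)\otimes\bar{v}_{i}^{\epsilon}(\tau))=\tfrac{1}{\epsilon}J(x,\bar{\rho})+C(x,\bar{\rho},\tau)$, so that $G^{\epsilon}=-\tfrac{1}{\epsilon}\bigl[F(x)\bar{\rho}(x)+\nabla_{x}\cdot(\bar{\rho}(x)J(x,\bar{\rho}))\bigr]-\nabla_{x}\cdot[\bar{\rho}(x)C(x,\bar{\rho},\tau)]$. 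Because $\tau$ is fixed, $G^{\epsilon}$ does not depend on $s$, and Lemma \ref{YT} yields
\begin{equation*}
\tilde{Y}_{t}^{\epsilon}(x)=e^{-A(x)t/\epsilon}\tilde{Y}_{0}^{\epsilon}(x)+\int_{0}^{t}e^{-A(x)(t-s)/\epsilon}G^{\epsilon}(x,\tau)\,ds,
\end{equation*}
and the elementary identity $\int_{0}^{t}e^{-A(x)(t-s)/\epsilon}ds=\epsilon A(x)^{-1}[I-e^{-A(x)t/\epsilon}]$ converts this into a closed form.

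After collecting the three resulting pieces I would recognise that the non-decaying part of the first $\tfrac{1}{\epsilon}$ integral is exactly $-A(x)^{-1}[F(x)\bar{\rho}(x)+\nabla_{x}\cdot(\bar{\rho}(x)J(x,\bar{\rho}))]=Y^{*}(x)$, so the difference telescopes to
\begin{equation*}
\tilde{Y}_{t}^{\epsilon}(x)-Y^{*}(x)=e^{-A(x)t/\epsilon}\bigl[\tilde{Y}_{0}^{\epsilon}(x)+A(x)^{-1}(F(x)\bar{\rho}(x)+\nabla_{x}\cdot(\bar{\rho}(x)J(x,\bar{\rho})))\bigr]-\epsilon A(x)^{-1}[I-e^{-A(x)t/\epsilon}]\,\nabla_{x}\cdot[\bar{\rho}(x)C(x,\bar{\rho},\tau)].
\end{equation*}
For $t\geq t_{*}$, assumptions $(\mathbf{H_{3}})$--$(\mathbf{H_{4}})$ give $\|e^{-A(x)t/\epsilon}\|\leq e^{-(\lambda_{\gamma}+\lambda_{\phi})t_{*}/\epsilon}$, which decays faster than any power of $\epsilon$ and is therefore $O(\epsilon)$, so the first bracket contributes only an exponentially small error after pairing with $\psi$. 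The remaining $O(\epsilon)$ term will furnish the stated bound once paired with $\psi$.

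For the pairing itself I would integrate by parts to move $\nabla_{x}\cdot$ onto $A(x)^{-*}\psi$, picking up $\|\psi\|_{Lip}$ (plus a bounded contribution from $\nabla(A^{-1})$ using the differentiability of $\gamma$ and $\phi$ from $(\mathbf{H_{3}})$--$(\mathbf{H_{4}})$), and then estimate the remaining integral using the bound $\|C(x,\bar{\rho},\tau)\|\leq C(1+\|x\|^{2}+\int\|y\|\bar{\rho}(y)dy)$ from Lemma \ref{lem:3.1}. Since $\bar{\rho}\in\mathcal{P}_{G}$ is Gaussian, the resulting moment $\int(1+\|x\|^{2})\bar{\rho}(x)dx$ is finite and bounded by $1+\int\|x\|^{2}\bar{\rho}(x)dx$ (up to an absolute constant), producing the claimed inequality.

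The main technical obstacle is keeping track of the $\epsilon$-dependence when differentiating $A(x)^{-1}$ and the matrix exponential in $x$ (analogous to the computation (\ref{TDU}) in the proof of Lemma \ref{Le3}): a naive bound would introduce an extra $1/\epsilon$ factor and spoil the $O(\epsilon)$ estimate. The point is that here the $e^{-A(x)t/\epsilon}$ factor is no longer present (it was cancelled by $Y^{*}$) in the leading $\epsilon$-term, so no differentiation of an exponential with $1/\epsilon$ in the exponent is needed; only the already-differentiable factor $A(x)^{-1}$ gets differentiated, which is harmless. Verifying this cancellation cleanly and handling the residual exponential remainders for $t\geq t_{*}$ is the delicate bookkeeping step.
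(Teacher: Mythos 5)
Your proposal follows essentially the same route as the paper: variation of constants via Lemma \ref{YT}, explicit evaluation of $\int_0^t e^{-A(x)(t-s)/\epsilon}ds=\epsilon A(x)^{-1}[I-e^{-A(x)t/\epsilon}]$ so that the non-decaying part reproduces $Y^{*}$, the substitution $\mathbb{E}^{x}(\epsilon\bar v\otimes\bar v)=J+\epsilon C$ from Lemma \ref{lem:3.1}, the bound $e^{-(\lambda_\gamma+\lambda_\phi)t_*/\epsilon}=O(\epsilon)$ for the remainders, and integration by parts plus Gaussian moments of $\bar\rho$ for the divergence term. The only cosmetic difference is that the paper handles the exponentially decaying remainder of the $J$-term by bounding $\|\nabla_x\cdot(\bar\rho J)\|$ directly (its estimate (\ref{NRJ})) rather than integrating by parts, but both work since the exponential prefactor absorbs any powers of $1/\epsilon$.
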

\begin{proof}
By Lemma \ref{YT} and (\ref{equ:3.38}),
\begin{eqnarray*}
\tilde{Y}_{t}^{\epsilon}(x)&=&e^{-\frac{\gamma(x)+\phi*\bar{\rho}(x)}{\epsilon}t}Y_{0}\nonumber\\
&-&\frac{1}{\epsilon}\int_{0}^{t}e^{-\frac{\gamma(x)+\phi*\bar{\rho}(x)}{\epsilon}(t-s)}(\nabla V(x)+\nabla K*\bar{\rho}(x))\bar{\rho}(x)ds\nonumber\\
&-&\frac{1}{\epsilon}\int_{0}^{t}e^{-\frac{\gamma(x)+\phi*\bar{\rho}(x)}{\epsilon}(t-s)}\nabla_{x}\cdot[\bar{\rho}(x)\mathbb{E}^{x}(\epsilon \bar{v}_{i}^{\epsilon}(\tau)\otimes \bar{v}_{i}^{\epsilon}(\tau))]ds,
\end{eqnarray*}     
then       
\begin{eqnarray}\label{T1}
\langle\tilde{Y}_{t}^{\epsilon},\psi\rangle&=&\big\langle e^{-\frac{\gamma(x)+\phi*\bar{\rho}(x)}{\epsilon}t}Y_{0},\psi\big\rangle\nonumber\\
&&-\frac{1}{\epsilon}\int_{0}^{t}\big\langle e^{-\frac{\gamma(x)+\phi*\bar{\rho}(x)}{\epsilon}(t-s)}(\nabla V(x)+\nabla K*\bar{\rho}(x))\bar{\rho}(x),\psi\big\rangle ds\nonumber\\
&&-\frac{1}{\epsilon}\int_{0}^{t}\big\langle e^{-\frac{\gamma(x)+\phi*\bar{\rho}(x)}{\epsilon}(t-s)}\nabla_{x}\cdot[\bar{\rho}(x)\mathbb{E}^{x}(\epsilon \bar{v}_{i}^{\epsilon}(\tau)\otimes \bar{v}_{i}^{\epsilon}(\tau))],\psi\big\rangle ds\nonumber\\
&\triangleq&{\rm III}_{1}^{\epsilon}(t)+{\rm III}_{2}^{\epsilon}(t)+{\rm III}_{3}^{\epsilon}(t).
\end{eqnarray}
For any fixed $t_{*}>0$, by $(\mathbf{H_{1}})$ and the fact $\sup_{x\geq0}x^{p}e^{-ax}\leq C_{p}a^{-p}$, for any $a>0$, we have
\begin{eqnarray}\label{C0}
|{\rm III}_{1}^{\epsilon}(t)|\leq|\big\langle e^{-\frac{\gamma(x)+\phi*\bar{\rho}(x)}{\epsilon}t}Y_{0},\psi\big\rangle|\leq e^{-\frac{\lambda_{\gamma}+\lambda_{\phi}}{\epsilon}t}|\langle Y_{0},\psi\rangle|\leq \epsilon C_{\lambda_{\gamma},\lambda_{\phi},t_{*}}|\|Y_{0}\||_{L^{1}}\|\phi\|_{Lip}.\nonumber\\
\end{eqnarray}
For ${\rm III}_{2}^{\epsilon}(t)$, direct calculation yields
\begin{eqnarray}\label{F1}
{\rm III}_{2}^{\epsilon}(t)&=&\frac{1}{\epsilon}\int_{t_{k}}^{t}\langle e^{-\frac{\gamma(x)+\phi*\bar{\rho}(x)}{\epsilon}(t-s)}(\nabla V(x)+\nabla K*\bar{\rho}(x))\bar{\rho}(x),\psi\rangle ds\nonumber\\
&=&\langle(\gamma(x)+\phi*\bar{\rho}(x))^{-1}(I_{d\times d}-e^{-\frac{\gamma(x)+\phi*\bar{\rho}(x)}{\epsilon}(t-t_{k})})(\nabla V(x)+\nabla K*\bar{\rho}(x))\bar{\rho}(x),\psi\rangle\nonumber\\
&=&\langle(\gamma(x)+\phi*\bar{\rho}(x))^{-1}(\nabla V(x)+\nabla K*\bar{\rho}(x))\bar{\rho}(x),\psi\rangle\nonumber\\
&&+\langle(\gamma(x)+\phi*\bar{\rho}(x))^{-1}e^{-\frac{\gamma(x)+\phi*\bar{\rho}(x)}{\epsilon}t}(\nabla V(x)+\nabla K*\bar{\rho}(x))\bar{\rho}(x),\psi\rangle,
\end{eqnarray}
Note that by $(\mathbf{H_{1}})$
\begin{eqnarray}\label{VKG}
\int_{\mathbb{R}^{d}}\|(\nabla V(x)+\nabla K*\bar{\rho}(x))\|\bar{\rho}(x)dx&\leq& (L_{V}+2L_{K})\int_{\mathbb{R}^{d}}\|x\|\bar{\rho}(x)dx+\|\nabla K(0)\|\nonumber\\
&\leq&C_{L_{V},L_{K},\|\nabla K(0)\|}\Big(1+\int_{\mathbb{R}^{d}}\|x\|\bar{\rho}(x)dx\Big),\nonumber\\
\end{eqnarray}
then by $(\mathbf{H_{3}})$, $(\mathbf{H_{4}})$,
\begin{eqnarray}\label{GEP}
&&|\langle(\gamma(x)+\phi*\bar{\rho}(x))^{-1}e^{-\frac{\gamma(x)+\phi*\bar{\rho}(x)}{\epsilon}t}(\nabla V(x)+\nabla K*\bar{\rho}(x))\bar{\rho}(x),\psi\rangle|\nonumber\\
&=&\Big|\int_{\mathbb{R}^{d}} (\gamma(x)+\phi*\bar{\rho}(x))^{-1}e^{-\frac{\gamma(x)+\phi*\bar{\rho}(x)}{\epsilon}t}(\nabla V(x)+\nabla K*\bar{\rho}(x))\bar{\rho}(x)\psi(x)dx\Big|\nonumber\\
&\leq&\frac{1}{\lambda_{\gamma}+\lambda_{\phi}}e^{-\frac{\lambda_{\gamma}+\lambda_{\phi}}{\epsilon}t}\|\psi\|_{Lip}\int_{\mathbb{R}^{d}}\|(\nabla V(x)+\nabla K*\bar{\rho}(x))\bar{\rho}(x)\|dx\nonumber\\
&\leq&\epsilon C_{t_{*},\lambda_{\gamma},\lambda_{\phi},L_{V},L_{K},\|\nabla K(0)\|}\Big(1+\int_{\mathbb{R}^{d}}\|x\|\bar{\rho}(x)dx\Big)\|\psi\|_{Lip}.
\end{eqnarray}
For ${\rm III}_{3}^{\epsilon}(t)$, by Lemma \ref{lem:3.1},
\begin{eqnarray}\label{GEP2}
{\rm III}_{3}^{\epsilon}(t)&=&-\langle(\gamma(x)+\phi*\bar{\rho}(x))^{-1}(I-e^{-\frac{\gamma(x)+\phi*\bar{\rho}(x)}{\epsilon}t})\nabla_{x}\cdot[\bar{\rho}(x)\mathbb{E}^{x}(\epsilon \bar{v}_{i}^{\epsilon}(\tau)\otimes \bar{v}_{i}^{\epsilon}(\tau))],\psi\rangle\nonumber\\
&=&-\langle(\gamma(x)+\phi*\bar{\rho}(x))^{-1}(I-e^{-\frac{\gamma(x)+\phi*\bar{\rho}(x)}{\epsilon}t})\nabla_{x}\cdot[\bar{\rho}(x)(J(x,\bar{\rho})+\epsilon C(x,\bar{\rho},\tau))],\psi\rangle\nonumber\\
&=&-\langle(\gamma(x)+\phi*\bar{\rho}(x))^{-1}\nabla_{x}\cdot(\bar{\rho}(x)J(x,\bar{\rho})),\psi\rangle\nonumber\\
&&-\epsilon\langle(\gamma(x)+\phi*\bar{\rho}(x))^{-1}\nabla_{x}\cdot[\bar{\rho}(x)C(x,\bar{\rho},\tau)],\psi\rangle\nonumber\\
&&+\langle(\gamma(x)+\phi*\bar{\rho}(x))^{-1}e^{-\frac{\gamma(x)+\phi*\bar{\rho}(x)}{\epsilon}t}\nabla_{x}\cdot[\bar{\rho}(x)(J(x,\bar{\rho})+\epsilon C(x,\bar{\rho},\tau))],\psi\rangle.
\end{eqnarray}
By the Gaussian property of $\bar{\rho}$ and (\ref{JBU}),
\begin{eqnarray}\label{NRJ}
&&\|\nabla_{x}\cdot(\bar{\rho}(x)J(x,\bar{\rho}))\|\nonumber\\
&\leq&\int_{0}^{\infty}\|\nabla_{x}\cdot(\bar{\rho}(x)e^{-(\gamma(x)+\phi*\bar{\rho}(x))t}\sigma(x)\sigma(x)^{*}e^{-(\gamma(x)+\phi*\bar{\rho}(x))^{*}t})\|dt\nonumber\\
&\leq&C_{L_{\gamma},L_{\sigma}}\int_{0}^{\infty}e^{-2(\lambda_{\gamma}+\lambda_{\phi})t}(1+\|x\|)\bar{\rho}(x)dt\nonumber\\
&\leq&C_{L_{\gamma},L_{\sigma},\lambda_{\gamma},\lambda_{\phi}}(1+\|x\|)\bar{\rho}(x).
\end{eqnarray}
Then by $(\mathbf{H_{3}})$, $(\mathbf{H_{4}})$ and (\ref{NRJ}),
\begin{eqnarray}\label{GEP3}
&&|\langle(\gamma(x)+\phi*\bar{\rho}(x))^{-1}e^{-\frac{\gamma(x)+\phi*\bar{\rho}(x)}{\epsilon}t}\nabla_{x}\cdot[\bar{\rho}(x)(J(x,\bar{\rho})+\epsilon C(x,\bar{\rho},\tau))],\psi\rangle|\nonumber\\
&\leq&\frac{1}{\lambda_{\gamma}+\lambda_{\phi}}e^{-\frac{\lambda_{\gamma}+\lambda_{\phi}}{\epsilon}t}\|\psi\|_{Lip}\int_{\mathbb{R}^{d}}\|\nabla_{x}\cdot(\bar{\rho}(x)J(x,\bar{\rho}))\|dx\nonumber\\
&\leq&\epsilon C_{L_{\gamma},L_{\sigma},\lambda_{\gamma},\lambda_{\phi},t_{*}}\Big(1+\int_{\mathbb{R}^{d}}\|x\|\bar{\rho}(x)dx\Big)\|\psi\|_{Lip},
\end{eqnarray}
and
\begin{eqnarray}\label{GEP4}
&&|\langle(\gamma(x)+\phi*\bar{\rho}(x))^{-1}e^{-\frac{\gamma(x)+\phi*\bar{\rho}(x)}{\epsilon}t}\nabla_{x}\cdot[\bar{\rho}(x)(\epsilon C(x,\bar{\rho},\tau))],\psi\rangle|\nonumber\\
&=&\Big|\int_{\mathbb{R}^{d}}\nabla_{x}\cdot[\bar{\rho}(x)(\epsilon C(x,\bar{\rho},\tau))]e^{-\frac{(\gamma(x)+\phi*\bar{\rho}(x))^{*}}{\epsilon}t}[(\gamma(x)+\phi*\bar{\rho}(x))^{*}]^{-1}\psi(x)dx\Big|\nonumber\\
&=&\Big|\int_{\mathbb{R}^{d}}{\rm Tr}[\epsilon\bar{\rho}(x)C(x,\bar{\rho},\tau){\rm grad}(e^{-\frac{(\gamma(x)+\phi*\bar{\rho}(x))^{*}}{\epsilon}t}[(\gamma(x)+\phi*\bar{\rho}(x))^{*}]^{-1}\psi(x))]dx\Big|\nonumber\\
&\leq&\epsilon C_{L_{V},L_{K},L_{\sigma},\lambda_{\gamma},\lambda_{\phi},\bar{x}_{i,0},\bar{v}_{i,0}}\Big(1+\int_{\mathbb{R}^{d}}\|x\|^{2}\bar{\rho}(x)dx\Big)\|\psi\|_{Lip}.
\end{eqnarray}
Then by (\ref{C0}), (\ref{F1}), (\ref{GEP}), (\ref{GEP2}), (\ref{GEP3}), (\ref{GEP4}),
\begin{eqnarray*}
|\langle \tilde{Y}_{t}^{\epsilon}-Y^{*},\psi\rangle|&\leq&\epsilon C_{\lambda_{\gamma},\lambda_{\phi},t_{*}}|\|Y_{0}\||_{L^{1}}\|\phi\|_{Lip}\nonumber\\
&&+\epsilon C_{t_{*},\lambda_{\gamma},\lambda_{\phi},L_{V},L_{K},\|\nabla K(0)\|}\Big(1+\int_{\mathbb{R}^{d}}\|x\|\bar{\rho}(x)dx\Big)\|\psi\|_{Lip}\nonumber\\
&&+\epsilon C_{L_{V},L_{K},L_{\sigma},\lambda_{\gamma},\lambda_{\phi},\bar{x}_{i,0},\bar{v}_{i,0}}\Big(1+\int_{\mathbb{R}^{d}}\|x\|^{2}\bar{\rho}(x)dx\Big)\|\psi\|_{Lip}\nonumber\\
&\leq&\epsilon C_{t_{*},L_{V},L_{K},L_{\sigma},\lambda_{\gamma},\lambda_{\phi},\bar{x}_{i,0},\bar{v}_{i,0}}\Big(1+\int_{\mathbb{R}^{d}}\|x\|^{2}\bar{\rho}(x)dx\Big)\|\psi\|_{Lip},
\end{eqnarray*}
where 
\begin{eqnarray*}
Y^{*}(x)&=&-(\gamma(x)+\phi*\bar{\rho}(x))^{-1}(\nabla V(x)+\nabla K*\bar{\rho}(x))\bar{\rho}(x)\nonumber\\
&&-(\gamma(x)+\phi*\bar{\rho}(x))^{-1}\nabla_{x}\cdot[\bar{\rho}(x)J(x,\bar{\rho}(x))].
\end{eqnarray*}
\end{proof}
Now we are in the position to prove our main result theorem \ref{main2}. 
Without loss of generality, by the tightness of $\{\bar{x}_{i}^{\epsilon}(t)\}$, we assume $\{\bar{\rho}_{\cdot}^{\epsilon}\}$ converges weakly to $\{\bar{\rho}_{\cdot}\}$ in space $C(0,T;\mathbb{R}^{d})$ as $\epsilon\to0$. (If necessary one can choose a subsequence of $\{\bar{\rho}_{\cdot}^{\epsilon}\}$). Now we determine the equation satisfied by $\bar{\rho}_{\cdot}$. Firstly we rewrite $Y^{*}$ as $Y^{*,\bar{\rho}}$ to emphasize the dependence on $\bar{\rho}$ and $\tilde{Y}_{t}^{\epsilon,\bar{\rho},\tau}$ as the solution to (\ref{equ:3.38}). Then on the interval $[t_{k},t_{k+1}]$, $\{\hat{Y}_{t}^{\epsilon}\}=\{\tilde{Y}_{t}^{\epsilon,\bar{\rho}_{t_{k}}^{\epsilon},t_{k}}\}$. Now, let $\psi=\nabla_{x}\phi,\phi\in C_{0}^{\infty}(\mathbb{R}^{d})$, then
\begin{eqnarray}
\langle\bar{\rho}_{t}^{\epsilon},\phi\rangle&=&\langle\bar{\rho}_{t_{*}}^{\epsilon},\phi\rangle+\int_{t_{*}}^{t}\langle Y_{s}^{\epsilon},\psi\rangle ds=\langle\bar{\rho}_{t_{*}}^{\epsilon},\phi\rangle+\int_{t_{*}}^{t}\langle Y^{*, \bar{\rho}_{s}^{\epsilon}},\psi\rangle ds\nonumber\\
&&+\int_{t_{*}}^{t}\langle Y_{s}^{\epsilon}-\hat{Y}_{s}^{\epsilon},\psi\rangle ds+\int_{t_{*}}^{t}\langle \hat{Y}_{s}^{\epsilon}-Y^{*, \bar{\rho}_{s}^{\epsilon}},\psi\rangle ds\nonumber\\
&=&\langle\bar{\rho}_{t_{*}}^{\epsilon},\phi\rangle+K_{1}+K_{2}+K_{3}.
\end{eqnarray}
 Firstly, by the definition of $Y^{*,\bar{\rho}}$ in (\ref{EY*}), and the tightness of $\{\bar{x}_{i}^{\epsilon}(t)\}$ in space $C(0,T;\mathbb{R}^{d})$, for any sequence of $\{\bar{x}_{i}^{\epsilon}(t)\}$, there exists a subsequence denoted still by $\{\bar{x}_{i}^{\epsilon}(t)\}$ such that $\{\bar{x}_{i}^{\epsilon}(t)\}$ converges in distribution to $x(t)$, $\epsilon\to0$. By Skorohod theorem, we still can construct a new probability space and random variables without changing the distribution such that (here we don't changing the notations) $\{\bar{x}_{i}^{\epsilon}(t)\}$ converges almost surely to $x(t)$. Then by assumptions $(\mathbf{H_{1}})$-$(\mathbf{H_{4}})$ and the dominate convergence theorem, we obtain
\begin{eqnarray}\label{BR}
\langle\bar{\rho}_{t_{*}}^{\epsilon},\phi\rangle\to\langle\bar{\rho}_{t_{*}},\phi\rangle,
\end{eqnarray} 
and
\begin{eqnarray}\label{F4}
K_{1}=\int_{t_{*}}^{t}\langle Y^{*, \bar{\rho}_{s}^{\epsilon}},\psi\rangle ds&\to&\int_{t_{*}}^{t}\langle Y^{*,\bar{\rho}_{s}},\psi\rangle ds.
\end{eqnarray}
Further, by Lemma \ref{Le3},
\begin{eqnarray}\label{L1}
|K_{2}|=\Big|\int_{t_{*}}^{t}\langle Y_{s}^{\epsilon}-\hat{Y}_{s}^{\epsilon},\psi\rangle ds\Big|&\leq& C_{T,\bar{x}_{i,0},\bar{v}_{i,0},\lambda_{\gamma},\lambda_{\phi},L_{\sigma},L_{K},L_{V}}\Big(\frac{\delta}{\epsilon}+\frac{\delta}{\epsilon^{2}}\Big)\|\psi\|_{Lip}.\nonumber\\
\end{eqnarray}
Note that, let $t_{*}\in [t_{i_{0}},t_{i_{0}+1}]$,
\begin{eqnarray}
K_{3}&=&\int_{t_{*}}^{t_{i_{0}+1}}\langle \tilde{Y}_{s}^{\epsilon, \bar{\rho}_{t_{i_{0}}}^{\epsilon},t_{i_{0}}}-Y^{*, \bar{\rho}_{t_{i_{0}}}^{\epsilon}}, \psi\rangle ds+\sum_{k=i_{0}+1}^{\lfloor \frac{t}{\delta}\rfloor-1}\int_{t_{k}}^{t_{k+1}}\langle \tilde{Y}_{t}^{\epsilon, \bar{\rho}_{t_{k}}^{\epsilon},t_{k}}-Y^{*, \bar{\rho}_{t_{k}}^{\epsilon}}, \psi\rangle ds\nonumber\\
&&+\int_{t_{\lfloor \frac{t}{\delta}\rfloor}}^{t}\langle\tilde{Y}_{s}^{\epsilon, \bar{\rho}_{t_{\lfloor \frac{t}{\delta}\rfloor}}^{\epsilon},t_{\lfloor \frac{t}{\delta}\rfloor}}-Y^{*, \bar{\rho}_{t_{\lfloor \frac{t}{\delta}\rfloor}}^{\epsilon}},\psi\rangle ds
+\int_{t_{*}}^{t_{i_{0}+1}}\langle Y^{*, \bar{\rho}_{t_{i_{0}}}^{\epsilon}}-Y^{*, \bar{\rho}_{s}^{\epsilon}}, \psi\rangle ds\nonumber
\end{eqnarray}
\begin{eqnarray}\label{W1}
&&+\sum_{k=i_{0}+1}^{\lfloor \frac{t}{\delta}\rfloor-1}\int_{t_{k}}^{t_{k+1}}\langle Y^{*, \bar{\rho}_{t_{k}}^{\epsilon}}-Y^{*, \bar{\rho}_{s}^{\epsilon}},\psi\rangle ds+\int_{t_{\lfloor \frac{t}{\delta}\rfloor}}^{t}\langle Y^{*, \bar{\rho}_{t_{\lfloor \frac{t}{\delta}\rfloor}}^{\epsilon}}-Y^{*, \bar{\rho}_{s}^{\epsilon}},\psi\rangle ds\nonumber\\
&\triangleq&\sum_{j=1}^{6}K_{3,j}.
\end{eqnarray}
By Lemma \ref{YHY} and Lemma \ref{SUB},
\begin{eqnarray}\label{K123}
&&|K_{3,1}+K_{3,2}+K_{3,3}|\nonumber\\
&\leq&\int_{t_{*}}^{t_{i_{0}+1}}\langle \tilde{Y}_{s}^{\epsilon, \bar{\rho}_{t_{i_{0}}}^{\epsilon},t_{i_{0}}}-Y^{*, \bar{\rho}_{t_{i_{0}}}^{\epsilon}}, \psi\rangle ds+\sum_{k=i_{0}+1}^{\lfloor \frac{t}{\delta}\rfloor}\int_{t_{k}}^{t_{k+1}}\langle \tilde{Y}_{t}^{\epsilon, \bar{\rho}_{t_{k}}^{\epsilon},t_{k}}-Y^{*, \bar{\rho}_{t_{k}}^{\epsilon}}, \psi\rangle ds\nonumber\\
&\leq&\epsilon C_{T,\bar{x}_{i,0},\bar{v}_{i,0},t_{*},L_{V},L_{K},L_{\sigma},\lambda_{\gamma},\lambda_{\phi}}\Big(1+\int_{\mathbb{R}^{d}}\|x\|^{2}\bar{\rho}_{t_{i_{0}}}^{\epsilon}(x)dx\Big)\|\psi\|_{Lip}\nonumber\\
&\leq&\epsilon C_{T,\bar{x}_{i,0},\bar{v}_{i,0},t_{*},L_{V},L_{K},L_{\sigma},\lambda_{\gamma},\lambda_{\phi}}\|\psi\|_{Lip}.
\end{eqnarray}
By (\ref{EY*}),
\begin{eqnarray}\label{K123}
|\langle Y^{*, \bar{\rho}_{t_{k}}^{\epsilon}}-Y^{*, \bar{\rho}_{s}^{\epsilon}},\psi\rangle|&\leq&|\langle(\gamma(x)+\phi*\bar{\rho}_{t_{k}}^{\epsilon}(x))^{-1}(\nabla V(x)+\nabla K*\bar{\rho}_{t_{k}}^{\epsilon}(x))\bar{\rho}_{t_{k}}^{\epsilon}(x),\psi\rangle\nonumber\\
&&-\langle(\gamma(x)+\phi*\bar{\rho}_{s}^{\epsilon}(x))^{-1}(\nabla V(x)+\nabla K*\bar{\rho}_{s}^{\epsilon}(x))\bar{\rho}_{s}^{\epsilon}(x),\psi\rangle|\nonumber\\
&&+|\langle(\gamma(x)+\phi*\bar{\rho}_{t_{k}}^{\epsilon}(x))^{-1}\nabla_{x}\cdot[\bar{\rho}_{t_{k}}^{\epsilon}(x)J(x,\bar{\rho}_{t_{k}}^{\epsilon}(x))],\psi\rangle\nonumber\\
&&-\langle(\gamma(x)+\phi*\bar{\rho}_{s}^{\epsilon}(x))^{-1}\nabla_{x}\cdot[\bar{\rho}_{s}^{\epsilon}(x)J(x,\bar{\rho}_{s}^{\epsilon}(x))],\psi\rangle\nonumber\\
&\triangleq&L_{1}^{\epsilon}(s)+L_{2}^{\epsilon}(s).
\end{eqnarray}
Direct computations yield
\begin{eqnarray}\label{L123}
L_{1}^{\epsilon}(s)&\leq&|\langle[(\gamma(x)+\phi*\bar{\rho}_{t_{k}}^{\epsilon}(x))^{-1}-(\gamma(x)+\phi*\bar{\rho}_{s}^{\epsilon}(x))^{-1}](\nabla V(x)+\nabla K*\bar{\rho}_{t_{k}}^{\epsilon}(x))\bar{\rho}_{t_{k}}^{\epsilon}(x),\psi\rangle|\nonumber\\
&&+|\langle(\gamma(x)+\phi*\bar{\rho}_{s}^{\epsilon}(x))^{-1}[\nabla K*\bar{\rho}_{t_{k}}^{\epsilon}(x)-\nabla K*\bar{\rho}_{s}^{\epsilon}(x)]\bar{\rho}_{t_{k}}^{\epsilon}(x),\psi\rangle|\nonumber\\
&&+|\langle(\gamma(x)+\phi*\bar{\rho}_{s}^{\epsilon}(x))^{-1}(\nabla V(x)+\nabla K*\bar{\rho}_{s}^{\epsilon}(x))(\bar{\rho}_{t_{k}}^{\epsilon}(x)-\bar{\rho}_{s}^{\epsilon}(x)),\psi\rangle|\nonumber\\
&=&L_{1,1}^{\epsilon}(s)+L_{1,2}^{\epsilon}(s)+L_{1,3}^{\epsilon}(s).
\end{eqnarray}
For $L_{1,1}^{\epsilon}(s)$, by the Theorem $3$ in~\cite[p177]{MN}, let $A(t)=\gamma(x)+\phi*\bar{\rho}_{t}^{\epsilon}(x)$, then
\begin{eqnarray*}
\frac{dA(t)^{-1}}{dt}=-A(t)^{-1}\frac{dA(t)}{dt}A(t)^{-1}.
\end{eqnarray*}
By the mean value theorem,
\begin{eqnarray*}
A(t_{k})^{-1}-A(s)^{-1}=\frac{dA(u)^{-1}}{dt}(t_{k}-s)=-A(u)^{-1}\frac{dA(u)}{dt}A(u)^{-1}(t_{k}-s),\quad u\in(t_{k},s),
\end{eqnarray*}
that is,
\begin{eqnarray*}
&&(\gamma(x)+\phi*\bar{\rho}_{t_{k}}^{\epsilon}(x))^{-1}-(\gamma(x)+\phi*\bar{\rho}_{s}^{\epsilon}(x))^{-1}\nonumber\\
&=&(\gamma(x)+\phi*\bar{\rho}_{u}^{\epsilon}(x))^{-1}\mathbb{E}^{j}[\phi'(x-\bar{x}_{j}^{\epsilon}(u))(s-t_{k})\bar{v}_{j}^{\epsilon}(u)](\gamma(x)+\phi*\bar{\rho}_{t_{k}}^{\epsilon}(x))^{-1}.
\end{eqnarray*}
Then by (\ref{VKG}), $(\mathbf{H_{3}})$, $(\mathbf{H_{4}})$, Lemma \ref{EWJU} and Lemma \ref{SUB},
\begin{eqnarray}\label{L11}
L_{1,1}^{\epsilon}(s)&\leq&\|\psi\|_{Lip} \int_{\mathbb{R}^{d}}\|(\gamma(x)+\phi*\bar{\rho}_{u}^{\epsilon}(x))^{-1}\|^{2}\|\mathbb{E}^{j}[\phi'(x-\bar{x}_{j}^{\epsilon}(u))(s-t_{k})\bar{v}_{j}^{\epsilon}(u)]\|\nonumber\\
&&\cdot\|\nabla V(x)+\nabla K*\bar{\rho}_{t_{k}}^{\epsilon}(x)\|\bar{\rho}_{t_{k}}^{\epsilon}(x)dx\nonumber\\
&\leq&\frac{\delta}{\epsilon}C_{T,\bar{x}_{i,0},\bar{v}_{i,0},t_{*},L_{V},L_{K},L_{\sigma},\lambda_{\gamma},\lambda_{\phi}}\|\psi\|_{Lip}.
\end{eqnarray}
For $L_{1,2}^{\epsilon}(s)$, by $(\mathbf{H_{2}})$-$(\mathbf{H_{4}})$ and Lemma \ref{SC},
\begin{eqnarray}\label{L12Z}
&&L_{1,2}^{\epsilon}(s)\nonumber\\
&=&\Big|\int_{\mathbb{R}^{d}}(\gamma(x)+\phi*\bar{\rho}_{s}^{\epsilon}(x))^{-1}\mathbb{E}[\nabla K(x-\bar{x}_{i}^{\epsilon}(t_{k}))-\nabla K(x-\bar{x}_{i}^{\epsilon}(s))]\bar{\rho}_{t_{k}}^{\epsilon}(x)\psi(x)dx\Big|\nonumber\\
&\leq&\frac{L_{K}}{\lambda_{\gamma}+\lambda_{\phi}}\mathbb{E}\|\bar{x}_{i}^{\epsilon}(t_{k})-\bar{x}_{i}^{\epsilon}(s)\|\|\psi\|_{Lip}\nonumber\\
&\leq&\sqrt{\delta}C_{T,\bar{x}_{i,0},\bar{v}_{i,0},t_{*},L_{V},L_{K},L_{\sigma},\lambda_{\gamma},\lambda_{\phi}}\|\psi\|_{Lip}.
\end{eqnarray}
For $L_{1,3}^{\epsilon}(s)$, we have
\begin{eqnarray}\label{L13Z}
&&L_{1,3}^{\epsilon}(s)\nonumber\\
&=&\Big|\int_{\mathbb{R}^{d}}(\gamma(x)+\phi*\bar{\rho}_{s}^{\epsilon}(x))^{-1}(\nabla V(x)+\nabla K*\bar{\rho}_{s}^{\epsilon}(x))(\bar{\rho}_{t_{k}}^{\epsilon}(x)-\bar{\rho}_{s}^{\epsilon}(x))\psi(x)dx\Big|\nonumber\\
&=&|\mathbb{E}[(\gamma(\bar{x}_{i}^{\epsilon}(t_{k}))+\phi*\bar{\rho}_{s}^{\epsilon}(\bar{x}_{i}^{\epsilon}(t_{k})))^{-1}(\nabla V(\bar{x}_{i}^{\epsilon}(t_{k}))+\nabla K*\bar{\rho}_{s}^{\epsilon}(\bar{x}_{i}^{\epsilon}(t_{k})))\psi(\bar{x}_{i}^{\epsilon}(t_{k}))]\nonumber\\
&&-\mathbb{E}[(\gamma(\bar{x}_{i}^{\epsilon}(s))+\phi*\bar{\rho}_{s}^{\epsilon}(\bar{x}_{i}^{\epsilon}(s)))^{-1}(\nabla V(\bar{x}_{i}^{\epsilon}(s))+\nabla K*\bar{\rho}_{s}^{\epsilon}(\bar{x}_{i}^{\epsilon}(s)))\psi(\bar{x}_{i}^{\epsilon}(s))]|\nonumber\\
&\leq&|\mathbb{E}(\gamma(\bar{x}_{i}^{\epsilon}(t_{k}))+\phi*\bar{\rho}_{s}^{\epsilon}(\bar{x}_{i}^{\epsilon}(t_{k})))^{-1}(\nabla V(\bar{x}_{i}^{\epsilon}(t_{k}))+\nabla K*\bar{\rho}_{s}^{\epsilon}(\bar{x}_{i}^{\epsilon}(t_{k})))\psi(\bar{x}_{i}^{\epsilon}(t_{k}))\nonumber\\
&&-\mathbb{E}(\gamma(\bar{x}_{i}^{\epsilon}(s))+\phi*\bar{\rho}_{s}^{\epsilon}(\bar{x}_{i}^{\epsilon}(s)))^{-1}(\nabla V(\bar{x}_{i}^{\epsilon}(t_{k}))+\nabla K*\bar{\rho}_{s}^{\epsilon}(\bar{x}_{i}^{\epsilon}(t_{k})))\psi(\bar{x}_{i}^{\epsilon}(t_{k}))|\nonumber\\
&&+|\mathbb{E}(\gamma(\bar{x}_{i}^{\epsilon}(s))+\phi*\bar{\rho}_{s}^{\epsilon}(\bar{x}_{i}^{\epsilon}(s)))^{-1}(\nabla V(\bar{x}_{i}^{\epsilon}(t_{k}))+\nabla K*\bar{\rho}_{s}^{\epsilon}(\bar{x}_{i}^{\epsilon}(t_{k})))\psi(\bar{x}_{i}^{\epsilon}(t_{k}))\nonumber\\
&&-\mathbb{E}(\gamma(\bar{x}_{i}^{\epsilon}(s))+\phi*\bar{\rho}_{s}^{\epsilon}(\bar{x}_{i}^{\epsilon}(s)))^{-1}(\nabla V(\bar{x}_{i}^{\epsilon}(s))+\nabla K*\bar{\rho}_{s}^{\epsilon}(\bar{x}_{i}^{\epsilon}(s)))\psi(\bar{x}_{i}^{\epsilon}(t_{k}))|\nonumber\\
&&+|\mathbb{E}(\gamma(\bar{x}_{i}^{\epsilon}(s))+\phi*\bar{\rho}_{s}^{\epsilon}(\bar{x}_{i}^{\epsilon}(s)))^{-1}(\nabla V(\bar{x}_{i}^{\epsilon}(s))+\nabla K*\bar{\rho}_{s}^{\epsilon}(\bar{x}_{i}^{\epsilon}(s)))\psi(\bar{x}_{i}^{\epsilon}(t_{k}))\nonumber\\
&&-\mathbb{E}(\gamma(\bar{x}_{i}^{\epsilon}(s))+\phi*\bar{\rho}_{s}^{\epsilon}(\bar{x}_{i}^{\epsilon}(s)))^{-1}(\nabla V(\bar{x}_{i}^{\epsilon}(s))+\nabla K*\bar{\rho}_{s}^{\epsilon}(\bar{x}_{i}^{\epsilon}(s)))\psi(\bar{x}_{i}^{\epsilon}(s))|\nonumber\\
&\triangleq&L_{1,3,1}^{\epsilon}(s)+L_{1,3,2}^{\epsilon}(s)+L_{1,3,3}^{\epsilon}(s).
\end{eqnarray}
For $L_{1,3,1}^{\epsilon}(s)$, let $A(t)=\gamma(\bar{x}_{i}^{\epsilon}(t))+\phi*\bar{\rho}_{s}^{\epsilon}(\bar{x}_{i}^{\epsilon}(t))$, then carrying out the same proof as $L_{1,1}^{\epsilon}(s)$ and together with Lemma \ref{EWJU},
\begin{eqnarray}
L_{1,3,1}^{\epsilon}(s)\leq \frac{\delta}{\sqrt{\epsilon}}C_{T,\bar{x}_{i,0},\bar{v}_{i,0},t_{*},L_{V},L_{K},L_{\sigma},\lambda_{\gamma},\lambda_{\phi}}\|\psi\|_{Lip}.
\end{eqnarray}
For $L_{1,3,2}^{\epsilon}(s)$, by $(\mathbf{H_{1}})$-$(\mathbf{H_{4}})$ and Lemma \ref{SC},
\begin{eqnarray}
&&L_{1,3,2}^{\epsilon}(s)\nonumber\\
&\leq&\frac{\|\psi\|_{Lip}}{\lambda_{\gamma}+\lambda_{\phi}}\mathbb{E}[\|\nabla V(\bar{x}_{i}^{\epsilon}(t_{k}))-\nabla V(\bar{x}_{i}^{\epsilon}(s))\|+\|\nabla K*\bar{\rho}_{s}^{\epsilon}(\bar{x}_{i}^{\epsilon}(t_{k}))-\nabla K*\bar{\rho}_{s}^{\epsilon}(\bar{x}_{i}^{\epsilon}(s))\|]\nonumber\\
&\leq&C_{\lambda_{\gamma},\lambda_{\phi},L_{V},L_{K}}\|\psi\|_{Lip}\sqrt{\delta}.
\end{eqnarray}
Similarly, for $L_{1,3,3}^{\epsilon}(s)$,
\begin{eqnarray}\label{L133}
L_{1,3,3}^{\epsilon}(s)\leq C_{T,\bar{x}_{i,0},\bar{v}_{i,0},t_{*},\lambda_{\gamma},\lambda_{\phi},L_{V},L_{K}}\|\psi\|_{Lip}\sqrt{\delta}.
\end{eqnarray}
Thus, by (\ref{L13Z})-(\ref{L133}),
\begin{eqnarray}\label{L13ZZ}
L_{1,3}^{\epsilon}(s)\leq\Big(\sqrt{\epsilon}+\frac{\delta}{\sqrt{\epsilon}}\Big)C_{T,\bar{x}_{i,0},\bar{v}_{i,0},t_{*},\lambda_{\gamma},\lambda_{\phi},L_{V},L_{K}}\|\psi\|_{Lip}.
\end{eqnarray}
Furthermore, by (\ref{L123}), (\ref{L11}), (\ref{L12Z}) and (\ref{L13ZZ}),
\begin{eqnarray}\label{L1ZZZ}
L_{1}^{\epsilon}(s)\leq\Big(\sqrt{\epsilon}+\frac{\delta}{\sqrt{\epsilon}}+\frac{\delta}{\epsilon}\Big)C_{T,\bar{x}_{i,0},\bar{v}_{i,0},t_{*},\lambda_{\gamma},\lambda_{\phi},L_{V},L_{K}}\|\psi\|_{Lip}.
\end{eqnarray}
For $L_{2}^{\epsilon}(s)$,
\begin{eqnarray}\label{L2Z}
L_{2}^{\epsilon}(s)&=&|\langle(\gamma(x)+\phi*\bar{\rho}_{t_{k}}^{\epsilon}(x))^{-1}\nabla_{x}\cdot[\bar{\rho}_{t_{k}}^{\epsilon}(x)J(x,\bar{\rho}_{t_{k}}^{\epsilon}(x))],\psi\rangle\nonumber\\
&&-\langle[(\gamma(x)+\phi*\bar{\rho}_{s}^{\epsilon}(x))^{-1}\nabla_{x}\cdot[\bar{\rho}_{s}^{\epsilon}(x)J(x,\bar{\rho}_{s}^{\epsilon}(x))],\psi\rangle|\nonumber\\
&\leq&|\langle[(\gamma(x)+\phi*\bar{\rho}_{t_{k}}^{\epsilon}(x))^{-1}-(\gamma(x)+\phi*\bar{\rho}_{s}^{\epsilon}(x))^{-1}]\nabla_{x}\cdot[\bar{\rho}_{t_{k}}^{\epsilon}(x)J(x,\bar{\rho}_{t_{k}}^{\epsilon}(x))],\psi\rangle\nonumber\\
&&+|\langle(\gamma(x)+\phi*\bar{\rho}_{s}^{\epsilon}(x))^{-1}\nabla_{x}\cdot[\bar{\rho}_{t_{k}}^{\epsilon}(x)J(x,\bar{\rho}_{t_{k}}^{\epsilon}(x))-\bar{\rho}_{s}^{\epsilon}(x)J(x,\bar{\rho}_{s}^{\epsilon}(x))],\psi\rangle|\nonumber\\
&\triangleq&L_{2,1}^{\epsilon}(s)+L_{2,2}^{\epsilon}(s).
\end{eqnarray}
For $L_{2,1}^{\epsilon}(s)$, Similar proof as (\ref{L11}) yields
\begin{eqnarray}\label{L2Z1}
L_{2,1}^{\epsilon}(s)&\leq& \frac{\delta}{\epsilon}C_{\lambda_{\gamma},\lambda_{\phi},L_{V},L_{K},L_{\phi}}\|\psi\|_{Lip}\mathbb{E}\|\sqrt{\epsilon}\bar{v}_{i}^{\epsilon}(t)\|^{2}\int_{\mathbb{R}^{d}}(1+\|x\|)\bar{\rho}_{t_{k}}^{\epsilon}(x)dx\nonumber\\
&\leq&\frac{\delta}{\epsilon}C_{T,\bar{x}_{i,0},\bar{v}_{i,0},\lambda_{\gamma},\lambda_{\phi},L_{V},L_{K},L_{\sigma}}\|\psi\|_{Lip}.
\end{eqnarray}
For $L_{2,2}^{\epsilon}(s)$, by Lemma \ref{YL},
\begin{eqnarray}\label{L22Z}
&&L_{2,2}^{\epsilon}(s)\nonumber\\
&=&\Big|\int_{\mathbb{R}^{d}}\nabla_{x}\cdot[\bar{\rho}_{t_{k}}^{\epsilon}(x)J(x,\bar{\rho}_{t_{k}}^{\epsilon}(x))-\bar{\rho}_{s}^{\epsilon}(x)J(x,\bar{\rho}_{s}^{\epsilon}(x))][(\gamma(x)+\phi*\bar{\rho}_{s}^{\epsilon}(x))^{*}]^{-1}dx\Big|\nonumber\\
&=&\Big|\int_{\mathbb{R}^{d}}{\rm Tr}[(\bar{\rho}_{t_{k}}^{\epsilon}(x)J(x,\bar{\rho}_{t_{k}}^{\epsilon}(x))-\bar{\rho}_{s}^{\epsilon}(x)J(x,\bar{\rho}_{s}^{\epsilon}(x))){\rm grad}(((\gamma(x)+\phi*\bar{\rho}_{s}^{\epsilon}(x))^{*})^{-1}\psi(x))]dx\Big|.\nonumber\\
\end{eqnarray}
Let $g(x)=((\gamma(x)+\phi*\bar{\rho}_{s}^{\epsilon}(x))^{*})^{-1}\psi(x)$, then
\begin{eqnarray*}
\frac{\partial}{\partial x_{j}}g(x)=\frac{\partial}{\partial x_{j}}(((\gamma(x)+\phi*\bar{\rho}_{s}^{\epsilon}(x))^{*})^{-1})\psi(x)+((\gamma(x)+\phi*\bar{\rho}_{s}^{\epsilon}(x))^{*})^{-1}\frac{\partial}{\partial x_{j}}\psi(x).
\end{eqnarray*}
Let $A(x)=(\gamma(x)+\phi*\bar{\rho}_{s}^{\epsilon}(x))^{*}$, then
\begin{eqnarray*}
\frac{dA(t)^{-1}}{dt}=-A(t)^{-1}\frac{dA(t)}{dt}A(t)^{-1},
\end{eqnarray*}
and 
\begin{eqnarray*}
&&\frac{\partial}{\partial x_{j}}(((\gamma(x)+\phi*\bar{\rho}_{s}^{\epsilon}(x))^{*})^{-1})\\
&=&-((\gamma(x)+\phi*\bar{\rho}_{s}^{\epsilon}(x))^{*})^{-1}\Big[\frac{\partial}{\partial x_{j}}\gamma(x)+\frac{\partial}{\partial x_{j}}(\phi*\bar{\rho}_{s}^{\epsilon}(x))\Big]((\gamma(x)+\phi*\bar{\rho}_{s}^{\epsilon}(x))^{*})^{-1}.
\end{eqnarray*}
Then by $(\mathbf{H_{3}})$ and $(\mathbf{H_{4}})$,
\begin{eqnarray*}
\Big\|\frac{\partial}{\partial x_{j}}g(x)\Big\|\leq C_{\lambda_{\gamma},\lambda_{\phi},L_{\gamma},L_{\phi}}\|\psi\|_{Lip},
\end{eqnarray*}
and
\begin{eqnarray*}
\|{\rm grad}~g(x)\|\leq C_{\lambda_{\gamma},\lambda_{\phi},L_{\gamma},L_{\phi}}\|\psi\|_{Lip}.
\end{eqnarray*}
By (\ref{L22Z}), we have
\begin{eqnarray*}
&&\Big|\int_{\mathbb{R}^{d}}{\rm Tr}[(\bar{\rho}_{t_{k}}^{\epsilon}(x)J(x,\bar{\rho}_{t_{k}}^{\epsilon}(x))-\bar{\rho}_{s}^{\epsilon}(x)J(x,\bar{\rho}_{s}^{\epsilon}(x))){\rm grad}~(g(x))]dx\Big|\nonumber\\
&\leq&C_{\lambda_{\gamma},\lambda_{\phi},L_{\gamma},L_{\phi}}\|\psi\|_{Lip}\int_{\mathbb{R}^{d}}\|\bar{\rho}_{t_{k}}^{\epsilon}(x)J(x,\bar{\rho}_{t_{k}}^{\epsilon}(x))-\bar{\rho}_{s}^{\epsilon}(x)J(x,\bar{\rho}_{s}^{\epsilon}(x))\|dx\nonumber\\
&\leq&C_{\lambda_{\gamma},\lambda_{\phi},L_{\gamma},L_{\phi}}\|\psi\|_{Lip}\Big[\int_{\mathbb{R}^{d}}\|(\bar{\rho}_{t_{k}}^{\epsilon}(x)-\bar{\rho}_{s}^{\epsilon}(x))J(x,\bar{\rho}_{t_{k}}^{\epsilon}(x))\|dx\\
&&+\int_{\mathbb{R}^{d}}\|J(x,\bar{\rho}_{t_{k}}^{\epsilon}(x))-J(x,\bar{\rho}_{s}^{\epsilon}(x))\|\bar{\rho}_{s}^{\epsilon}(x)dx\Big]\nonumber\\
&\triangleq&L_{2,2,1}^{\epsilon}(s)+L_{2,2,2}^{\epsilon}(s).
\end{eqnarray*}
For $L_{2,2,1}^{\epsilon}(s)$, by (\ref{JBU}) and Lemma \ref{SC}, we have
\begin{eqnarray}\label{L22Z1}
L_{2,2,1}^{\epsilon}(s)\leq \sqrt{\delta}C_{\lambda_{\gamma},\lambda_{\phi},L_{\sigma},L_{\gamma},L_{\phi}}\|\psi\|_{Lip}.
\end{eqnarray}
Similarly, 
\begin{eqnarray}\label{L22Z2}
L_{2,2,1}^{\epsilon}(s)\leq\sqrt{\delta}C_{\lambda_{\gamma},\lambda_{\phi},L_{\sigma},L_{\gamma},L_{\phi}}\|\psi\|_{Lip}.
\end{eqnarray}
Combining (\ref{L2Z1}), (\ref{L2Z}) with (\ref{L22Z1}) and (\ref{L22Z2}),
\begin{eqnarray}
L_{2}^{\epsilon}(s)\leq\Big(\frac{\delta}{\epsilon}+\sqrt{\delta}\Big)C_{T,\bar{x}_{i,0},\bar{v}_{i,0},\lambda_{\gamma},\lambda_{\phi},L_{V},L_{K},L_{\sigma}}\|\psi\|_{Lip}.
\end{eqnarray}
This together with (\ref{K123}) and (\ref{L1ZZZ}),
\begin{eqnarray}
|\langle Y^{*, \bar{\rho}_{t_{k}}^{\epsilon}}-Y^{*, \bar{\rho}_{s}^{\epsilon}},\psi\rangle|&\leq&\Big(\sqrt{\delta}+\frac{\delta}{\sqrt{\epsilon}}+\frac{\delta}{\epsilon}\Big)C_{T,\bar{x}_{i,0},\bar{v}_{i,0},\lambda_{\gamma},\lambda_{\phi},L_{V},L_{K},L_{\sigma}}\|\psi\|_{Lip}.\nonumber\\
\end{eqnarray}
Furthermore, by (\ref{W1}),
\begin{eqnarray}
|K_{3,4}+K_{3,5}+K_{3,6}|\leq\Big(\sqrt{\delta}+\frac{\delta}{\sqrt{\epsilon}}+\frac{\delta}{\epsilon}\Big)C_{T,\bar{x}_{i,0},\bar{v}_{i,0},\lambda_{\gamma},\lambda_{\phi},L_{V},L_{K},L_{\sigma}}\|\psi\|_{Lip}.\nonumber\\
\end{eqnarray}
This together with (\ref{K123}) further yields
\begin{eqnarray}
K_{3}\leq\Big(\epsilon+\sqrt{\delta}+\frac{\delta}{\sqrt{\epsilon}}+\frac{\delta}{\epsilon}\Big)C_{T,\bar{x}_{i,0},\bar{v}_{i,0},\lambda_{\gamma},\lambda_{\phi},L_{V},L_{K},L_{\sigma}}\|\psi\|_{Lip}.
\end{eqnarray}
Choosing $\delta=\mathcal{O}(\epsilon^{3})$, 
\begin{eqnarray*}
\langle\bar{\rho}_{t}^{\epsilon},\phi\rangle\to\langle\bar{\rho}_{t_{*}},\phi\rangle+\int_{t_{*}}^{t}\langle Y^{*,\bar{\rho}_{s}},\psi\rangle ds.
\end{eqnarray*}
Note that $\psi=\nabla_{x}\phi$, then
\begin{eqnarray*}
\langle\partial_{t}\bar{\rho}_{t},\phi\rangle=\langle Y^{*,\bar{\rho}_{t}},\psi\rangle=-\langle\nabla_{x}\cdot Y^{*,\bar{\rho}_{t}},\phi\rangle,
\end{eqnarray*}
which is the weak form of
\begin{eqnarray*}
\partial_{t}\bar{\rho}_{t}(x)&=&-\nabla\cdot[(\gamma(x)+\phi*\bar{\rho}_{t}(x))^{-1}(\nabla V(x)+\nabla K*\bar{\rho}_{t}(x))\bar{\rho}_{t}(x)\nonumber\\
&&-(\gamma(x)+\phi*\bar{\rho}_{t}(x))^{-1}\nabla_{x}\cdot[\bar{\rho}_{t}(x)J(x,\bar{\rho}_{t}(x))]].
\end{eqnarray*}


\begin{thebibliography}{99}
\small \setlength{\itemsep}{-.8mm}

\bibitem{BC}F. Bolley, J.A. Ca${\rm\tilde{n}}$izo \& J.A. Carrillo, Stochastic mean-field limit: Non-Lipschitz forces and swarming, {\em Math. Models Methods Appl. Sci.}, 21 (11) (2011) 2179-2210.

\bibitem{BE}R. Bellman, {\em Introduction to Matrix Analysis}, Society for Industrial and Applied Mathematics, 1997.

\bibitem{BBH}M. Behr, P. Benner \& J. Heiland, Solution formulas for differential Sylvester and Lyapunov equations, {\em Arxiv: 1811.08327v1}.

\bibitem{CC}J.A. Carrillo, Y.P. Choi, Mean-field limit: From particle descriptions to macroscopic equations, {\em Arch. Ration. Mech. Anal.}, 241 (3) (2021) 1529-1573.

\bibitem{CS}F. Cucker, S. Smale, On the mathematics of emergence, {\em Jpn. J. Math.}, 2 (2007) 197-227.

\bibitem{CT}Y.P. Choi, O. Tse, Quantified overdamped limit for kinetic Vlasov-Fokker-Planck equations with singular interaction forces, {\em J. Differ. Equ.}, 330 (2022) 150-207. 

\bibitem{D}D.A. Dawson, Critical dynamics and fluctuations for a mean-field model of cooperative behaviour, {\em J. Stat. Phys.}, 31 (1) (1983) 29-85.

\bibitem{FS}R.C. Fetecau, W. Sun, First-order aggregation models and zero inertial limits, {\em J. Differ. Equa.}, 259 (2015) 6774-6802.

\bibitem{FST}R.C. Fetecau, W. Sun \& C.H. Tan, First-order aggregation models with alignment, {\em Phys. D}, 325 (2016) 146-163.

\bibitem{G} F. Golse, On the dynamics of large particle systems in the mean field limit, in {\em Macroscopic and Large Scale Phenomena: Coarse Graining, Mean Field Limits and Ergodicity}, Lecture Notes in Applied Mathematics and Mechanics, Vol 3 (Springer, Cham, 2016), pp. 1-144.

\bibitem{HA}P. Hartman, {\em Ordinary Differential Equations}, John Wiley \& Sons, New York, 1964.

\bibitem{HAB}H.E. Haber, Notes on the matrix exponential and logarithm. Available on https://api.semanticscholar.org/CorpusID:202680000

\bibitem{HL}S.Y. Ha, J.G. Liu, a simple proof of the Cucker-Smale flocking dynamics and mean-field limit, {\em Comm. Math. Sci.}, 7 (2009) 297-325.

\bibitem{HMVW}S. Hottovy, A. McDaniel, G. Volpe \& J. Wehr, The Smoluchowski-Kramers limit of stochastic differential equations with arbitrary state-dependent friction, {\em Comm. Math. Phys.}, 336 (2015) 1259-1283. 

\bibitem{Hu}H. Huang, Quantitative estimate of the overdamped limit for the Vlasov-Fokker-Planck systems, {\em Partial Differ. Equ. Appl. Math.}, 4 (2021) 100186.

\bibitem{HT}S.Y. Ha, E. Tadmor, From particle to kinetic and hydrodynamic descriptions of flocking, {\em Kinet. Relat. Moldels}, 1 (2008) 415-435.

\bibitem{Ja}P.E. Jabin, Macroscopic limit of Vlasov type equations with friction, {\em Ann. Inst. Henri. Poincar${\rm \acute{e}}$}, 17 (2000) 651-672.

\bibitem{Ka}T.K. Karper, Hydrodynamic limit of the kinetic Cucker-Smale flocking model, {\em Math. Models Methods Appl. Sci.}, 25 (2015) 131-163.

\bibitem{Ke}P. Kelly, {\em Mechanics Lecture Notes Part III: Foundations of Continuum Mechanics}, Available on http://homepages.\\
engineering.auckland.ac.nz/pkel015/SolidMechanicsBooks/index.html

\bibitem{M}S. M${\rm\acute{e}}$l${\rm\acute{e}}$ard, Asymptotic behaviour of some interacting particle systems; Mckean-Vlasov and Boltzmann moldels, in {\em Probabilistic Models For Nonlinear Partial Differential Equations}, Lecture Notes in Mathematics Vol. 1464 (Springer, Berlin, 1996), pp. 42-95.

\bibitem{MN}J.R. Magnus \& H. Neudecker, {\em Matrix Differential Calculus with Applications in Statistics and Econometrics}, John Wiley \& Sons, New York, 2007.

\bibitem{M1}M. Freidlin, Some remarks on the Smoluchowski-Kramers approximation, {\em J. Stat. Phys.}, 117 (2004) 617-634.

\bibitem{Se}S. Serfaty, Mean field limit for Coulomb-type flows, {\em J. Duke Math.}, 169 (2020) 2887-2935.

\bibitem{SMD}J.M. Sancho, M.S. Miguel \& D. D${\rm\ddot{u}}$rr, Adiabatic elimination for systems of Brownian particles with non-constant damping coefficients, {\em J. Stat. Phys.}, 28 (2) (1982) 291-305.

\bibitem{S} A.S. Sznitman, Topics in propagation of chaos, in {\em Ecole d'$\acute{E}$t$\acute{e}$ de Probabilit$\acute{e}$s de Saint-Flour XIX-1989,} Lecture Notes in Mathematics Vol. 1464 (Springer, Berlin, 1991), pp. 165-251.

\bibitem{WLW}W. Wang, G.Y. Lv \& J.L. Wei, Small mass limit in mean field theory for stochastic $N$ particle system, {\em J. Math. Phys.}, 63 (2022) 083302.







\bibitem{B}P. Billingsley, {\em Convergence of Probability Measures}, Wiley, New York, 1999.

\bibitem{CF1}S. Cerrai \& M. Freidlin, On the Smoluchowski--Kramers approximation for a system with an infinite number of degrees of freedom, {\em Probab. Th. Rel. Fields}, 135 (2006) 363-394.

\bibitem{CF3}S. Cerrai \& M. Freidlin, Smoluchowski-Kramers approximation for a general class of SPDEs, {\em J. Evol. Equa.}, 6 (2006) 657-689.

\bibitem{CX}S. Cerrai \& M. Xie, On the small noise limit in the Smoluchowski-Kramers approximation of nonlinear wave equations with variable friction, {\em Arxiv: 2203. 05923v2}, 2022.

\bibitem{CX1}S. Cerrai \& G. Xi, A Smoluchowski-Kramers approximation for an infinite dimensional system with state-dependent damping, {\em Ann. Probab.}, 50 (2022) 874-904.

\bibitem{DW}J. Duan \& W. Wang, {\em Effective Dynamics of Stochastic Differential Equations}, Elsevier, London, 2014.

\bibitem{EK}S. Ethier \& T. Kurtz, {\em Markov Processes: Characterization and Convergence}, Wiley, New York, 1986.

\bibitem{F}M. Freidlin, Some remarks on the Smoluchowski-Kramers approximation, {\em J. Stat. Phys.}, 117 (2004) 617-634.

\bibitem{WRDH}W. Wang, J. Ren, J. Duan \& G. He, Ensemble averaging for dynamical systems under fast oscillating random boundary conditions, {\em Stoch. Anal. Appl.}, 32 (2014) 944-961.

\bibitem{LW}Y. Lv \& W. Wang, Diffusion approximation for nonlinear evolutionary equations with large interaction and fast boundary fluctuation, {\em J. Differ. Equa.}, 266 (2019) 3310-3327.

\bibitem{LW1}Y. Lv \& W. Wang, Smoluchowski-Kramers approximation with state dependent damping and highly random oscillation, {\em Discrete Contin. Dyn. Syst. Ser. B}, 28 (2023) 499-515.

\bibitem{GP}G. Pavliotis, {\em Stochastic Processes and Applications: Diffusion Processes, the Fokker-Planck and Langevin Equations}, Springer, New York, 2014.

\bibitem{HMVW}S. Hottovy, A. McDaniel, G. Volpe \& J. Wehr, The Smoluchowski-Kramers limit of stochastic differential equations with arbitrary state-dependent friction, {\em Commun. Math. Phys.}, 336 (2015) 1259-1283.

\bibitem{K}R. Kubo, The fluctuation-dissipation theorem, {\em Rep. Prog. Phys.}, 29 (1996) 255.

\bibitem{LW1}Y. Lv \& W. Wang, Limiting dynamics for stochastic wave equations, {\em J. Differ. Equa.}, 244 (2008) 1-23.

\bibitem{LW2}Y. Lv \& W. Wang, Diffusion approximation for nonlinear evolutionary equations with large interaction and fast boundary fluctuation, {\em J. Differ. Equa.}, 266 (2019) 3310-3327.

\bibitem{M}M. Metivier, {\em Stochastic Partial Differential Equations in Infinite Dimensional Spaces}, Scuola Normale Superiore, Pisa, 1988.

\bibitem{N}H. Nguyen, The small-mass limit and white-noise limit of an infinite dimensional generalized Langevin equation, {\em J. Stat. Phys.}, 173 (2018) 411-437.

\bibitem{Wa}H. Watanabe, Averaging and fluctuations for parabolic equations with rapidly oscillating random coefficients, {\em Probab. Th. Rel. Fields}, 77 (1988) 359-378.

\bibitem{WR}W. Wang \& A. Roberts, Diffusion approximation for self-similarity of stochastic advection in Burgers' Equation, {\em Commun. Math. Phys.}, 333 (2015) 1287-1316.

\bibitem{RZ}R. Zwanzig, {\em Nonequilibrium Statistical Mechanics}, Oxford University Press, 2001.

\bibitem{SW}C. Shi \& W. Wang, Small mass limit and diffusion approximation for a generalized Langevin equation with infinite number degrees of freedom, {\em J. Differ. Equa.}, 286 (2021) 645-675.

\bibitem{Zi}Y. Zine, Smoluchowski-Kramers approximation for the singular stochastic wave equations in two dimensions, {\em Arxiv: 2206. 08717v1}, 2022.





\end{thebibliography}
\end{document}